\definecolor{darkgreen}{rgb}{0,0.30,0} 
\definecolor{darkred}{rgb}{0.75,0,0}
\definecolor{darkblue}{rgb}{0,0,0.6} 
\definecolor{lightblue}{RGB}{179, 230, 255}
\def\makeautorefname#1#2{\expandafter\def\csname#1autorefname\endcsname{#2}}
\theoremstyle{definition}
\newtheorem{theorem}{Theorem}[subsection]
\newtheorem{corollary}[theorem]{Corollary}
\newtheorem{definition}[theorem]{Definition}
\newtheorem{example}[theorem]{Example}
\newtheorem{lemma}[theorem]{Lemma}
\newtheorem{notation}[theorem]{Notation}
\newtheorem{proposition}[theorem]{Proposition}
\newtheorem{remark}[theorem]{Remark}
\newtheorem{terminology}[theorem]{Terminology}
\let\c@corollary=\c@theorem
\let\c@proposition=\c@theorem
\let\c@lemma=\c@theorem
\let\c@conjecture=\c@theorem
\let\c@definition=\c@theorem
\let\c@example=\c@theorem
\let\c@remark=\c@theorem
\let\c@notation=\c@theorem
\let\c@equation\c@theorem
\author{Thomas Brazelton}
\title{Homotopy Mackey functors of equivariant algebraic $K$-theory}
\providecommand{\theauthor}{Thomas Brazelton}
\providecommand{\thetitle}{Homotopy Mackey functors of equivariant algebraic $K$-theory}
\providecommand{\Fun}{\text{Fun}}
\providecommand{\Aut}{\text{Aut}}
\providecommand{\End}{\text{End}}
\providecommand{\sh}{\text{sh}}
\providecommand{\Span}{\text{Span}}
\providecommand{\THH}{\text{THH}}
\providecommand{\Tau}{\mathcal{T}}
\providecommand{\Wald}{\text{Wald}}
\renewcommand{\Sp}{\texttt{Sp}}
\providecommand{\ExCat}{\texttt{ExCat}}
\providecommand{\TP}{\text{TP}}
\providecommand{\TR}{\text{TR}}
\providecommand{\TC}{\text{TC}}
\newcommand{\twR}[1]{R_{\theta}\left[#1\right]}
\providecommand{\Ring}{\texttt{Ring}}
\providecommand{\Ab}{\texttt{Ab}}
\let\und\underline
\renewcommand{\O}{\mathcal{O}}
\definecolor{seagreen}{RGB}{46,139,87}
\newlength{\storeparskip}
\begin{document}

\maketitle
\begin{abstract}
Given a finite group $G$ acting on a ring $R$, Merling constructed an equivariant algebraic $K$-theory $G$-spectrum, and work of Malkiewich and Merling, as well as work of Barwick, provides an interpretation of this construction as a spectral Mackey functor. This construction is powerful, but highly categorical; as a result the Mackey functors comprising the homotopy are not obvious from the construction and have therefore not yet been calculated. In this work, we provide a computation of the homotopy Mackey functors of equivariant algebraic $K$-theory in terms of a purely algebraic construction. In particular, we construct Mackey functors out of the $n$th algebraic $K$-groups of group rings whose multiplication is twisted by the group action. Restrictions and transfers for these functors admit a tractable algebraic description in that they arise from restriction and extension of scalars along module categories of twisted group rings. In the case where the group action is trivial, our construction recovers work of Dress and Kuku from the 1980's which constructs Mackey functors out of the algebraic $K$-theory of group rings. We develop many families of examples of Mackey functors, both new and old, including $K$-theory of endomorphism rings, the $K$-theory of fixed subrings of Galois extensions, and (topological) Hochschild homology of twisted group rings.
\end{abstract}

\setcounter{tocdepth}{1}
\tableofcontents{}

\setlength{\parskip}{.2em}

\section{Introduction}

Algebraic $K$-theory is a theory which encodes profound algebraic invariants of rings, revealing beautiful connections and patterns. The roots of such a theory run deep throughout mathematics, and algebraic $K$-theory can now be found in almost every corner of algebra. For example, the class number formula generalizes to a statement about the torsion of algebraic $K$-groups of number fields. A complete understanding of the $K$-groups of the integers would resolve the Kummer--Vandiver conjecture, formulated over a century and a half ago at the time of writing, and which stands as one of the great unsolved problems in number theory. However, the power and versatility of algebraic $K$-theory comes at the cost of its formulation relying on sophisticated categorical machinery.

Following the development of algebraic $K$-theory, there was a surge of interest in defining an equivariant version. In the 1980's, Fiedorowicz, Hauschild, and May gave a plus construction yielding the first topological space encoding equivariant algebraic $K$-theory for rings \cite{FHM}, and Kuku, together with Dress, developed a $Q$-construction for equivariant $K$-theory for exact categories \cite{Kuku-equivar,DK-cartan,DK-convenient}. However both of these definitions only worked in the case where the rings in question were equipped with a trivial $G$-action.

The slow progress in the development of a more robust equivariant algebraic $K$-theory can be explained by the lack of equivariant homotopical tools available at the time. The recent development of equivariant infinite loop space theory by Guillou, May, Merling, and Osorno \cite{inf-loop,Segal-inf-loop} was a major step in the direction towards laying the groundwork for equivariant algebraic $K$-theory. Using these new techniques, Merling was able to build a genuine equivariant plus construction for algebraic $K$-theory for rings equipped with a group action \cite{Mona}.

While classical algebraic $K$-theory produces a $K$-theory spectrum whose homotopy groups are the algebraic $K$-groups of a ring, equivariant algebraic $K$-theory produces a genuine $G$-spectrum, whose homotopy in each degree is a strictly richer algebraic object called a \textit{Mackey functor}.

\[ \begin{tikzcd}
    \Ring\ar[rr,"K_i" above]\ar[dr,"K" below left] &  & \Ab\\
     & \Sp\ar[ur,"\pi_i" below right] & 
\end{tikzcd} \quad \begin{tikzcd}
    G\text{-}\Ring\ar[rr,"\underline{K}_i" above]\ar[dr,"\mathbf{K}_G" below left] & & \substack{\text{Mackey}\\ \text{functors}}\\
     & G\text{-}\Sp\ar[ur,"\und{\pi}_i" below right]
\end{tikzcd} \]

Mackey functors, as initially formulated by Dress \cite{Dress1,Dress2} and Green \cite{Green}, are a collection of abelian groups indexed over subgroups of $G$, equipped with restriction and transfer maps subject to various axioms (\autoref{def:Mackey-functor}). They may equivalently be defined as modules over an additive category called the \textit{Burnside category}, denoted $B_G$. Following the adage that spectra are the homotopy theorist's abelian groups, one can envision an analogue of Mackey functors which arise as modules over a spectrally enriched version of the Burnside category. Such a construction is called a \textit{spectral Mackey functor}, which have been explored in their greatest generality by Barwick \cite{SMF1} together with Glasman and Shah \cite{BGS}. A celebrated result of Guillou and May demonstrates that spectral Mackey functors are equivalent to genuine $G$-spectra \cite[Theorem~0.1]{Guillou-May}.

Thus under the Guillou--May correspondence, one would anticipate that there is a construction of equivariant algebraic $K$-theory as a spectral Mackey functor. Indeed such a construction is given by Malkiewich and Merling, providing a \textit{Waldhausen spectral Mackey functor} associated to a Waldhausen $G$-category \cite[Proposition~4.11]{MM20}, which is equivalently a spectral Mackey functor \cite[Theorem~2.18]{MM20}. This construction is closely related to that of Barwick \cite{SMF1}, together with Glasman and Shah \cite{BGS}, which provides an equivariant algebraic $K$-theory in the more general setting of Waldhausen $\infty$-categories. In this paper we will follow the setup from \cite{MM20}. The construction of the equivariant algebraic $K$-theory spectral Mackey functor is completely categorical, and in particular its homotopy has not been investigated. 

The intent of this work is to provide an explicit algebraic description of the homotopy Mackey functors of equivariant algebraic $K$-theory for a $G$-ring. If $R$ is a $G$-ring with action map $\theta: G \to \Aut(R)$, we can define a \textit{twisted/skew group ring} $R_\theta[G]$, which is the group ring $R[G]$ equipped with a multiplication which is twisted by the group action. We prove that the algebraic $K$-groups of twisted group rings assemble to form a Mackey functor.

\begin{theorem} (As~\autoref{thm:Mackey-func}) Let $R$ be a $G$-ring, where $G$ is a finite group whose order is invertible over $R$. Then for any $n\ge 0$, there is a Mackey functor $\underline{K}_n^G(R)$, whose value at $G/H$ is the algebraic $K$-theory $K_n(\twR{H})$, and whose restriction and transfer maps are given by $K_n$ applied to extension and restriction of scalar functors over the twisted group rings $\twR{H}$.
\end{theorem}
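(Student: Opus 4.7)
The plan is to verify the Mackey axioms directly by constructing $\underline{K}_n^G(R)$ as a pair of compatible covariant and contravariant functors on the orbit category of $G$, which together with the Mackey double coset relation encode the same data as an additive functor from the Burnside category to abelian groups. On objects, set $\underline{K}_n^G(R)(G/H) := K_n(\twR{H})$, extended additively to arbitrary finite $G$-sets. For each inclusion $K \leq H$, the twisted group ring $\twR{H}$ is a free right (and left) $\twR{K}$-module on a choice of coset representatives, so both restriction of scalars $\twR{H}\text{-}\Mod \to \twR{K}\text{-}\Mod$ and extension of scalars $-\otimes_{\twR{K}} \twR{H}$ are exact and preserve finitely generated projectives; applying $K_n$ produces the restriction $\Res^H_K : K_n(\twR{H}) \to K_n(\twR{K})$ and the transfer $\Tr^H_K : K_n(\twR{K}) \to K_n(\twR{H})$. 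The identity $g r h g^{-1} = \theta_g(r) \cdot ghg^{-1}$ inside $\twR{G}$ furnishes, for each $g \in G$, a ring isomorphism $c_g : \twR{H} \xrightarrow{\sim} \twR{gHg^{-1}}$, supplying the conjugation action.

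The heart of the proof is the Mackey double coset formula
\[
\Res^H_K \circ \Tr^H_L \;=\; \sum_{g \in K \backslash H / L} \Tr^K_{K \cap gLg^{-1}} \circ c_g \circ \Res^L_{g^{-1}Kg \cap L},
\]
which I would deduce from a natural decomposition of $\twR{H}$ as a $(\twR{K}, \twR{L})$-bimodule,
\[
\twR{H} \;\cong\; \bigoplus_{g \in K \backslash H / L} \twR{K} \otimes_{\twR{K \cap gLg^{-1}}} {}_{c_g}\twR{L},
\]
the twisted analogue of the classical Mackey decomposition of group algebras. Tensoring this decomposition with a finitely generated projective $\twR{L}$-module and applying the additivity theorem for $K$-theory extracts the desired identity at the level of $K_n$. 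Functoriality of $\Res$ and $\Tr$ separately, together with compatibility with conjugation, follows from routine manipulations of tensor products and composition of ring homomorphisms.

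The main obstacle will be the careful bookkeeping of the twist $\theta$ when writing down and verifying the bimodule decomposition above. In the untwisted case of Dress and Kuku, the decomposition reduces to the purely combinatorial splitting $\Z[H] = \bigoplus_g \Z[K] g \Z[L]$, but in the twisted setting one must track how $\theta$ acts when an element crosses a coset representative and confirm that $c_g$ is the correct ring isomorphism intertwining the two twisted subrings; in particular the middle ring $\twR{K \cap gLg^{-1}}$ must receive the correct induced twist. Once this algebraic bookkeeping is in place, the hypothesis that $|G|$ is invertible in $R$ plays no role in the Mackey functor structure per se; it will enter only later, when comparing $\underline{K}_n^G(R)$ with $\und{\pi}_n$ of Merling's equivariant $K$-theory spectrum.
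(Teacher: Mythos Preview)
Your proposal is correct and follows essentially the same route as the paper: the paper also reduces the Mackey decomposition formula to a $(\twR{J},\twR{K})$-bimodule isomorphism for $\twR{H}$ (your displayed decomposition), then applies $K_n$ and additivity. The only cosmetic difference is that the paper phrases the final step via the Eilenberg--Watts theorem (two right-exact coproduct-preserving functors agreeing on the regular bimodule are naturally isomorphic) rather than by directly tensoring the bimodule decomposition with a projective, but these are the same argument. Your observation that the hypothesis $|G|^{-1}\in R$ is not used in verifying the Mackey axioms themselves is also consistent with the paper's internal logic; the hypothesis enters only in the comparison with Merling's construction.
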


In particular in the case where the group action is trivial, we recover Mackey functors of algebraic $K$-theory of group rings as built by Dress and Kuku (\autoref{prop:comparison-Kuku}).

As the construction of the Mackey functors $\underline{K}_n^G(R)$ follow by defining transfer and restrictions, applying the $K$-theory spectrum, then taking homotopy groups, we may see that each $\underline{K}_n^G(R)$ is actually the $n$th homotopy of a broader construction, namely a spectral Mackey functor $\mathbf{K}_G(R)$.

\begin{theorem} (As~\autoref{thm:equivalence-Mackey-functors}) Let $G$ be a finite group, and $R$ a $G$-ring where $|G|^{-1}\in R$. Then the Mackey functors $\underline{K}_n^G(R)$ comprise the homotopy of a spectral Mackey functor $\mathbf{K}_G(R)$, which is equivalent to equivariant algebraic $K$-theory of $R$, as defined in \cite{MM}.
\end{theorem}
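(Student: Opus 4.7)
The plan is to construct $\mathbf{K}_G(R)$ by lifting the assignment that produces $\underline{K}_n^G(R)$ from the level of abelian groups to the level of spectra, and then to identify the resulting spectral Mackey functor with Merling's equivariant algebraic $K$-theory spectrum. In the proof of \autoref{thm:Mackey-func}, the Mackey functor $\underline{K}_n^G(R): B_G^{\op} \to \Ab$ is built by sending $G/H$ to $K_n(R_\theta[H])$ and inducing restrictions and transfers from restriction and extension of scalars along the inclusions $R_\theta[H'] \hookrightarrow R_\theta[H]$. Replacing $K_n = \pi_n \circ K$ with the Waldhausen $K$-theory spectrum functor itself yields a spectral Mackey functor $\mathbf{K}_G(R): B_G^{\op} \to \Sp$. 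Since homotopy groups of a spectral Mackey functor are computed objectwise, we obtain $\pi_n \mathbf{K}_G(R) \cong \underline{K}_n^G(R)$ immediately from the construction.

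The substantive task is to identify $\mathbf{K}_G(R)$ with Merling's equivariant algebraic $K$-theory spectrum for $R$. Under the Guillou--May equivalence, together with the Waldhausen spectral Mackey functor construction of Malkiewich--Merling \cite{MM20}, Merling's spectrum corresponds to the spectral Mackey functor whose value at $G/H$ is the $K$-theory spectrum of the Waldhausen category of $H$-equivariant perfect $R$-modules, with restrictions and transfers determined by the underlying Waldhausen $G$-category structure. The key algebraic input, where the hypothesis $|G|^{-1} \in R$ enters, is a Morita-style equivalence of Waldhausen categories between $H$-equivariant perfect $R$-modules and perfect modules over the twisted group ring $R_\theta[H]$: a perfect $R$-module with a compatible semi-linear $H$-action is precisely a perfect $R_\theta[H]$-module, and invertibility of $|H|$ is what ensures that the relevant adjoint functors preserve perfectness and that the two Waldhausen structures agree.

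The main obstacle is then to verify, under this categorical equivalence, that the restriction and transfer functors on the Merling side correspond exactly to restriction and extension of scalars along the inclusions $R_\theta[H'] \hookrightarrow R_\theta[H]$. This requires careful bookkeeping of the semi-linear equivariant structure through the Morita equivalence and through the adjoint pair of functors underlying the transfer, as well as checking compatibility with composition so that the double-coset formulas on both sides match. Once these compatibilities are established on the categorical level, functoriality of Waldhausen $K$-theory promotes them to a natural equivalence of spectral Mackey functors, which under Guillou--May translates to an equivalence of genuine $G$-spectra, completing the proof.
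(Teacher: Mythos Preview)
Your outline identifies the right ingredients---the equivalence between homotopy fixed points $\mathcal{P}(R)^{hH}$ and $\mathcal{P}(R_\theta[H])$, and the need to match restriction and transfer across that equivalence---but there is a logical gap in the first paragraph. You assert that replacing $K_n$ by the $K$-theory spectrum functor ``yields a spectral Mackey functor $\mathbf{K}_G(R)$,'' and only afterwards compare with Merling's construction. This step is not free: at the category level the Mackey axioms for $\Tr$, $\Res$, $\gamma^g_!$ hold only up to natural isomorphism, so after applying $K$ the relations hold only up to homotopy. That data alone does not assemble into a spectrally enriched functor out of $\mathcal{B}_G$ (or $\mathcal{B}_G^{\Wald}$); one would need higher coherences you have not supplied. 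Note also that you wrote the target as $B_G^{\op}\to\Sp$, but a spectral Mackey functor is a functor out of the spectral Burnside category $\mathcal{B}_G$, not the additive one.

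The paper resolves this by reversing the order of your two steps. It does \emph{not} first prove $G/H\mapsto K(R_\theta[H])$ is a spectral Mackey functor and then compare; rather, the comparison is the mechanism that \emph{establishes} the spectral Mackey structure. Concretely, Malkiewich--Merling already showed that their restriction and transfer functors on $\Fun(G/H\times\mathcal{E}G,\mathcal{P}(R))^G$ determine a $\mathcal{B}_G^{\Wald}$-module. The paper then checks (\autoref{lem:res-functors-compatible}, \autoref{lem:conjugation}, \autoref{lem:tr-functors-compatible}) that under the equivalences $\Fun(G/H\times\mathcal{E}G,\mathcal{P}(R))^G\simeq\mathcal{P}(R_\theta[H])$ these functors match $\Res$, $\gamma^g_!$, and $\Tr$ up to natural isomorphism---the transfer case by an adjoint-uniqueness argument (\autoref{prop:square-proposition}) rather than direct bookkeeping. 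The $\mathcal{B}_G^{\Wald}$-module structure is then \emph{transported} along these equivalences, and the equivalence of spectral Mackey functors is immediate. In particular, there is no need to re-verify compositions or the double-coset formula on the twisted-group-ring side at the spectrum level; that coherence is inherited wholesale from the Malkiewich--Merling construction.
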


This theorem allows us to conclude that $\underline{K}_n^G(R)$ is the $n$th homotopy Mackey functor $\underline{\pi}_n$ of equivariant algebraic $K$-theory.

\begin{remark} In their recent book, Balmer and Dell'Ambrogio developed a detailed theory of \textit{Mackey 2-functors}, which are an axiomatization of the idea of Mackey functors ``valued'' in additive categories rather than abelian groups \cite{Balmer}. The work laid out in \autoref{sec:Mackey-functor-tw-group-rings} can be thought of as a new example of a Mackey 2-functor in their framework, namely categories of finitely generated projective modules over twisted group rings. Post-composition with any suitable functor from additive categories to abelian groups allows one to recover a classical Mackey functor out of a Mackey 2-functor; in particular this holds for $K_n$. In \autoref{sec:examples}, we use this general idea to produce broad families of Mackey functors, however our specific functors are chosen to factor through connective spectra.
\end{remark}

In \autoref{sec:preliminaries}, we establish some categorical and algebraic background which will be needed for the paper. We describe the orbit category and the Burnside category for a group $G$, and discuss the categorical and axiomatic definitions of a Mackey functor. We define group actions on categories and their homotopy fixed points, culminating in the computation of the homotopy fixed points of a module category, arising from a group action on a ring, as being the module category of the twisted group ring. Finally, we define equivariant algebraic $K$-theory as a Waldhausen spectral Mackey functor.

In \autoref{sec:Mackey-functor-tw-group-rings} we build various ring homomorphisms between twisted group rings, and use these to construct restriction and extension of scalar functors, which will induce restriction and transfer for our family of Mackey functors $\underline{K}_n^G(R)$. We verify that all the axioms for a Mackey functor hold, modulo some work defining and comparing right actions on certain modules, which is deferred until \autoref{sec:bimodule}. This section proves \autoref{thm:Mackey-func}, that algebraic $K$-groups of twisted group rings form a Mackey functor. We compare these Mackey functors to those constructed by Dress and Kuku in \autoref{subsec:comparison-Kuku}.

In \autoref{sec:comparison}, we compare our restriction, transfer, and conjugation data to that of Malkiewich and Merling, and prove that they agree up to natural isomorphism. In particular this allows us to prove \autoref{thm:equivalence-Mackey-functors}; that the transfer data we defined turns the $K$-theory of twisted group rings into a spectral Mackey functor, which is isomorphic to equivariant algebraic $K$-theory. In \autoref{subsec:mf-from-smf} we explain how to recover the homotopy Mackey functors from a spectral Mackey functor, following work of Bohmann and Osorno. This allows us to conclude that $\underline{K}_n^G(R)$ is the $n$th homotopy Mackey functor of equivariant algebraic $K$-theory.

Finally, in \autoref{sec:examples}, we demonstrate that our work generates various families of Mackey functors. We approach the requirements used to prove \autoref{thm:Mackey-func} from a more axiomatic lens, and generate a Mackey functor on twisted group rings for any suitable invariant in \autoref{subsec:MF-additive-invariants}. As examples, we observe that Hochschild homology, topological Hochschild homology, topological cyclic homology, topological restriction homology, and topological periodic homology all yield Mackey functors in each degree when evaluated on twisted group rings. In \autoref{subsec:MF-Galois}, we recover the classical example that $G/H \mapsto K_n(L^H)$ is a Mackey functor, where $G$ is the Galois group of a Galois extension of fields $L/k$, and we extend this result to hold for Galois ring extensions and for suitable invariants such as $\THH$. In \autoref{subsec:MF-endo-rings}, we use a classical theorem of Auslander to prove that the algebraic $K$-groups of endomorphism rings over fixed subrings $K_n(\End_{R^H}(R))$ form a Mackey functor under certain technical conditions.

\subsection{Acknowledgements}
We would like to thank Mona Merling for guidance throughout this paper, and for inspiring a love of all things equivariant. We would also like to thank Kirsten Wickelgren for being a constant source of mathematical support. Thank you also to Cary Malkiewich for helpful correspondence about homotopy invariants. The author is supported by an NSF Graduate Research Fellowship (DGE-1845298).

\section{Preliminaries}\label{sec:preliminaries}

\subsection{The orbit and Burnside categories}
Denote by $\mathcal{F}_G$ the category whose objects are finite sets equipped with a group action by $G$, and whose morphisms are equivariant functions. It is a general fact that finite $G$-sets decompose into a disjoint union over their orbits, on which $G$ acts transitively. Moreover, any transitive $G$-set can be seen to be of the form $G/H$ for some subgroup $H \subseteq G$. To that end, we define the \textit{orbit category}, denoted $\O_G$, to be the full subcategory of $\mathcal{F}_G$ on the $G$-sets of the form $G/H$. One may easily see that any morphism $G/H \to G/K$ in the orbit category is determined by where it sends the identity coset, and in particular such a morphism exists if and only if $H$ is subconjugate to $K$, meaning that there is some $g\in G$ so that $gHg^{-1} \subseteq K$. As we will be working with conjugation frequently, we incorporate some standard notation.

\begin{notation} Given any subgroup $H \subseteq G$, and $g\in G$, one denotes by ${}^{g}H := gHg^{-1}$ the conjugation of the subgroup by $g$, and $H^g = g^{-1} H g$ conjugation by its inverse.
\end{notation}

There are two natural classes of morphisms in the orbit category one considers. The first is a \textit{projection morphism}: for subgroups $H \subseteq K$, there is a map $G/H \to G/K$ sending $xH$ to $xK$. The second is a \textit{conjugation morphism}: for any $g\in G$ there is a map $G/H \to G/{}^{g}H$ given by sending $xH$ to $(xg^{-1}){}^{g}H$. One may check that these morphisms generate all morphisms in the orbit category, in the sense that any morphism in $\O_G$ can be written as a conjugation morphism followed by a projection morphism, or vice versa.

Given the category $\mathcal{F}_G$ of finite $G$-sets, we can consider its span category $\Span(\mathcal{F}_G)$, where a morphism from $S$ to $T$ is an isomorphism class of a span of the form $S \from U \to T$, and composition of spans is given by pullback.
We remark that hom sets in $\Span(\mathcal{F}_G)$ are equipped with a binary operation; given $S \from U_1 \to T$ and $S \from U_2 \to T$, we can form the span $S \from U_1 \amalg U_2 \to T$ given by taking the disjoint union of $U_1$ and $U_2$. This operation endows each hom-set with an abelian monoid structure, which we may then group complete.

\begin{definition}\label{def:Burnside-cat} The \textit{Burnside category} $B_G$ is the category whose objects are finite $G$-sets, and whose morphisms are given by
\begin{align*}
    \Hom_{B_G}(S,T) := K \left( \Span(S,T), \amalg \right),
\end{align*}
for $S,T\in \mathcal{F}_G$, where $K$ denotes group completion.
\end{definition}

In particular, suppose that $G/H \to G/K$ is a morphism in $\O_G$. Then we can view it as a span in two natural ways, namely as $G/H \xfrom{\id} G/H \to G/K$, or as $G/K \from G/H \xto{\id} G/H$. This defines two inclusion functors $\O_G \hookto B_G$ and $\O_G^\op \hookto B_G$.

The Burnside category is \textit{pre-additive}, meaning that it is enriched in the category of abelian groups. Recall that a group can be thought of as a category with a single object, whose group structure is encoded by composition of morphisms. Analogously, we may think of a ring as a pre-additive category with a single object, as its hom-set will come equipped with composition and addition, with the caveat that composition distributes over addition. Just as groupoids generalize groups, pre-additive categories generalize rings. In the literature, pre-additive categories are sometimes referred to as \textit{ringoids} or as \textit{rings with several objects}.

A module $M$ over a ring $R$ is determined by the data of a ring homomorphism $R \to \End_\Ab(M)$, which is the same as an additive functor from $R$, viewed as a category with one object, to $\Ab$. We can generalize this definition as follows.

\begin{definition}\label{def:module-over-ringoid} Let $\mathcal{A}$ be a pre-additive category. Then we define a \textit{module over} $\mathcal{A}$ to be an additive functor $\mathcal{A} \to \Ab$.
\end{definition}

With this in mind, we can provide a categorical definition of a Mackey functor.

\begin{definition}\label{def:Mackey-functor-as-module} A \textit{Mackey functor} is a module over the Burnside category, in the sense of \autoref{def:module-over-ringoid}.
\end{definition}

As a Mackey functor is an additive functor, it will preserve disjoint unions. Thus we see that a Mackey functor is completely determined on objects by where it sends transitive $G$-sets. Moreover, we may verify that a Mackey functor is completely determined by the composite functors $\O_G \hookto B_G \to \Ab$ and $\O_G^\op \hookto B_G \to \Ab$. In particular, in order to define the data of a Mackey functor, we must determine where it sends objects, projection morphisms in $\O_G$, and conjugation morphisms in $\O_G$ under both composites, and then verify that certain conditions hold. The images of projection morphisms under these inclusions will be called \textit{restriction} and \textit{transfer}, while the image of conjugation morphisms will still be referred to as \textit{conjugation}. With this data in mind, we may state an axiomatic definition of Mackey functors, equivalent to the one above. This definition may be found in \cite[\S2]{Webb}.

\begin{definition}\label{def:Mackey-functor} A \textit{Mackey functor} is a function
\begin{align*}
    \und{M}: \ob \O_G &\to \Ab
\end{align*}
together with morphisms for all $H \subseteq K \subseteq G$ subgroups and $g\in G$, called \textit{transfer}, \textit{restriction}, and \textit{conjugation}, respectively:
\begin{align*}
    I_K^H : \und{M}(G/K) &\to \und{M}(G/H) \\
    R_K^H : \und{M}(G/H) &\to \und{M}(G/K) \\
    c_g: \und{M}(G/H) &\to \und{M}(G/{}^{g}H)
\end{align*}
satisfying the following axioms for all subgroups $J, K, H \subseteq G$ and $h\in H$
\begin{enumerate}[leftmargin=0.7in] 
    \item[\textbf{MF0}] $I_H^H$, $R_H^H$, and $c_h$ are the identity on $\und{M}(G/H)$. 
    \item[\textbf{MF1}] $R_J^K R_K^H = R_J^H$ for all $J \subseteq K \subseteq H$

    \item[\textbf{MF2}] $I_K^H I_J^K = I_J^H$ for all $J \subseteq K \subseteq H$.

    \item[\textbf{MF3}] $c_g c_h = c_{gh}$.
    \item[\textbf{MF4}] $R_{{}^{g}K}^{{}^{g}H} c_g = c_g R_K^H$ for all $K \subseteq H$
    
    \item[\textbf{MF5}] $I_{{}^{g}K}^{{}^{g}H} c_g = c_g I_K^H$ for all $K \subseteq H$.
    \item[\textbf{MF6}] \textit{(Mackey decomposition formula)} For $J,K \subseteq H$ subgroups of $G$, we have that
    \begin{align*}
        R_J^H I_K^H = \sum_{x\in [J\backslash H/K]} I_{J\cap {}^{x}K}^J c_x R_{J^x \cap K}^K.
    \end{align*}
\end{enumerate}
\end{definition}

Mackey functors are rich algebraic objects, which are ubiquitous in algebra. For a more thorough account, we refer the reader to the survey paper of Webb \cite{Webb}. We will revisit this discussion when defining spectral Mackey functors, but we first take a detour to define homotopy fixed points for $G$-categories.

\subsection{$G$-categories and homotopy fixed points}
We define a \textit{$G$-category} to be any functor $G \to \Cat$, that is, it is a 1-category $\mathscr{C}$ equipped with a collection of endofunctors $g: \mathscr{C} \to \mathscr{C}$ for each group element, which we think of as inducing an action on the objects and morphisms of $\mathscr{C}$. We refer to a $G$\textit{-functor} as a natural transformation between functors $G \to \Cat$, equivalently it is a functor of 1-categories which is equivariant in the sense that it preserves the action on objects and morphisms. We say two $G$-categories are $G$\textit{-equivalent} if there is an equivariant functor between them which is also an equivalence of categories.

\begin{example}\label{ex:G-set-discrete-cat} Given any finite $G$-set $X$, we can consider it as a $G$-category in two natural ways. The first is by viewing it as a discrete category, which we also denote by $X$, whose objects are points of $X$ and which only has identity morphisms. Another category we denote by $\und{X}$, and refer to as the \textit{translation category}. This has as objects elements $x\in X$, and a morphism $x\to y$ for any $g\in G$ such that $g\cdot x = y$. Any equivariant function between $G$-sets induces a $G$-functor between the associated discrete categories or the associated translation categories in a natural way.
\end{example}

\begin{definition} Let $\mathcal{E}G$ be the category whose objects are the elements in $G$, and which has exactly one morphism in each hom-set. This comes equipped with an action of $G$, acting as $g\cdot c$ on objects, and on morphisms by $g\cdot \left( c\to c' \right) := gc \to gc'$. We observe that $\mathcal{E}G$ and $\underline{G}$ are $G$-isomorphic, thus we will refer to $\mathcal{E}G$ as the \textit{translation category} of the group $G$ without ambiguity.
\end{definition}

\begin{remark}\label{rmk:EH-EG} For $H \subseteq G$ a subgroup, there is a natural inclusion of translation categories $\mathcal{E}H \to \mathcal{E}G$. This is an equivalence of categories, however constructing an inverse can not be done canonically in general. Writing $H\backslash G = \left\{ Ha_1, \ldots, Ha_n \right\}$, we can define a functor in the other direction $\mathcal{E}G \to \mathcal{E}H$ as follows: any $g\in G$ lies in a unique right coset $Ha_i$, so $g = ha_i$ for some $h$, and we simply define $\mathcal{E}G \to \mathcal{E}H$ to send $g \mapsto h$.
\end{remark}

We can also endow the functor category between two $G$-categories with an action.

\begin{definition}\label{def:G-action-on-functors} For two $G$-functors $\mathscr{C}$ and $\mathscr{D}$, we define an action of $G$ on the objects of $\Fun(\mathscr{C},\mathscr{D})$ by
\begin{align*}
    (gF)(c) := g\cdot \left( F(g^{-1}c) \right).
\end{align*}
Thus the $G$-fixed points $\Fun(\mathscr{C},\mathscr{D})^G$ are precisely the $G$-functors. We define an action on the morphisms (natural transformations) $\eta: F \Rightarrow F'$ by
\begin{align*}
    (g\eta)_c := g\cdot \left( \eta_{g^{-1}c} \right),
\end{align*}
which is a morphism $gF(g^{-1}c) \to gF'(g^{-1}c)$. A $G$\textit{-equivariant natural transformation} is one fixed under this action of $G$.
\end{definition}

\begin{definition}\label{def:htpy-fixed-pts} Given a $G$-category $\mathscr{C}$, and a subgroup $H \subseteq G$, we define the \textit{homotopy fixed point category} as
\begin{align*}
    \mathscr{C}^{hH} := \Fun(\mathcal{E}H, \mathscr{C})^H.
\end{align*}
\end{definition}

\begin{remark} There are two definitions of homotopy fixed points for a $G$-category that one might find in the literature, namely $\mathscr{C}^{hH}$ could mean either $\Fun(\mathcal{E}H, \mathscr{C})^H$ or $\Fun(\mathcal{E}G \times G/H, \mathscr{C})^H$, where $G/H$ is the discrete category on the $G$-set $G/H$ in the sense of \autoref{ex:G-set-discrete-cat}. These two categories are easily seen to be $G$-equivalent, however we will be a bit pedantic about this point, particularly since this equivalence does not admit a canonical inverse.
\end{remark}

\begin{proposition}\label{prop:comparison-htpy-fixed-point-definitions} Let $\mathscr{C}$ be a $G$-category, and $H \subseteq G$ a subgroup. Then there is an equivalence of categories
\begin{align*}
    \Fun(G/H \times \mathcal{E}G, \mathscr{C})^G &\to \Fun(\mathcal{E}H, \mathscr{C})^H,
\end{align*}
given by sending a functor $F$ to $F(eH,-)$, and by sending a natural transformation $\eta: F \Rightarrow \eta'$ to the restricted components:
\begin{align*}
    \eta_{(eH,-)} : F(eH,-) \Rightarrow F'(eH).
\end{align*}
\end{proposition}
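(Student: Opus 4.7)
The plan is to factor the claimed functor as a composite of two equivalences,
\[ \Fun(G/H \times \mathcal{E}G, \mathscr{C})^G \xrightarrow{\sim} \Fun(\mathcal{E}G, \mathscr{C})^H \xrightarrow{\sim} \Fun(\mathcal{E}H, \mathscr{C})^H, \]
where the first arrow is evaluation at $eH$ in the first coordinate, and the second is restriction along the inclusion $\iota \colon \mathcal{E}H \hookrightarrow \mathcal{E}G$ from \autoref{rmk:EH-EG}. On objects this composite sends $F$ to $F(eH,-)$ restricted to $\mathcal{E}H$, and on morphisms it sends $\eta$ to $\eta_{(eH,-)}$ restricted to $\mathcal{E}H$, reproducing the formula in the statement.

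For the first arrow, because $G/H$ is discrete, a functor $F \colon G/H \times \mathcal{E}G \to \mathscr{C}$ consists of the data of the functors $F_{xH} := F(xH, -) \colon \mathcal{E}G \to \mathscr{C}$ for $xH \in G/H$. Under the action of \autoref{def:G-action-on-functors}, $G$-equivariance of $F$ is equivalent to the identity $F_{g\cdot xH} = g \cdot F_{xH}$ in $\Fun(\mathcal{E}G, \mathscr{C})$. Taking $x = e$ forces $F_{eH}$ to be $H$-fixed, and the same identity recovers every other $F_{aH}$ from $F_{eH}$ by $F_{aH} = a \cdot F_{eH}$. Conversely, given $\Psi \in \Fun(\mathcal{E}G, \mathscr{C})^H$, the formula $\tilde F(aH, y) := a \cdot \Psi(a^{-1} y)$ defines a $G$-equivariant functor: independence of the representative $a$ of $aH$ is exactly the condition that $\Psi$ is $H$-fixed, and $G$-equivariance follows directly from the formula. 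A short calculation shows these assignments are mutually inverse, so this step is an isomorphism of categories.

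For the second arrow, $\iota$ is $H$-equivariant, so restriction along it is $H$-equivariant. Choosing coset representatives $\{e = a_1, \ldots, a_n\}$ for $H\backslash G$ as in \autoref{rmk:EH-EG} yields an inverse $p \colon \mathcal{E}G \to \mathcal{E}H$ sending $g = h a_i$ to $h$; this $p$ is $H$-equivariant and satisfies $p \circ \iota = \id_{\mathcal{E}H}$ strictly. Since any two objects in $\mathcal{E}G$ admit a unique morphism between them, the natural isomorphism $\iota \circ p \Rightarrow \id_{\mathcal{E}G}$ is uniquely determined and its $H$-equivariance is automatic. Hence precomposition with $p$ gives an $H$-equivariant pseudo-inverse to precomposition with $\iota$ on functor categories, which descends to mutually pseudo-inverse equivalences on strict $H$-fixed points.

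I expect the main bookkeeping obstacle to be in this second step: ensuring that an equivalence of $H$-categories whose inverse is only strict on one side still induces an equivalence on the strict $H$-fixed point categories. This hinges on being able to choose the unit and counit of the adjoint equivalence $H$-equivariantly, which is automatic here because every hom-set in $\mathcal{E}G$ is a singleton, so any candidate naturality squares and equivariance squares commute vacuously.
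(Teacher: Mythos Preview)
Your proposal is correct. The paper in fact states this proposition without proof, and the companion \autoref{prop:invs-to-categorical-equivalence-htpy-fixed-pts} merely records the inverse functor (also without proof), so there is no argument in the paper to compare against directly. Your factorization through the intermediate category $\Fun(\mathcal{E}G,\mathscr{C})^H$ is clean: the first step is a genuine isomorphism of categories (no coset choices needed), and all the noncanonical choices are isolated in the second step, where the contractibility of $\mathcal{E}G$ makes the required $H$-equivariant natural isomorphism automatic. Composing your two inverse functors recovers exactly the formula the paper writes down in \autoref{prop:invs-to-categorical-equivalence-htpy-fixed-pts}, so your approach both proves the proposition and explains where that formula comes from.
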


While this functor is suitably canonical, if we decided to construct a categorical inverse, we would be forced to confront the same ambiguities to defining an inverse to $\mathcal{E}H \to \mathcal{E}G$ as in \autoref{rmk:EH-EG}. Namely, we must pick explicit coset representatives.

\begin{notation} Let $F: \mathcal{E}H \to \mathscr{C}$ be an $H$-equivariant functor between $G$-categories, and let $\{g_i\}$ be a choice of right coset representatives of the subgroup $H$ in $G$. Then we denote by $\widetilde{F}$ the lift of $F$ to a $G$-equivariant functor $\widetilde{F}: \mathcal{E}G \to \mathscr{C}$ obtained by precomposing with an equivalence $\mathcal{E}G \xto{\sim} \mathcal{E}H$. If $\eta: F \Rightarrow F'$ is an $H$-natural transformation, we denote by $\widetilde{\eta}: \widetilde{F} \Rightarrow \widetilde{F'}$ its lift, defined by $\tilde{\eta}_{hg_i} := \eta_h$.
\end{notation}

\begin{proposition}\label{prop:invs-to-categorical-equivalence-htpy-fixed-pts} In the setting of \autoref{prop:comparison-htpy-fixed-point-definitions}, let $H\backslash G = \left\{ Hg_i\right\}_i$ be a choice of right coset representatives. Then there is a functor exhibiting an equivalence of categories 
\begin{align*}
    \Fun(\mathcal{E}H,\mathscr{C})^H &\to \Fun(G/H\times \mathcal{E}G, \mathscr{C})^G \\
    F &\mapsto \left[ \left( gH, g' \right)\mapsto \left( g\cdot \widetilde{F} \right)(g') \right] = \left[ \left( gH,g' \right) \mapsto g\cdot \widetilde{F}(g^{-1}g') \right],
\end{align*}
which sends $\eta: F \Rightarrow F'$ to the natural transformation whose component at $(gH,g')$ is given by $\left( g\cdot \widetilde{\eta} \right)_{g'} = g\cdot \left( \widetilde{\eta}_{g^{-1}g'} \right)$.
\end{proposition}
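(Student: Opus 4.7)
The plan is to show the proposed assignment is a well-defined $G$-equivariant functor, and then verify it is quasi-inverse to the evaluation-at-$(eH,-)$ functor of \autoref{prop:comparison-htpy-fixed-point-definitions}, which immediately forces it to be an equivalence.

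First, I would check that the formula $(gH, g') \mapsto g \cdot \widetilde{F}(g^{-1}g')$ is independent of the chosen representative $g$ of the coset $gH$. If $g$ is replaced by $gh$ for $h \in H$, then writing $g^{-1}g' = h'g_i$ in the chosen coset decomposition gives $\widetilde{F}(g^{-1}g') = F(h')$ and $\widetilde{F}((gh)^{-1}g') = \widetilde{F}(h^{-1}h' g_i) = F(h^{-1}h')$. By $H$-equivariance of $F$, $F(h^{-1}h') = h^{-1} \cdot F(h')$, so $gh \cdot \widetilde{F}((gh)^{-1}g') = g \cdot F(h') = g \cdot \widetilde{F}(g^{-1}g')$. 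The $G$-equivariance of the resulting functor is a similar symbol chase: the value at $(g_0 gH, g_0 g')$ is $g_0 g \cdot \widetilde{F}(g^{-1}g_0^{-1}g_0 g') = g_0 \cdot (g \cdot \widetilde{F}(g^{-1}g'))$. Functoriality is clear because $G/H$ is discrete and $\widetilde{F}$ is itself a functor on $\mathcal{E}G$.

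Second, for an $H$-equivariant natural transformation $\eta : F \Rightarrow F'$, I would verify by a parallel computation that the components $(gH, g') \mapsto g \cdot \widetilde{\eta}_{g^{-1}g'}$ are independent of the coset representative $g$ and assemble into a $G$-equivariant natural transformation. Preservation of identities and composition is formal.

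Third, I would compare the two composites with the functor of \autoref{prop:comparison-htpy-fixed-point-definitions}. Starting from $F$, extending, then evaluating at $(eH, -)$ yields $g' \mapsto \widetilde{F}(g')$, which restricts along $\mathcal{E}H \hookrightarrow \mathcal{E}G$ to agree with $F$ on the nose (after arranging for $e$ to be among the chosen coset representatives, which we can always do). In the reverse direction, given a $G$-equivariant functor $\phi : G/H \times \mathcal{E}G \to \mathscr{C}$, restricting gives $F = \phi(eH, -)|_{\mathcal{E}H}$ and extending produces the functor $(gH, g') \mapsto g \cdot \phi(eH, h')$, where $g^{-1}g' = h' g_i$. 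Applying $G$-equivariance of $\phi$, this equals $\phi(gH, gh')$, while the original value is $\phi(gH, g') = \phi(gH, gh' g_i)$; the two are related by $\phi$ applied to the unique morphism $gh' \to gh'g_i$ in $\mathcal{E}G$, and these morphisms assemble into the required natural isomorphism.

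The main obstacle is the bookkeeping around the non-canonical lift $\widetilde{(-)}$, which depends on the choice of coset representatives; the key simplification making everything collapse cleanly is that $\mathcal{E}G$ has a unique morphism between every pair of objects, so the natural isomorphism constructed in the final step is automatic and its naturality is free.
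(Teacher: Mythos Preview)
The paper does not actually provide a proof of this proposition; it is stated and immediately followed by the next subsection, so the verification is implicitly left to the reader. Your proposal supplies exactly the routine checks one would expect: well-definedness on cosets via $H$-equivariance of $F$, $G$-equivariance of the resulting functor, and identification of the two composites with the identity up to a natural isomorphism built from the unique morphisms in $\mathcal{E}G$. These steps are correct, and your observation that the contractibility of $\mathcal{E}G$ makes the final natural isomorphism automatic is precisely the point. There is nothing further to compare.
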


\subsection{Homotopy fixed points of module categories and twisted group rings}

As a particular example of the discussion above, we will look at a group action on the category of modules over a ring arising from a group action on a ring. In this setting, the homotopy fixed points admit a nice description as module categories over a \textit{twisted group ring}, defined as follows.

\begin{definition}\label{def:tw-grp-rng} Given a group action $\theta: G \to \Aut_\Ring(R)$, one may define the \textit{twisted group ring} $R_\theta[G]$, whose elements are finite formal sums of the form $\sum_i r_i g_i$ with $r_i \in R$ and $g_i \in G$, and whose multiplication is defined by
\begin{align*}
    \left( \sum_i r_i g_i \right)\cdot \left( \sum_j r'_j g'_j \right) = \sum_{i,j} r_i \theta_{g_i}(r'_j) g_i g'_j.
\end{align*}
\end{definition}

Our goal in this section will be to see that, for a group action of $G$ on $R$, we have an equivalence of categories $\Mod(R)^{hH} \simeq \Mod(R_\theta[H])$ for any subgroup $H \subseteq G$. First we must see how an action of $G$ on $R$ induces an action of $G$ on the module category $\Mod(R)$. 

\begin{definition}\label{def:action-on-module-cat} Suppose that $M$ is an $R$-module, and $\theta: G \to \Aut_\Ring(R)$ is a group action. Then we define $gM$ to be equal to $M$ as an abelian group, but equipped with a twisted action map
\begin{align*}
    R \times M \xto{\theta_g \times \id} R \times M \xto{\times }M.
\end{align*}
Given $f: M \to N$ an $R$-module homomorphism, we denote by $gf: gM \to gN$ the morphism $(gf)(m) := f(m)$, that is, it is the same underlying abelian group homomorphism. We may verify that this data specifies a $G$-action on $\Mod(R)$, and we refer to this as the action of $G$ on $\Mod(R)$ \textit{induced by} $\theta$.
\end{definition}

We now look closer at the homotopy fixed point category $\Mod(R)^{hG}$. Recall that an element of the homotopy fixed point category is a $G$-equivariant functor $f: \mathcal{E}G \to \Mod(R)$. As $f(g) = g\cdot f(e)$, we see that it suffices to determine $f$ on objects by specifying the module $M$ mapped to by the identity object $e\in \mathcal{E}G$. As $\mathcal{E}G$ is a groupoid, $f$ determines $R$-module isomorphisms $f(g) : gM \xto{\sim} M$, where $gM$ is as in \autoref{def:action-on-module-cat}.

As a brief point, we have that $f(g) : gM \xto{\sim} M$ is an isomorphism of $R$-modules, however $gM$ is the same abelian group as $M$ equipped with a different module structure. In particular $f(g)$ can be thought of as an abelian group automorphism of $M$ for any $g\in G$. This automorphism is \textit{not} an $R$-module automorphism, since we have that
\begin{align*}
    f(g)(rm) =  \theta_g(r)m.
\end{align*}
We refer to $f(g)$ instead as a \textit{semilinear} $R$-module automorphism of $M$.

\begin{definition} Let $R$ be a ring, and $\theta \in \Aut_{\Ring}(R)$ a ring automorphism of $R$. Then we define a $\theta$-\textit{semilinear} $R$-module homomorphism $f: M \to N$ to be a function satisfying
\begin{itemize}
    \item $f(m+m') = f(m) + f(m')$ for all $m,m' \in M$
    \item $f(rm) = \theta(r)\cdot f(m)$ for all $r\in R$, and $m\in M$.
\end{itemize}
\end{definition}

Thus we can rephrase our discussion of the homotopy fixed point category $\Mod(R)^{hG}$ to say that an element of the homotopy fixed point category is an $R$-module $M$ equipped with an action of $G$, acting via semilinear $R$-module automorphisms. This is also called a \textit{semilinear group action} of $G$ on $M$.

We now return to our comparison of $\Mod(R)^{hG}$ and $\Mod(R_\theta[G])$. It is a classical fact that modules over a twisted group ring are equivalent to modules equipped with a semilinear $G$-action. We may see this, for example, as an explicit natural bijection as \cite[Corollary~3.6]{Brazelton-semilin}, which follows from an adjunction $R/\Ring \rightleftarrows \Grp/\Aut_\Ring(R)$. This natural bijection will provide us essential surjectivity for the following equivalence of categories.

\begin{proposition}\label{prop:htpy-fixed-points-equivalent-module-cat-over-tw-grp-ring} (c.f. \cite[4.3, 4.8]{Mona}) Let $R$ be a $G$-ring. Then there is an equivalence of categories
\begin{align*}
    \Fun(\mathcal{E}G, \Mod(R))^G \xto{\sim} \Mod(\twR{G}),
\end{align*}
given by sending $F$ to the abelian group $M:= F(e)$ equipped with the $\twR{G}$-module structure
\begin{align*}
    \twR{G} &\to \End_\Ab(M) \\
    rg &\mapsto \left( M \xto{F(e,g)} M \xto{r\cdot-} M \right),
\end{align*}
and sending a natural transformation $\eta: F \Rightarrow F'$ to the $\twR{G}$-module homomorphism
\begin{align*}
    \eta_e : M \to N.
\end{align*}
\end{proposition}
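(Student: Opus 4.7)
The plan is to exhibit the prescribed assignment as a functor, then establish that it is fully faithful and essentially surjective. By the preceding analysis, a $G$-equivariant functor $F : \mathcal{E}G \to \Mod(R)$ is determined by the $R$-module $M := F(e)$ together with the family of semilinear automorphisms $\lambda_g := F(e,g)$, one for each $g \in G$, whose composition law is governed by functoriality of $F$ together with $G$-equivariance applied to the composable chain $e \to g_1 \to g_1 g_2$ in $\mathcal{E}G$. My first step would be to verify that the rule $rg \cdot m := r \cdot \lambda_g(m)$ defines an $R_\theta[G]$-module structure on $M$; this amounts to checking that the twisted multiplication $(r_1 g_1)(r_2 g_2) = r_1 \theta_{g_1}(r_2) g_1 g_2$ is respected, which follows from semilinearity of $\lambda_{g_1}$ combined with the composition identity above, after pulling the scalar $r_2$ past $\lambda_{g_1}$.

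Next, for a $G$-equivariant natural transformation $\eta : F \Rightarrow F'$, equivariance forces $\eta_g = g \cdot \eta_e$ for all $g$, so $\eta$ is entirely determined by the component $\eta_e : M \to M'$. This component is tautologically $R$-linear, and applying naturality of $\eta$ to the unique morphism $e \to g$ in $\mathcal{E}G$ produces the identity $\eta_e \circ \lambda_g = \lambda'_g \circ \eta_e$, which together with $R$-linearity shows $\eta_e(rg \cdot m) = rg \cdot \eta_e(m)$, i.e., $\eta_e$ is $R_\theta[G]$-linear. Functoriality of the assignment on morphisms is immediate from the pointwise nature of composition of natural transformations.

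Faithfulness of the functor is automatic, since $\eta$ is recovered from $\eta_e$ by equivariance. For fullness, I would take an $R_\theta[G]$-linear map $f : M \to M'$ and extend it to a natural transformation by $\eta_g := g \cdot f$ (the same underlying map of abelian groups); equivariance is tautological, and the required naturality reduces to $f \circ \lambda_g = \lambda'_g \circ f$, which is precisely the $G$-component of $R_\theta[G]$-linearity of $f$. For essential surjectivity I would appeal to the natural bijection between $R_\theta[G]$-modules and $R$-modules equipped with a compatible $\theta$-semilinear $G$-action recorded in \cite[Corollary~3.6]{Brazelton-semilin}: starting from an $R_\theta[G]$-module $N$, restrict scalars to obtain the underlying $R$-module structure and take $\lambda_g(n) := g \cdot n$; these data assemble into a $G$-equivariant functor $F : \mathcal{E}G \to \Mod(R)$ with $F(e) = N$ whose image under the assignment recovers $N$ on the nose.

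The main obstacle is purely bookkeeping: carefully aligning the conventions for the $G$-action on $\Mod(R)$, the functoriality direction in $\mathcal{E}G$, the semilinearity direction of $\lambda_g$, and the twisted multiplication in $R_\theta[G]$, so that the associativity calculation producing the $R_\theta[G]$-action comes out without stray inverses or reversed orders. Once the conventions are fixed, the verifications are routine.
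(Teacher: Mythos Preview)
Your proposal is correct and follows essentially the same route as the paper: verify that $\eta_e$ is $R_\theta[G]$-linear via naturality and equivariance, obtain faithfulness from $\eta_g = g\cdot\eta_e$, fullness by extending an $R_\theta[G]$-linear $f$ via $\eta_g := g\cdot f$, and essential surjectivity by invoking the bijection from \cite[Corollary~3.6]{Brazelton-semilin}. The only minor addition is that you explicitly check the $R_\theta[G]$-module structure on $M$ is well-defined, which the paper absorbs into the preceding discussion.
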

\begin{proof} We must first verify that $\eta_e$ is indeed an $\twR{G}$-module homomorphism as claimed. By assumption it is an $R$-module homomorphism, so in particular it is additive, thus it suffices to check that it preserves the $\twR{G}$-action. Let $M:= F(e)$, and $N:= F'(e)$. Then we must verify that $\eta_e ((rg)\cdot m) = (rg)\cdot \eta_e(m)$ for all $m \in M$. That is, that the composites
\begin{align*}
    M \xto{F(e,g)} M \xto{r\cdot-} M \xto{\eta_e} N \\
    M \xto{\eta_e} N \xto{F'(e,g)} N \xto{r\cdot-} N
\end{align*}
agree. In the second composite, we remark that we can rewrite $\eta_e F'(e,g)$ as $F(e,g) \eta_g$ by the naturality of $\eta$. Moreover, as $\eta$ is $G$-equivariant, we see that $\eta_g = g\cdot \eta_e$, and as $gM = M$ as abelian groups, we see that $\eta_g = \eta_e$ is an equality of abelian group homomorphisms. Thus it suffices to see that $r\eta_e(m) = \eta_e(rm)$, which follows immediately by the component $\eta_e$ being an $R$-module homomorphism.

Checking functoriality is immediate, since for $F \overset{\eta}{\Rightarrow} F' \overset{\epsilon}{\Rightarrow} F^{\prime\prime}$, we clearly have that $(\epsilon\circ \eta)_e = \epsilon_e \circ \eta_e$, and this can be seen to be associative. Moreover, $(\id_F)_e = \id_{F(e)}$, thus the assignment $\Fun(\mathcal{E}G, \Mod(R))^G \to \Mod_{\twR{G}}$ is indeed a functor.

If $\eta,\epsilon : F\Rightarrow F'$ are two $G$-equivariant natural transformations so that $\eta_e = \epsilon_e$ as $\twR{G}$-module homomorphisms, one can see that $\eta_e = \epsilon_e$ agree in particular as $R$-module homomorphisms. As they are equivariant, this implies that $\eta_g = g\eta_e = g\epsilon_e = \epsilon_g$ is an equality of $R$-module homomorphisms, implying that $\eta = \epsilon$. Now if $M \xto{f} N$ is any $\twR{G}$-module homomorphism, then we define functors $F,F' : \mathcal{E}G \to \Mod(R)$ given by $F(g):= gM$ and $F'(g) := gN$. We may then define a natural transformation $\eta: F \Rightarrow F'$ given by $\eta_g := g\cdot f$, which we may check is $G$-equivariant. This implies the functor $\Fun(\mathcal{E}G, \Mod(R))^G \to \Mod_{\twR{G}}$ is fully faithful.

For essential surjectivity, as we have remarked, every module over $R_\theta[G]$ arises as an $R$-module equipped with a semilinear $G$-action \cite{Brazelton-semilin}.
\end{proof}

\begin{corollary}\label{cor:descent-to-proj-modules-subcat} \cite[4.9,~4.10,~4.11]{Mona} Under the conditions of \autoref{prop:htpy-fixed-points-equivalent-module-cat-over-tw-grp-ring}, for any subgroup $H \subseteq G$, there is an equivalence of categories
\begin{align*}
    \Fun(\mathcal{E}H, \Mod(R))^H \xto{\sim} \Mod(\twR{H}).
\end{align*}
Moreover, if $|H|^{-1} \in R$, then this equivalence descends to an equivalence on the category of finitely generated projective modules
\begin{align*}
    \Fun(\mathcal{E}H, \mathcal{P}(R))^H \xto{\sim} \mathcal{P}(\twR{H}).
\end{align*}
\end{corollary}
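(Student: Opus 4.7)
The plan is to handle the two equivalences separately. The first equivalence follows immediately from \autoref{prop:htpy-fixed-points-equivalent-module-cat-over-tw-grp-ring} applied to $R$ regarded as an $H$-ring via the restricted action $\theta|_H\colon H \to \Aut_\Ring(R)$; no additional work is required.

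For the descent to finitely generated projective modules, recall that the equivalence sends $F\colon \mathcal{E}H \to \Mod(R)$ to the $\twR{H}$-module $M := F(e)$, whose underlying abelian group and $R$-module structure are preserved. Thus the claim reduces to showing that an $\twR{H}$-module $M$ is finitely generated and projective over $\twR{H}$ if and only if its underlying $R$-module is finitely generated and projective over $R$. The ``only if'' direction requires no hypothesis on $|H|$: since $\twR{H}$ is free of rank $|H|$ as an $R$-module on the basis $H$, any free $\twR{H}$-module of finite rank is free of finite rank over $R$, and both finite generation and the property of being a summand are preserved under restriction of scalars.

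The hard direction is the converse, which is exactly where the hypothesis $|H|^{-1} \in R$ enters. Finite generation over $\twR{H}$ is immediate, since any $R$-generating set of $M$ already generates $M$ over $\twR{H}$. For projectivity, I would run a Maschke-style averaging argument. Given an $\twR{H}$-linear surjection $p\colon N \twoheadrightarrow M$, choose an $R$-linear section $s\colon M \to N$ using $R$-projectivity of $M$, and set
\begin{equation*}
\tilde s(m) := \frac{1}{|H|} \sum_{h \in H} h \cdot s(h^{-1} \cdot m).
\end{equation*}
Then $p \tilde s(m) = \frac{1}{|H|} \sum_{h} h\cdot (h^{-1} m) = m$ since $p$ is $\twR{H}$-linear and $ps = \id_M$, so $\tilde s$ is a section. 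One verifies $R$-linearity using the semilinearity $h^{-1}(rm) = \theta_{h^{-1}}(r)\cdot h^{-1}m$, $R$-linearity of $s$, and the identity $\theta_h \theta_{h^{-1}} = \id_R$ when reapplying $h$; and $H$-equivariance follows by reindexing $h \mapsto h'h$ in the sum. Together these give $\twR{H}$-linearity of $\tilde s$, since the multiplication in $\twR{H}$ is generated by $R$ and $H$ subject to the twisted commutation relation.

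The main obstacle is purely bookkeeping the semilinear conventions cleanly in the averaging computation; once this is done, the proof is a direct generalization of the classical Maschke theorem.
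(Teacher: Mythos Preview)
Your argument is correct. The first equivalence is exactly as you say, and your Maschke-type averaging argument for the descent to finitely generated projectives is the right one; the verifications of $R$-linearity and $H$-equivariance of $\tilde s$ go through as you indicate once the semilinear identity $h^{-1}\cdot(rm)=\theta_{h^{-1}}(r)\cdot(h^{-1}m)$ is used.

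As for comparison: the paper does not supply its own proof of this corollary but instead cites \cite[4.9--4.11]{Mona}. Your direct argument is the standard one and is essentially what the cited reference does, so there is no substantive difference in approach to report.
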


\subsection{Waldhausen $G$-categories} Suppose we are handed a Waldhausen category $\mathscr{C}$ equipped with an action of $G$, whose $K$-theory we want to study. If we wish to study the $K$-theory in a way that sees the symmetry coming from the $G$-action, then a natural expectation would be that the group action is compatible with the Waldhausen structure in the following sense.

\begin{definition} A \textit{Waldhausen} $G$\textit{-category} is a Waldhausen category $\mathscr{C}$ with an action of $G$ by exact functors.
\end{definition}

We could now attempt to study the $K$-theory of a Waldhausen $G$-category $\mathscr{C}$ by taking the $K$-theory of the associated fixed point categories $\mathscr{C}^H$, obtaining a collection of spectra indexed over subgroups of $G$. This naive approach forces us to come to grips with the fact that ordinary fixed points are too coarse -- in general, it will not be true that $\mathscr{C}^H$ is a Waldhausen category \cite[2.1]{MM}. This failure is rectified by passing to homotopy fixed points, and is our main motivation for studying homotopy fixed points in greater detail.

\begin{theorem} \cite[Theorem~2.15]{MM} Let $\mathscr{C}$ be a $G$-Waldhausen category, and $H \subseteq G$ a subgroup. Then the homotopy fixed points $\mathscr{C}^{hH}$ is a Waldhausen category.
\end{theorem}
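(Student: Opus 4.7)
The plan is to transport the Waldhausen structure from $\mathscr{C}$ to $\mathscr{C}^{hH} = \Fun(\mathcal{E}H,\mathscr{C})^H$ in two stages: first I would place a pointwise Waldhausen structure on the entire functor category $\Fun(\mathcal{E}H,\mathscr{C})$, and then restrict it to the $H$-fixed subcategory, with the action of $H$ by exact functors being the key compatibility needed to make the restriction work.

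For the first stage, I would declare the zero object of $\Fun(\mathcal{E}H,\mathscr{C})$ to be the constant functor at the zero object of $\mathscr{C}$, and say that a natural transformation $\eta: F \Rightarrow F'$ is a cofibration (respectively a weak equivalence) exactly when every component $\eta_c: F(c) \to F'(c)$ is a cofibration (respectively a weak equivalence) in $\mathscr{C}$. Since pushouts and more general colimits in a functor category are computed pointwise, the existence of pushouts along cofibrations, their closure under cobase change, and the gluing lemma for weak equivalences all follow by applying the corresponding axioms in $\mathscr{C}$ one object of $\mathcal{E}H$ at a time; this is essentially formal and I would not dwell on it.

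For the second stage, I would recall from \autoref{def:G-action-on-functors} that $H$ acts on $\Fun(\mathcal{E}H,\mathscr{C})$ by $(hF)(c) = h\cdot F(h^{-1}c)$, and note that because $H$ acts on $\mathscr{C}$ by exact functors, this induced action on the functor category is itself by exact endofunctors with respect to the pointwise structure above. The fixed subcategory $\mathscr{C}^{hH}$ then inherits this structure by restriction: the constant zero functor is visibly $H$-equivariant, and the class of pointwise cofibrations (respectively weak equivalences) is preserved by the $H$-action because exact functors preserve cofibrations and weak equivalences componentwise. The main subtlety — and where I expect to spend real effort — is verifying that the pushout of a diagram $F'' \hookleftarrow F \to F'$ in $\mathscr{C}^{hH}$ can be formed inside $\mathscr{C}^{hH}$. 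The plan is to form the pushout $P$ pointwise in $\Fun(\mathcal{E}H,\mathscr{C})$ and then argue that exactness of the $H$-action forces $hP$ to be the pushout of $hF''\hookleftarrow hF \to hF'$; since $F, F', F''$ are already $H$-fixed, this pushout equals $P$, so $hP = P$ and $P$ lies in $\mathscr{C}^{hH}$.

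With pushouts secured inside $\mathscr{C}^{hH}$, closure of cofibrations under cobase change and the gluing lemma transfer immediately from the pointwise versions, since the comparison maps produced by these axioms in $\Fun(\mathcal{E}H,\mathscr{C})$ are built functorially from $H$-equivariant input and are therefore themselves $H$-equivariant. The entire argument thus reduces to two ingredients: the pointwise nature of colimits in a functor category, and the hypothesis that $H$ acts through exact functors; the second of these is doing the real work, as it is what guarantees that the fixed-point subcategory is closed under the constructions required by the Waldhausen axioms.
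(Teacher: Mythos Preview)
The paper does not prove this theorem; it is quoted verbatim as \cite[Theorem~2.15]{MM} and used as a black box. So there is no ``paper's own proof'' to compare against here --- your proposal is effectively a reconstruction of the argument from the cited reference rather than of anything in this paper.

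That said, your outline is the natural one and is essentially what appears in Malkiewich--Merling, with one caveat worth flagging. In the pushout step you write that ``$hP$ is the pushout of $hF'' \hookleftarrow hF \to hF'$; since $F,F',F''$ are already $H$-fixed, this pushout equals $P$, so $hP = P$.'' But pushouts are only determined up to canonical isomorphism, so what you actually get is a canonical isomorphism $hP \cong P$, not a literal equality. Since $\mathscr{C}^{hH}$ is defined here as the \emph{strict} fixed subcategory $\Fun(\mathcal{E}H,\mathscr{C})^H$, you need $P$ to be strictly $H$-fixed as a functor, and a pointwise choice of pushouts will not in general satisfy $h\cdot P(h^{-1}c) = P(c)$ on the nose. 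The standard fix is to exploit the fact that $\mathcal{E}H$ is a connected groupoid: choose the pushout once at the identity object $e$, and then transport it along the unique isomorphisms $e \to c$ to define $P(c)$ for all $c$; equivariance then holds strictly by construction. This is a small point, but as written your argument conflates ``canonically isomorphic'' with ``equal'' at exactly the place where the strict-fixed-point definition makes the distinction matter.
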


This theorem allows us to assign to every orbit $G/H$ a spectrum $K \left( \mathscr{C}^{hH} \right)$, and moreover we will see that we are able to travel between these spectra along morphisms which are directly analogous to those found in Mackey functors. This data forms a homotopical analogue of a Mackey functor, which is referred to as a \textit{spectral Mackey functor}. Thus for any group action on a ring, we will obtain a spectral Mackey functor, which encodes the $K$-theory of the associated homotopy fixed point module categories.

\subsection{Spectral Mackey functors}

One of the primary philosophies of stable homotopy theory is the analogy between abelian groups and spectra. From this perspective, we can try to replicate the definition of a Mackey functor as a module over the pre-additive Burnside category.

\begin{definition} The \textit{spectral Burnside category}, denoted $\mathcal{B}_G$, is defined to be the category whose objects are finite $G$-sets, and whose hom sets are defined by taking the $K$-theory of the permutative category of finite equivariant spans between two finite $G$-sets (see \cite{Guillou-May} for details).
\end{definition}

\begin{definition} A \textit{module} over a spectrally enriched category $\mathcal{A}$ is a spectrally enriched functor $\mathcal{A} \to \Sp$. This is the perspective of Schwede and Shipley, see for example \cite[\S3.3]{SS01}.

\end{definition}

\begin{definition} A \textit{spectral Mackey functor} or $\mathcal{B}_G$\textit{-module} is a spectrally enriched functor $\mathcal{B}_G \to \Sp$.
\end{definition}

The Guillou--May theorem establishes a powerful connection between spectral Mackey functors and genuine $G$-spectra, namely that their homotopy theories are equivalent.

\begin{theorem}\cite[Theorem~1.13]{Guillou-May} When $G$ is finite, there is a string of Quillen equivalences between $\mathcal{B}_G$-modules and genuine $G$-spectra
\begin{align*}
    \Mod(\mathcal{B}_G) \simeq G\text{-}\Sp.
\end{align*}
\end{theorem}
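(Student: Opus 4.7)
The plan is to exhibit a Morita-style equivalence induced by a canonical spectrally enriched embedding $i : \mathcal{B}_G \hookrightarrow G\text{-}\Sp$ sending a finite $G$-set $S$ to its equivariant suspension spectrum $\Sigma^\infty_G S_+$. By construction of the spectral Burnside category via an equivariant infinite loop space machine applied to the permutative category of equivariant spans, the hom-spectra in $\mathcal{B}_G$ agree with the mapping spectra $\Map_{G\text{-}\Sp}(\Sigma^\infty_G S_+, \Sigma^\infty_G T_+)$, so $i$ is fully faithful on hom-spectra. This computation is the key technical input; everything else is formal Morita theory.

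With $i$ in hand, form the adjunction
$$L : \Mod(\mathcal{B}_G) \rightleftarrows G\text{-}\Sp : R,$$
where $R(X)(S) := \Map_{G\text{-}\Sp}(\Sigma^\infty_G S_+, X)$ and $L$ is the enriched left Kan extension of $i$ along the Yoneda embedding. To show this is an equivalence, invoke two generation statements: the suspension spectra $\{\Sigma^\infty_G(G/H)_+\}_{H \le G}$ are compact generators of $G\text{-}\Sp$ as a presentable stable category (a genuine equivariant analogue of the fact that the sphere generates ordinary spectra), and symmetrically the representable modules $\mathcal{B}_G(-,G/H)$ generate $\Mod(\mathcal{B}_G)$. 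Fully faithfulness of $i$ on hom-spectra then forces both the unit and counit to be equivalences on generators, hence everywhere. This establishes the equivalence of underlying stable $\infty$-categories.

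The main obstacle is upgrading this equivalence of presentable stable $\infty$-categories to the string of honest \emph{Quillen} equivalences claimed in the statement. One must construct compatible model structures on a point-set model of $G$-spectra (e.g.\ orthogonal $G$-spectra) and on spectral presheaves over $\mathcal{B}_G$, so that $L$ and $R$ literally form a Quillen pair, and identify both sides as Bousfield localizations of a common presheaf category. The Guillou--May strategy funnels both categories through equivariant infinite loop space theory: they use an equivariant $K$-theory machine on permutative $G$-categories that produces, from a single categorical input, both a genuine $G$-spectrum and a spectral Mackey functor, and the delicate verification is that the machine correctly detects genuine equivariance in the sense that the fixed points of the output agree with the $K$-theory of the corresponding homotopy fixed point subcategories. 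Once this compatibility is in place, the equivalence of homotopy categories sketched above promotes to a zigzag of Quillen equivalences as asserted.
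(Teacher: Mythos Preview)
The paper does not prove this theorem; it is simply quoted from \cite[Theorem~1.13]{Guillou-May} without argument, so there is no proof in the paper to compare your proposal against.

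For what it is worth, your sketch captures the correct Morita-theoretic philosophy: one embeds $\mathcal{B}_G$ into $G$-spectra via $S \mapsto \Sigma^\infty_G S_+$, identifies the hom-spectra, and invokes compact generation by the orbit suspension spectra. You are also right that the hard work lies in realizing this as a zigzag of honest Quillen equivalences between point-set model categories rather than an equivalence of underlying $\infty$-categories, and that Guillou--May accomplish this through their equivariant infinite loop space machine. Your final paragraph is more of an acknowledgment of difficulty than an argument, but since the present paper only needs the statement and not the proof, nothing further is required here.
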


In establishing the theory of equivariant $A$-theory, Malkiewich and Merling construct an alternative version of the spectral Burnside category, which is more understandable for the context of Waldhausen $G$-categories.

\begin{definition} We define $\mathcal{B}_G^\Wald$ to be the spectrally enriched category whose objects are transitive $G$-sets, and whose homs are given by first denoting by $S_{H,K}$ the category of finite $G$-sets containing $G/H \times G/K$ as a retract, and then defining
\begin{align*}
    \Hom_{\mathcal{B}_G^\Wald} \left( G/H, G/K \right) := K \left( S_{H,K} \right),
\end{align*}
where $K$ denotes the Waldhausen $K$-theory spectrum \cite[4.4]{MM}.
\end{definition}

We refer to a module over $\mathcal{B}_G^\Wald$ as a \textit{Waldhausen spectral Mackey functor}. It is natural to ask whether it is substantially different to be a $\mathcal{B}_G^\Wald$-module than to be a $\mathcal{B}_G$-module. The answer is that they are equivalent.

\begin{theorem} \cite[Theorem~2.18]{MM20} There is an equivalence of spectrally enriched categories between $\mathcal{B}_G^\Wald$ and $\mathcal{B}_G$.
\end{theorem}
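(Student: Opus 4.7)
The plan is to construct an equivalence of spectrally enriched categories $\Phi: \mathcal{B}_G^\Wald \to \mathcal{B}_G$ which is the identity on objects, and then to show it is a levelwise equivalence on hom spectra that is compatible with composition. The key observation underlying the comparison is that an object of $S_{H,K}$---a finite $G$-set $X$ equipped with retraction data $G/H \times G/K \to X \to G/H \times G/K$---decomposes canonically as $X \cong (G/H \times G/K) \sqcup Y$, where $Y$ inherits a structure map $Y \to G/H \times G/K$. This latter data is precisely a span $G/H \leftarrow Y \to G/K$ of finite $G$-sets, giving an equivalence of underlying categories between $S_{H,K}$ modulo the fixed retract and the category of spans from $G/H$ to $G/K$.

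Next I would upgrade this to an equivalence of $K$-theory spectra between $K(S_{H,K})$ and the $K$-theory of the span category. The Waldhausen structure on $S_{H,K}$ has cofibrations given by summand inclusions fixing the $G/H \times G/K$ summand, and weak equivalences are isomorphisms; thus $S_{H,K}$ is a split-exact Waldhausen category in which every cofibration admits a retraction. In such a setting, Waldhausen $K$-theory agrees with the Segal $K$-theory of the underlying permutative groupoid with disjoint union as the symmetric monoidal operation, which is exactly the construction used to define the hom spectra in $\mathcal{B}_G$.

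The third step is to verify compatibility with composition, which I expect to be the main obstacle. Composition in $\mathcal{B}_G$ of spans is via pullback along the common vertex, while composition in $\mathcal{B}_G^\Wald$ is defined through an analogous Waldhausen-level construction involving the retract structure. Under the correspondence $X \mapsto Y$ sketched above, one must exhibit a coherent natural isomorphism of bi-exact functors showing that pullback of the complementary spans agrees with the induced composition on retracts. While the pointwise equivalence is straightforward to produce, assembling it into an equivalence of spectrally enriched categories demands controlling higher coherence. A clean approach is to factor $\Phi$ through an intermediate enriched category whose hom-objects are built from nerves of a shared categorical model---spans equipped with chosen basepoints over $G/H \times G/K$---from which both $\mathcal{B}_G$ and $\mathcal{B}_G^\Wald$ receive compatible enriched equivalences, reducing coherence to a diagrammatic check at the level of 1-categories.
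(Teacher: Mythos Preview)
The paper does not contain a proof of this theorem; it is stated as a citation of \cite[Theorem~2.18]{MM20} and used as a black box to deduce \autoref{cor:GB-and-GBWald-equiv}. There is therefore no proof in this paper against which to compare your proposal.

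That said, your outline is a reasonable sketch of how such a comparison ought to go, and the decomposition you identify is correct: in finite $G$-sets every retract $G/H\times G/K \hookrightarrow X \to G/H\times G/K$ splits $X$ as $(G/H\times G/K)\amalg Y$, with the retraction restricting to a map $Y\to G/H\times G/K$, i.e.\ a span. Your identification of the hom-spectra via the agreement of Waldhausen and Segal $K$-theory on split-exact Waldhausen categories is also the expected mechanism. The genuine content, as you note, is the coherence of composition, and your proposal here is more of a wish than an argument: saying one will ``factor through an intermediate enriched category'' and ``reduce coherence to a diagrammatic check'' names the difficulty without resolving it. If you want to actually carry this out, you would need to either work at the level of multifunctors between multicategories of permutative categories (which is how \cite{MM20} organizes the coherence), or else exhibit explicit bi-exact functors realizing composition on both sides together with a natural isomorphism between their images under the two $K$-theory machines, and then invoke a uniqueness-of-$K$-theory result to propagate that isomorphism to the spectral level.
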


\begin{corollary}\label{cor:GB-and-GBWald-equiv} \cite[6.1]{SS03} The module categories $\Mod(\mathcal{B}_G)$ and $\Mod(\mathcal{B}_G^\Wald)$ are Quillen equivalent.
\end{corollary}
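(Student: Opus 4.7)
The plan is to deduce this corollary as a direct consequence of the preceding theorem together with a standard result of Schwede and Shipley on module categories over spectrally enriched categories. Specifically, the previous theorem furnishes an equivalence of spectrally enriched categories $\mathcal{B}_G^\Wald \simeq \mathcal{B}_G$, so the task reduces to arguing that a suitable equivalence of spectrally enriched categories induces a Quillen equivalence on module categories (i.e. on categories of spectrally enriched functors to $\Sp$).

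First, I would recall from the cited theorem that the equivalence $\mathcal{B}_G^\Wald \simeq \mathcal{B}_G$ is not merely an equivalence of underlying categories but a \emph{Dwyer--Kan equivalence} of spectrally enriched categories: it induces a bijection on isomorphism classes of objects and a levelwise stable equivalence on mapping spectra. This is the input needed to apply the general machinery. Then I would invoke \cite[Theorem~6.1]{SS03}, which asserts precisely that any Dwyer--Kan equivalence $\mathcal{A} \to \mathcal{A}'$ of pointwise-cofibrant spectrally enriched categories induces a Quillen equivalence between the associated categories of enriched functors $\mathcal{A} \to \Sp$ and $\mathcal{A}' \to \Sp$, each equipped with the projective model structure. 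The restriction-of-scalars functor along $\mathcal{B}_G^\Wald \to \mathcal{B}_G$ (or its inverse) serves as the right Quillen functor, with extension of scalars as its left adjoint.

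The only substantive obstacle is ensuring that the hypotheses of the Schwede--Shipley result apply in our setting: one must verify pointwise cofibrancy (or more generally the conditions under which the projective model structure on enriched functor categories exists and is transported along Dwyer--Kan equivalences). In the present situation, both $\mathcal{B}_G$ and $\mathcal{B}_G^\Wald$ are constructed from $K$-theory spectra of permutative or Waldhausen categories, whose output may be modeled by cofibrant symmetric spectra via standard machinery; alternatively, one may pass to a cofibrant replacement of the enriched category without loss of generality, as the resulting module categories are Quillen equivalent. With this in place, the cited result applies verbatim and yields the desired Quillen equivalence $\Mod(\mathcal{B}_G) \simeq \Mod(\mathcal{B}_G^\Wald)$.
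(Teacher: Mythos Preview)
Your proposal is correct and matches the paper's approach exactly: the paper gives no explicit proof for this corollary, simply citing \cite[6.1]{SS03} as the justification that an equivalence of spectrally enriched categories (furnished by the preceding theorem) induces a Quillen equivalence of module categories. Your write-up fills in precisely the details the paper leaves implicit.
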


One of the major results of \cite{MM20} is that, given a $G$-Waldhausen category $\mathscr{C}$, one may endow the assignment $G/H \mapsto K \left( \mathscr{C}^{hH} \right)$ with the structure of a Waldhausen spectral Mackey functor \cite[Proposition~4.11]{MM20}. In particular, the authors define certain transfer, conjugation, and restriction maps at the level of $G$-categories, and prove that these determine the data of a $\mathcal{B}_G^\Wald$-module. By \autoref{cor:GB-and-GBWald-equiv}, this is then a spectral Mackey functor.

\begin{definition} Let $R$ be a $G$-ring. When $\mathscr{C} = \mathcal{P}(R)$ is the category of finitely generated projective $R$-modules, we refer to the spectral Mackey functor $G/H \mapsto K \left( \mathcal{P}(R)^{hH} \right)$ as \textit{equivariant algebraic} $K$\textit{-theory}.
\end{definition}

Our goal will be first to construct Mackey functors associated to the algebraic $K$-theory of twisted group rings. Following this, we will compare the restriction, transfer, and conjugation functors that we define at the level of $G$-categories to those found in \cite{MM20}, which determine the structure of a $\mathcal{B}_G^\Wald$-module. In particular by seeing that they agree up to natural isomorphism, the resulting spectral Mackey functors will be equivalent.

\section{Mackey functors on algebraic $K$-theory of twisted group rings}\label{sec:Mackey-functor-tw-group-rings}

The entirety of this section will be devoted to proving the following theorem.

\begin{theorem}\label{thm:Mackey-func} For any $G$-ring $R$, where $G$ is a finite group whose order is invertible over $R$, and for any $n\ge 0$, there is a Mackey functor $\underline{K}_n^G(R)$, whose value at $G/H$ is the algebraic $K$-group $K_n(\twR{H})$, and whose restriction and transfer maps are given by $K_n$ applied to extension and restriction of scalar functors of finitely generated projective modules over the twisted group rings $\twR{H}$.
\end{theorem}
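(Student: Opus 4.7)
The plan is to verify the axiomatic characterization of a Mackey functor given in \autoref{def:Mackey-functor}, by constructing all structure maps in three stages: producing ring-theoretic data between the twisted group rings $\twR{H}$, lifting this to extension and restriction of scalars on categories of finitely generated projective modules, and finally applying $K_n$.

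At the ring level, for every inclusion $K \subseteq H$ there is a tautological subring inclusion $\iota = \iota_K^H: \twR{K} \hookto \twR{H}$, and for every $g \in G$ and $H \subseteq G$ there is a candidate conjugation ring isomorphism $\phi_g: \twR{H} \to \twR{{}^{g}H}$ sending $rh$ to $\theta_g(r) \cdot ghg^{-1}$. A direct computation against the multiplication rule of \autoref{def:tw-grp-rng} confirms that $\phi_g$ is indeed a ring homomorphism with $\phi_g \circ \phi_h = \phi_{gh}$ and that it commutes with inclusions in the form $\phi_g \circ \iota_K^H = \iota_{{}^{g}K}^{{}^{g}H} \circ \phi_g$. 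From $\iota$ one obtains a restriction of scalars functor $\iota^*: \mathcal{P}(\twR{H}) \to \mathcal{P}(\twR{K})$ and an extension of scalars functor $\iota_! := - \otimes_{\twR{K}} \twR{H}: \mathcal{P}(\twR{K}) \to \mathcal{P}(\twR{H})$, both of which preserve finite generation and projectivity because $\twR{H}$ is free of rank $[H:K]$ as both a left and right $\twR{K}$-module on any system of coset representatives. Similarly $\phi_g$ induces an equivalence of categories $\mathcal{P}(\twR{H}) \simeq \mathcal{P}(\twR{{}^{g}H})$. I would then set $R_K^H := K_n(\iota^*)$, $I_K^H := K_n(\iota_!)$, and $c_g := K_n$ of the conjugation equivalence.

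Axioms MF0 through MF5 then reduce to compatibilities at the ring level together with functoriality of $K$-theory: MF0 is immediate, MF1 follows from $\iota_K^H \circ \iota_J^K = \iota_J^H$, MF2 from the associativity isomorphism $(-\otimes_{\twR{J}} \twR{K}) \otimes_{\twR{K}} \twR{H} \cong -\otimes_{\twR{J}} \twR{H}$, MF3 restates $\phi_g \phi_h = \phi_{gh}$, and MF4, MF5 are the two instances of $\phi_g \circ \iota_K^H = \iota_{{}^{g}K}^{{}^{g}H} \circ \phi_g$ passed through restriction and extension of scalars respectively. None of these are conceptually difficult once the ring-level identities are in place, though MF2 requires carefully identifying the right $\twR{H}$-structure on the tensor product in the twisted setting.

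The main obstacle is \textbf{MF6}, the Mackey decomposition formula. My strategy is to establish, for $J,K \subseteq H$ and a choice of double coset representatives $[J\backslash H/K] = \{x\}$, a natural isomorphism of $(\twR{J}, \twR{K})$-bimodules
\begin{align*}
    \twR{H} \;\cong\; \bigoplus_{x \in [J\backslash H/K]} \twR{J} \otimes_{\twR{J \cap {}^{x}K}} {}_x \twR{K},
\end{align*}
where ${}_x\twR{K}$ denotes $\twR{K}$ with its left $\twR{J \cap {}^{x}K}$-action twisted through the isomorphism $\phi_{x^{-1}}: \twR{J \cap {}^{x}K} \xto{\sim} \twR{J^{x} \cap K} \subseteq \twR{K}$. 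At the underlying set level this is the classical decomposition of $H$ into double cosets $JxK \cong J \times_{J \cap {}^{x}K} K$, and the main technical work is tracking how the twist $\theta$ propagates through this decomposition -- which is precisely what forces the conjugation maps $c_x$ to appear on the right hand side. Once the bimodule isomorphism is established, tensoring on the right with a finitely generated projective $\twR{K}$-module identifies the composite $R_J^H \circ I_K^H$ with the sum of composites $I_{J \cap {}^{x}K}^J \circ c_x \circ R_{J^x \cap K}^K$ at the level of module categories, whence on $K_n$ by the additivity theorem. The careful comparison of right actions that this construction requires is precisely the material deferred to \autoref{sec:bimodule}.
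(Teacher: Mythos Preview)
Your proposal is correct and follows essentially the same architecture as the paper: build ring maps between twisted group rings, pass to extension/restriction of scalars on $\mathcal{P}(\twR{H})$, verify \textbf{MF0}--\textbf{MF5} from functoriality, handle \textbf{MF6} via a bimodule decomposition, and apply $K_n$ using additivity. Your formulation of \textbf{MF6} as a direct $(\twR{J},\twR{K})$-bimodule decomposition of $\twR{H}$ is a slightly more streamlined packaging of what the paper does --- the paper instead evaluates both functors at $\twR{K}$, checks the resulting bimodules agree, and then invokes Eilenberg--Watts to upgrade this to a natural isomorphism of functors --- but these are the same argument, since Eilenberg--Watts precisely identifies a cocontinuous functor with tensoring by its value on the ring. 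One small point: your claim that \textbf{MF0} is ``immediate'' elides that $c_h$ for $h\in H$ is induced by a nontrivial ring automorphism $\phi_h$ of $\twR{H}$; what makes $c_h$ act as the identity on $K_n$ is that $\phi_h$ is \emph{inner} (conjugation by the unit $1\cdot h$), so $(\phi_h)_!$ is naturally isomorphic to the identity. The paper's treatment of this point is also brief, so this is a presentational rather than a mathematical gap.
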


In order to prove this theorem, we will construct ring homomorphisms between the twisted group rings $\twR{H}$, and use extension and restriction along these ring homomorphisms to build the data of our restriction, transfer, and conjugation morphisms for the Mackey functors $\underline{K}_n^G(R)$. In proving \autoref{thm:Mackey-func}, we avoid making reference to the algebraic $K$-theory functors until the very last step. The advantage of this approach is that the Mackey functor axioms can be proved up to natural isomorphism at the level of module categories. Then by applying $K_n$ for any $n$, the Mackey functor axioms will hold on the nose, due to the following remark, which can be seen as a consequence of additivity.

\begin{remark} If $F,G : \mathscr{C} \to \mathscr{D}$ are naturally isomorphic functors of exact categories, then $K_n(F) = K_n(G)$ as abelian group homomorphisms.
\end{remark}

Before discussing twisted group rings in greater detail, we include the following remark which will allow us to make use of more concise notation in the proofs for this section, without overburdening the reader with extraneous sums and indices.

\begin{remark}\label{rmk:pure-elements} Recall that the elements of a twisted group ring $\twR{G}$ are finite sums of the form $\sum_i r_i g_i$, with $r_i \in R$ and $g_i \in G$. At many points in this paper, it will suffice to prove facts about the group rings $\twR{G}$ on the ``pure elements,'' which are sums over a singleton set, i.e. elements of the form $rg$. For example, in order to define a ring homomorphism $f: \twR{G} \to S$ out of a twisted group ring, it suffices to define $f$ on pure elements, extend this definition additively (i.e. define $f(\sum_i r_i g_i) := \sum_i f(r_i g_i)$), and then verify that $f((r_1 g_1) \cdot (r_2 g_2)) = f(r_1g_1)\cdot f(r_2 g_2)$. Similarly when we know a function out of a twisted group ring to be additive, it suffices to verify that it is multiplicative on pure elements.
\end{remark}

\begin{notation} Given a ring homomorphism $f: R \to S$, we will use $f^\ast : \Mod(S) \to \Mod(R)$ to denote restriction of scalars, and we will use $f_! : \Mod(R) \to \Mod(S)$ to denote extension by scalars. This is intended to align with the notation found in \cite[\S4.4]{MM}, and agrees with the convention from six functors formalism that superscripts denote contravariant assignments, while subscripts are covariant.
\end{notation}

\subsection{Assignments of ring homomorphisms}
As discussed above, we would like to assign ring homomorphisms between group rings to certain classes of maps in the orbit category $\O_G$. Our lives will be easier if this can be done functorially, however ambiguities arising from choosing coset representative in the orbit category will hinder our ability to construct a functor $\O_G \to \Ring$. We can resolve these ambiguities by defining a ``fattened'' version of the orbit category, denoted $\widetilde{\O}_G$.

\begin{remark}\label{rmk:orbit-cat-representatives} Suppose $G/H,G/K$ are two transitive $G$-sets. Then any morphism $f: G/H \to G/K$ in $\O_G$ is determined uniquely by where it sends the identity coset; if $f(eH) = xK$, then because $f$ is $G$-equivariant, we have that $f(gH) = g\cdot f(eH) = gxK$. Thus we can encode the data of $f$ as a morphism in $\O_G$ concisely by the triple $(H, x, K)$. We note however that $(H,x,K) = (H, xk, K)$ for any $k\in K$, so morphisms in $\O_G$ can be represented by many different triples. We define a version of the orbit category in which morphisms are uniquely represented by a choice of coset representative.
\end{remark}

\begin{definition}\label{def:fattened-OG} Define the category $\widetilde{\O}_G$ to be the category whose objects are transitive $G$-sets $G/H$, and whose morphisms are triples $(H, x, K)$ for each element $x\in G$ satisfying $x^{-1}H x \subseteq K$. Composition is given by
\begin{align*}
    (J, y, K) \circ (H, x, J) = (H, xy, K).
\end{align*}
\end{definition}

We contrast this with $\O_G$, where we had that $(H, x, K) = (H, hx, K)$ for any $h\in H$ in this notation. These become distinct morphisms in $\widetilde{\O}_G$.

\begin{proposition} There is a functor
\begin{align*}
    \Tau: \widetilde{\O}_G &\to \Ring \\
    G/H &\mapsto \twR{H} \\
    (H, x, K) &\mapsto \left[ \tau_x: rh \mapsto \phi_{x^{-1}}(r) x^{-1}hx \right].
\end{align*}
\end{proposition}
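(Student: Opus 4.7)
The plan is to verify, in order: (i) well-definedness of each $\tau_x$ as a function $\twR{H} \to \twR{K}$, (ii) that each $\tau_x$ is a ring homomorphism, and (iii) that $\Tau$ preserves identities and composition. For (i), the defining condition $x^{-1} H x \subseteq K$ on morphisms of $\widetilde{\O}_G$ ensures $x^{-1} h x \in K$ for every $h \in H$, so $\tau_x(rh) = \phi_{x^{-1}}(r) \cdot x^{-1} h x$ lies in $\twR{K}$, and I extend additively to all of $\twR{H}$ as permitted by \autoref{rmk:pure-elements}.

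For (ii), unit preservation is immediate: $\tau_x(1 \cdot e) = \phi_{x^{-1}}(1) \cdot e = 1$. By \autoref{rmk:pure-elements} it then suffices to verify multiplicativity on pure elements. Expanding $\tau_x\bigl((r_1 h_1)(r_2 h_2)\bigr)$ via the multiplication in $\twR{H}$, and expanding $\tau_x(r_1 h_1) \cdot \tau_x(r_2 h_2)$ via the multiplication in $\twR{K}$, both collapse to $\phi_{x^{-1}}(r_1) \cdot \phi_{x^{-1} h_1}(r_2) \cdot x^{-1} h_1 h_2 x$ after applying the group-homomorphism identity $\phi_{x^{-1} h_1 x} \circ \phi_{x^{-1}} = \phi_{x^{-1} h_1}$. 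This is the step requiring the most care, since the twist exponent on the second argument must be tracked simultaneously through the multiplication in $\twR{H}$ and that in $\twR{K}$; the conjugation-and-composition convention in $\widetilde{\O}_G$ is chosen precisely so these align.

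For (iii), the identity morphism $(H, e, H)$ is sent to the identity of $\twR{H}$ since $\tau_e(rh) = \phi_e(r) \cdot h = rh$. For composition, the triple $(J, y, K) \circ (H, x, J) = (H, xy, K)$ must be sent to $\tau_y \circ \tau_x$, which follows from
\[
\tau_y\bigl(\phi_{x^{-1}}(r) \cdot x^{-1} h x\bigr) = \phi_{y^{-1}} \phi_{x^{-1}}(r) \cdot y^{-1} x^{-1} h x y = \phi_{(xy)^{-1}}(r) \cdot (xy)^{-1} h (xy) = \tau_{xy}(rh),
\]
again invoking the homomorphism property of $\phi$. I expect no conceptual obstacle here; the entire reason for introducing $\widetilde{\O}_G$ rather than working directly with $\O_G$ is that morphisms in $\widetilde{\O}_G$ carry a canonical representative $x \in G$, so that the assignment $x \mapsto \tau_x$ is strictly---not merely naturally---functorial.
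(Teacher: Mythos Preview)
Your proposal is correct and follows essentially the same approach as the paper's proof: verify that each $\tau_x$ is a ring homomorphism via the multiplicativity computation on pure elements (using $\phi_{x^{-1} h_1 x} \circ \phi_{x^{-1}} = \phi_{x^{-1} h_1}$), then observe $\tau_y \circ \tau_x = \tau_{xy}$ for functoriality. You are in fact slightly more thorough than the paper, making explicit the well-definedness check that $x^{-1} h x \in K$ and writing out the composition computation rather than merely asserting it.
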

\begin{proof} We must check that $\tau_x := \Tau(H,x,K)$ is a ring homomorphism. It is additive and preserves multiplicative and additive identities, so it suffices to check it is multiplicative. We see that
\begin{align*}
    \tau_x(r_1 h_1) \cdot \tau_x(r_2 h_2) &= \left( \phi_{x^{-1}}(r_1) x^{-1} h_1 x \right)\cdot \left(  \phi_{x^{-1}}(r_2) x^{-1} h_2 x \right) \\
    &= \phi_{x^{-1}}(r_1) \phi_{x^{-1} h_1 x}(\phi_{x^{-1}}(r_2)) x^{-1} h_1 x x^{-1} h_2 x = \phi_{x^{-1}}(r_1 \phi_{h_1}(r_2)) x^{-1} h_1 h_2 x\\
     &= \tau_x \left( r_1 \phi_{h_1}(r_2) h_1 h_2 \right) =\tau_x((r_1 h_1)\cdot (r_2 h_2)).
\end{align*}
We observe that $\tau_y \circ \tau_x = \tau_{xy}$, and we conclude that $\Tau$ is a functor.
\end{proof}

\begin{proposition} Let $H \subseteq G$ a subgroup, and let $x\in N_G(H)$ be an element of its normalizer. Then $\Tau(H, x, H)$ is a ring automorphism of $\twR{H}$, with inverse $\Tau(H, x^{-1}, H)$.
\end{proposition}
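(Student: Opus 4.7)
The plan is to obtain this as a direct corollary of the functoriality of $\Tau$ established in the previous proposition, once we verify that both $(H,x,H)$ and $(H,x^{-1},H)$ are legitimate morphisms in $\widetilde{\O}_G$ and that $\Tau$ sends the identity triple $(H,e,H)$ to the identity ring homomorphism. Essentially nothing computational remains once these bookkeeping points are made.

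First I would check that the hypothesis $x \in N_G(H)$ places both $(H,x,H)$ and $(H,x^{-1},H)$ inside $\widetilde{\O}_G$, per \autoref{def:fattened-OG}. Indeed, $x \in N_G(H)$ means $x^{-1}Hx = H$, so the containment $x^{-1}Hx \subseteq H$ required for $(H,x,H)$ is satisfied, and likewise $xHx^{-1} = H \subseteq H$ makes $(H,x^{-1},H)$ a valid morphism.

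Next I would compute the two composites using the composition law $(J,y,K)\circ (H,x,J) = (H,xy,K)$ from \autoref{def:fattened-OG}. This gives
\[
(H,x^{-1},H)\circ (H,x,H) = (H,e,H) \quad \text{and}\quad (H,x,H)\circ(H,x^{-1},H) = (H,e,H).
\]
A direct inspection of the formula defining $\Tau$ shows that $\Tau(H,e,H)$ sends $rh$ to $\phi_{e}(r)\,e^{-1}he = rh$, so it is the identity on $\twR{H}$.

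Finally, applying $\Tau$ to both composite identities and using the functoriality established in the preceding proposition (concretely $\tau_y \circ \tau_x = \tau_{xy}$), we obtain $\Tau(H,x^{-1},H)\circ \Tau(H,x,H) = \id_{\twR{H}}$ and the symmetric identity, so $\Tau(H,x,H)$ is a ring automorphism with the stated inverse. There is no serious obstacle here; the only point worth being careful about is double-checking the order of composition $\tau_y\circ\tau_x = \tau_{xy}$ so that the inverse is $\tau_{x^{-1}}$ and not $\tau_{x}$ applied on the other side.
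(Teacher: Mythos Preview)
Your argument is correct and is exactly the natural one: the paper in fact omits a proof of this proposition entirely, treating it as an immediate consequence of the functoriality $\tau_y\circ\tau_x=\tau_{xy}$ just established. Your write-up simply makes explicit the verification that $(H,x,H)$ and $(H,x^{-1},H)$ are morphisms in $\widetilde{\O}_G$ and that $\Tau(H,e,H)=\id$, which is all that is needed.
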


\begin{notation} There are two classes of ring homomorphisms in the image of $\Tau$ that we will interact with frequently, associated to our projection and conjugation morphisms in the orbit category, so we introduce concise notation to distinguish these. For a subgroup $H \subseteq K$, we denote by $\rho_H^K = \Tau(H,e,K)$:
\begin{align*}
    \rho_H^K : \twR{H} &\to \twR{K} \\
    rh &\mapsto rk.
\end{align*}
Given $H \subseteq G$ a subgroup and $g\in G$ arbitrary, we denote by $\gamma^g$ the map $\Tau(H, g^{-1}, {}^{g}H)$:
\begin{align*}
    \gamma^g : \twR{H} &\to \twR{{}^{g}H} \\
    rh &\mapsto \theta_g(r) ghg^{-1}.
\end{align*}
\end{notation}

In our Mackey functors, extension of scalars along $\rho_H^K$ will play the role of transfers, restriction of scalars along $\rho_H^K$ will be our restrictions, and finally extension along $\gamma^g$ will play the role of our conjugation maps.

\subsection{The module categories $\Mod_{\twR{H}}$}

The morphism $\rho_H^K : \twR{H} \to \twR{K}$ exhibits $\twR{K}$ as an $\twR{H}$-module. Really this is an injective inclusion of a subring, which leads us to wonder whether $\twR{K}$ is a free $\twR{H}$-module. This turns out to be true. There are other isomorphic copies of $\twR{H}$ embedded in $\twR{K}$, and we can translate between them by right multiplication by elements of $K$.

\begin{remark}\label{rmk:shift-map} Given any twisted group ring $\twR{K}$, and $y\in K$, we have a \textit{shift map}
\begin{align*}
    \sh_y : \twR{K} &\to \twR{K} \\
    rk &\mapsto rky.
\end{align*}
This is not a ring automorphism of $\twR{K}$, but it is an $\twR{H}$-module automorphism for any $H \subseteq K$.
\end{remark}
\begin{proof} Let $\twR{K} \in \Mod_{\twR{H}}$ under the structure map $\rho_H^K$ for any subgroup $H \subseteq K$, let $rh \in \twR{H}$ and $r' k$ be a pure element of the ring $\twR{K}$. Then we see that
\begin{align*}
    \sh_y \left( (rh)\cdot \left( r' k \right) \right) &= \sh_y \left( r \phi_h(r') hk \right) = r\phi_h(r') hky \\
    &= (rh)\cdot (r' k y) = (rh)\cdot \sh_y(r' k).
\end{align*}
It is clear that $\sh_y$ is bijective on pure elements, and therefore as an endomorphism of $\twR{K}$ as an $\twR{H}$-module.
\end{proof}

Provided these shift maps, we can describe the structure of $\twR{K}$ as an $\twR{H}$-module via the following result.

\begin{proposition}\label{prop:pi-gives-free-module} Suppose $G$ is finite, and let $H \subseteq K$ be subgroups of $G$. Then $\twR{K}$ is a free $\twR{H}$-module under the morphism $\rho_H^K$.
\end{proposition}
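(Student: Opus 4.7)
The plan is to produce an explicit free basis for $\twR{K}$ as an $\twR{H}$-module, adapting the classical argument for untwisted group rings to accommodate the twisting. Choose a system of right coset representatives $y_1, \ldots, y_n$ for $H\backslash K$, so that $K = \bigsqcup_{i=1}^n H y_i$. The claim is that $\{y_1,\ldots,y_n\} \subseteq \twR{K}$ forms a free $\twR{H}$-basis, i.e. that
\begin{align*}
\twR{K} = \bigoplus_{i=1}^n \twR{H}\cdot y_i
\end{align*}
as $\twR{H}$-modules, where the $\twR{H}$-action is via the subring inclusion $\rho_H^K$.

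First I would argue that each summand $\twR{H}\cdot y_i$ is free of rank one over $\twR{H}$. For this I appeal to the previous remark: the shift map $\sh_{y_i}: \twR{K} \to \twR{K}$ is an $\twR{H}$-module automorphism, and it restricts to an isomorphism from the image of $\rho_H^K$ (which is tautologically a cyclic free $\twR{H}$-module of rank one) onto the submodule $\twR{H}\cdot y_i$.

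Next I would verify that the sum equals $\twR{K}$ and is direct. For spanning, it suffices by additivity to handle a pure element $rk \in \twR{K}$. Writing $k = h y_i$ uniquely in $K$, we have $rk = (rh)\cdot y_i$, where the right-hand side is the $\twR{H}$-action of $rh$ on $y_i$ inside $\twR{K}$. For directness, suppose $\sum_i a_i\cdot y_i = 0$ with $a_i = \sum_{h \in H} r_{i,h} h \in \twR{H}$. Expanding in $\twR{K}$ yields $\sum_{i,h} r_{i,h}(hy_i) = 0$. Since $\twR{K}$ is free as a left $R$-module on the underlying set $K$, and since the elements $\{hy_i : h \in H,\ 1 \le i \le n\}$ enumerate $K$ bijectively by the choice of coset representatives, each coefficient $r_{i,h}$ must vanish, so each $a_i = 0$.

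The only subtlety to watch is the distinction between ordinary multiplication inside $\twR{K}$ and the $\twR{H}$-module action via $\rho_H^K$; because $\rho_H^K$ is literally the inclusion of subrings $\twR{H} \hookrightarrow \twR{K}$, the two operations agree on the nose, so the twisting cocycle $\theta$ never enters the verification and no additional computation beyond those already carried out in the shift-map remark is required.
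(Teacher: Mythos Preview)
Your proof is correct and follows essentially the same approach as the paper: both choose right coset representatives $y_1,\ldots,y_n$ for $H\backslash K$, invoke the shift maps $\sh_{y_i}$ to identify each $\twR{H}\cdot y_i$ with a free rank-one module, and then verify spanning and independence. Your independence argument, which passes to the underlying $R$-basis $K$ of $\twR{K}$, is slightly more explicit than the paper's appeal to the images intersecting trivially, but the content is the same.
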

\begin{proof} We remark that $\rho_H^K$ is injective as an $\twR{H}$-module homomorphism, and that the shift maps were injective as in \autoref{rmk:shift-map}, therefore $\sh_y \circ \rho_H^K: \twR{H} \to \twR{K}$ is injective. Picking right coset representatives $y_1, \ldots, y_n$ for $H\backslash K$, we claim that the ring homomorphism
\begin{align*}
    (\sh_{y_1}\circ \rho_H^K, \ldots, \sh_{y_n}\circ \rho_H^K): \bigoplus_{y_i \in H \backslash K} \twR{H} \to \twR{K}
\end{align*}
is an $\twR{H}$-module isomorphism. As this is a direct sum of injective module homomorphisms, and their images can be checked to intersect trivially in $\twR{K}$, thus the map is clearly injective. Moreover, it is easy to see it is surjective, as every element $rk \in \twR{K}$ can be written as $r h y_i$ for some $h\in H$ and $i$, and thus is hit by the homomorphism $\sh_{y_i} \rho_H^K$.
\end{proof}

\begin{corollary}\label{cor:right-coset-reps-give-basis} Let $H \subseteq K$ subgroups of $G$, and let $y_1, \ldots, y_n$ be right coset representatives for $H\backslash K$. Then the set
\begin{align*}
    \left\{ 1_R y_1, \ldots, 1_R y_n \right\}
\end{align*}
is an $\twR{H}$-basis for $\twR{K}$, viewed as a left $\twR{H}$-module.
\end{corollary}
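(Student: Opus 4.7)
My plan is to deduce the corollary immediately from Proposition \ref{prop:pi-gives-free-module}, which has already done the bulk of the work. That proposition constructs an explicit $\twR{H}$-module isomorphism
$$\Phi := (\sh_{y_1}\circ \rho_H^K, \ldots, \sh_{y_n}\circ \rho_H^K): \bigoplus_{i=1}^{n} \twR{H} \xto{\sim} \twR{K},$$
so it suffices to trace where the standard free basis of the source is sent by $\Phi$. Write $e_i$ for the standard basis element of $\bigoplus_{i=1}^{n}\twR{H}$ having $1_R \cdot e_H$ in the $i$th coordinate and zero elsewhere; since an isomorphism of free $\twR{H}$-modules carries a basis to a basis, it will be enough to verify that $\Phi(e_i) = 1_R y_i$ for each $i$.

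A direct computation gives
$$\Phi(e_i) = (\sh_{y_i}\circ \rho_H^K)(1_R \cdot e_H) = \sh_{y_i}(1_R \cdot e_K) = 1_R \cdot e_K \cdot y_i = 1_R y_i,$$
using that the identity elements of $H$ and $K$ agree (both being the identity of $G$). Thus $\Phi$ carries $\{e_1, \ldots, e_n\}$ bijectively onto $\{1_R y_1, \ldots, 1_R y_n\}$, which therefore forms an $\twR{H}$-basis for $\twR{K}$. There is no real obstacle to navigate here beyond bookkeeping through the definitions of the shift map and $\rho_H^K$; the value of stating the corollary separately is that it pins down an explicit, canonical basis (given the choice of coset representatives) that will be indispensable for writing down transfer maps and verifying the Mackey functor axioms in the remainder of the section.
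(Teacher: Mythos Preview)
Your proof is correct and is exactly the intended deduction from Proposition~\ref{prop:pi-gives-free-module}; the paper itself does not spell out a separate proof, treating the corollary as immediate from the explicit isomorphism constructed there. Your tracing of the standard basis through $\Phi$ is the natural way to make this explicit.
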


We will also want to understand how bases of free modules are affected under extension along the morphisms $\gamma^x : \twR{H} \to \twR{{}^{x}H}$. We remark that $\gamma^x$ is the restriction of a ring automorphism $\twR{G} \to \twR{G}$ of the form $rg \mapsto \phi_x(r) xgx^{-1}$. By keeping this fact in mind, we are able to better characterize extension of scalars along $\gamma^x$ for certain types of free $\twR{H}$-modules, as is shown by the following remark.

\begin{remark} Let $R,S \subseteq T$ be subrings of a general ambient ring, and let $f: T \to T$ be a ring isomorphism which restricts to an isomorphism of subrings $g:= \left. f \right|_{ R } : R \xto{\sim} S$. Suppose that $M$ is a free $R$-module of the form $\oplus_i R t_i$, where $t_i \in T$. Then the $S$-module $g_! M$ obtained by extension of scalars along $g$ is the free $S$-module of the form $\oplus_i S f(t_i)$.
\end{remark}

\begin{example}\label{ex:restriction-along-conj-basis} As a particular case of this remark, suppose that we have a free $\twR{H}$-module $M$ of the form $M = \oplus \twR{H} 1_R y_i$ for some $y_i \in G$. Then we have that
\begin{align*}
    \gamma^x_! M = \oplus \twR{{}^{x}H} \gamma^x(1_R y_i) = \oplus \twR{{}^{x}H} x y_i x^{-1}.
\end{align*}
\end{example}

In \autoref{subsec:MF6} we will need to prove complicated natural isomorphisms related to extension and restriction of scalars along these homomorphisms of twisted group rings, and \autoref{ex:restriction-along-conj-basis} will come into play.

\subsection{Restriction and transfer}\label{subsec:res-transfer-Mn} In this section we will define extension and restriction of scalars along homomorphisms of twisted group rings and verify their basic properties.

\begin{proposition}\label{prop:res-tr-functors} For subgroups $H \subseteq K$ of $G$, the morphism $\rho_H^K: R_\theta[H] \to R_\theta[K] $ induces extension and restriction of scalar functors, which restrict to the subcategory of finitely generated projective modules, and induce functors which we denote by
\begin{align*}
    \Tr_K^H := \left( \rho_H^K \right)_!:\mathcal{P}(\twR{H}) &\to \mathcal{P}(\twR{K}) \\
    \Res_K^H := \left( \rho_H^K \right)^\ast : \mathcal{P}(\twR{K}) &\to \mathcal{P}(\twR{H}).
\end{align*}
\end{proposition}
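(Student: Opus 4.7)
The plan is to separate the statement into two assertions: that extension and restriction of scalars along any ring homomorphism are well-defined functors (which is standard), and that both preserve the subcategory of finitely generated projective modules. Only the second assertion requires argument, and the key input is \autoref{prop:pi-gives-free-module}, which tells us that $\twR{K}$ is a finitely generated free module over $\twR{H}$ via $\rho_H^K$.

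For extension of scalars, I would first observe that $(\rho_H^K)_! = \twR{K} \otimes_{\twR{H}} -$ sends $\twR{H}$ to $\twR{K}$, which is finitely generated free (in particular projective) as a $\twR{K}$-module. Since extension of scalars is additive and commutes with finite direct sums and with retracts (being a left adjoint, it is right exact, and in particular preserves split monomorphisms and split epimorphisms), it sends finitely generated free modules to finitely generated free modules, and hence sends finitely generated projective modules, which are precisely the retracts of finitely generated free modules, to finitely generated projective modules.

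For restriction of scalars, the main observation is that $\Res_K^H \twR{K}$ is finitely generated free over $\twR{H}$ by \autoref{prop:pi-gives-free-module}, since $G$ is finite so $H \backslash K$ is finite and \autoref{cor:right-coset-reps-give-basis} produces an explicit finite $\twR{H}$-basis. Then for any finitely generated free $\twR{K}$-module $\twR{K}^n$, we have $\Res_K^H \twR{K}^n \cong (\Res_K^H \twR{K})^n$, which is finitely generated free over $\twR{H}$. Since restriction of scalars is additive and preserves retracts, any retract of $\twR{K}^n$ in $\Mod(\twR{K})$ restricts to a retract of a finitely generated free $\twR{H}$-module, which is finitely generated projective.

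Neither step presents a serious obstacle; the main content is invoking \autoref{prop:pi-gives-free-module} for restriction, and the rest is bookkeeping with standard properties of base change. In the write-up I would keep the argument short, explicitly naming the functors $\Tr_K^H$ and $\Res_K^H$ after verifying they land in $\mathcal{P}(\twR{K})$ and $\mathcal{P}(\twR{H})$ respectively, so that the notation is available for \autoref{subsec:res-transfer-Mn} and the later verification of the Mackey functor axioms.
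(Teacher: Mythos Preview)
Your proposal is correct and follows essentially the same approach as the paper: the paper simply notes that extension of scalars always preserves finitely generated projective modules, and that restriction of scalars does so here because $\twR{K}$ is finitely generated projective (in fact free) over $\twR{H}$ by \autoref{prop:pi-gives-free-module}. Your write-up is a bit more detailed in unpacking the retract argument, but the content and key input are identical.
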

\begin{proof} We recall that extension of scalars always preserves projective modules. Restriction of scalars does not in general preserve projective modules, however as $R_\theta[K]$ is finitely generated and projective over $R_\theta[H]$ by \autoref{prop:pi-gives-free-module}, restriction of scalars along $\rho_H^K$ descends to the subcategories of finitely generated projective modules.
\end{proof}

We denote by $\gamma^g_!: \mathcal{P}(\twR{H}) \to \mathcal{P}(\twR{{}^{g}H})$ the extension of scalars functor along the ring homomorphism $\gamma^g: \twR{H} \to \twR{{}^{g}H}$. 

\begin{proposition}\label{prop:our-functors-are-exact} Let $G$ be finite. Then $\Tr_K^H$, $\Res_K^H$, and $\gamma^g_!$ are exact.
\end{proposition}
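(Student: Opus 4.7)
The plan is to exploit the fact that the exact structure on $\mathcal{P}(\twR{H})$, inherited from $\Mod(\twR{H})$, is split: every short exact sequence $0 \to A \to B \to C \to 0$ in $\mathcal{P}(\twR{H})$ splits because the cokernel $C$ is projective. Since additive functors automatically preserve split short exact sequences, it suffices to verify that each of $\Tr_K^H$, $\Res_K^H$, and $\gamma^g_!$ is additive, which is immediate from the general fact that extension and restriction of scalars along a ring homomorphism give additive functors of module categories.

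Alternatively, and perhaps more informatively, one can argue directly at the level of the ambient module categories. The functor $\Res_K^H$ is exact on all of $\Mod(\twR{K})$ because restriction of scalars along any ring homomorphism leaves the underlying abelian group of a module unchanged, so a sequence is exact in $\Mod(\twR{K})$ iff its restriction to $\Mod(\twR{H})$ is. The functor $\Tr_K^H = (-) \otimes_{\twR{H}} \twR{K}$ is exact because $\twR{K}$ is flat over $\twR{H}$: this follows from the freeness statement of \autoref{prop:pi-gives-free-module}, applied on the appropriate side via the analogous argument with left coset representatives in place of right coset representatives. Finally, $\gamma^g_!$ is an equivalence of module categories, since $\gamma^g : \twR{H} \to \twR{{}^{g}H}$ is a ring isomorphism with inverse $\gamma^{g^{-1}}$ (immediate from the functoriality of $\Tau$ together with the identity ${}^{g^{-1}}({}^{g}H) = H$), so $\gamma^g_!$ is automatically exact with inverse $\gamma^{g^{-1}}_!$.

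No substantive obstacle arises here; the proof is a short collection of standard observations, and the hypothesis that $G$ is finite enters only through its role in guaranteeing that $\twR{K}$ is finitely generated (and hence finitely presented and free) over $\twR{H}$, as already invoked in \autoref{prop:res-tr-functors} and \autoref{prop:pi-gives-free-module}.
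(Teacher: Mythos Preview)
Your second argument is essentially identical to the paper's own proof: the paper argues that $\twR{K}$ is free (hence flat) over $\twR{H}$ by \autoref{prop:pi-gives-free-module}, that restriction of scalars is always exact since the underlying sequences of abelian groups are unchanged, and that $\gamma^g_!$ is exact because $\gamma^g$ is a ring isomorphism.

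Your first argument, via the observation that the exact structure on $\mathcal{P}(\twR{H})$ is split so that any additive functor is automatically exact, is a genuinely different and slightly slicker route. It has the advantage of not needing to invoke flatness or \autoref{prop:pi-gives-free-module} at all for this step (though of course that result is still needed elsewhere to know the functors land in $\mathcal{P}$). The paper's direct argument, on the other hand, shows a bit more: $\Tr_K^H$ and $\Res_K^H$ are in fact exact as functors on the full module categories, not merely on the split-exact subcategories $\mathcal{P}$. Either approach is perfectly adequate here.
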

\begin{proof} By \autoref{prop:pi-gives-free-module}, we have that $R_\theta[K]$ is free over $R_\theta[H]$, and in particular it is flat. Therefore extension of scalars (which is tensoring $R_\theta[H]$-modules with $R_\theta[K]$) is exact. Extension along $\gamma^g$ is exact as $\gamma^g$ is a ring isomorphism. Finally, restriction of scalars is always exact since any exact sequence of $R_\theta[K]$-modules will remain exact (as the underlying sets and functions are unchanging) when viewed as an $R_\theta[H]$-module.
\end{proof}

\begin{lemma}\label{lem:webb-mackey-functor-definition} Let $J \subseteq K \subseteq H \subseteq G$ be subgroups, and let $g\in G$. Then the following hold (where the reader is encouraged to compare the indexing here with that of \autoref{def:Mackey-functor}):
\begin{enumerate}[(1)]
    \setcounter{enumi}{-1}
    \item[\textbf{(0)}] $\Tr_H^H$ and $\Res_H^H$ are equal to the identity functor on $\mathcal{P}(\twR{H})$, and $\gamma^h_!$ is naturally isomorphic to the identity for all $h\in H$.

    \item[\textbf{(1)}] We have a natural isomorphism
    \begin{align*}
        \Res_J^K \Res_K^H \cong \Res_J^H.
    \end{align*}

    \item[\textbf{(2)}] We have a natural isomorphism

    \begin{align*}
        \Tr_K^H \Tr_J^K \cong \Tr_J^H.
    \end{align*}

    \item[\textbf{(3)}] We have a natural isomorphism \[\gamma^g_! \gamma^h_! \cong \gamma^{gh}_!\]
    
    \item[\textbf{(5)}] We have a natural isomorphism

    \begin{align*}
        \Tr_{{}^{g}K}^{{}^{g}H} \gamma^g_! &\cong \gamma^g_! \Tr_K^H.
    \end{align*}
\end{enumerate}
\end{lemma}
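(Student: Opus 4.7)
The plan is to reduce each natural isomorphism to a statement about a composition (or equality) of ring homomorphisms between twisted group rings, then invoke standard facts: (i) restriction of scalars is strictly functorial, i.e.\ $(g\circ f)^\ast = f^\ast \circ g^\ast$ on the nose; (ii) extension of scalars is pseudofunctorial, i.e.\ $(g\circ f)_! \cong g_! \circ f_!$ naturally via the associativity of the tensor product; and (iii) extension along an inner automorphism $\conj_u$ (by a two-sided unit $u$) is naturally isomorphic to the identity via the map $m \mapsto u^{-1}\otimes m$.

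For \textbf{(0)}, the ring homomorphism $\rho_H^H$ is literally the identity of $R_\theta[H]$, so $\Tr_H^H$ and $\Res_H^H$ are equal (on the nose) to the identity functor. For $\gamma^h_!$ with $h\in H$, one observes ${}^hH = H$, and a direct computation in $R_\theta[H]$ shows that $\gamma^h$ equals conjugation by the unit $1_R h \in R_\theta[H]$ (which has two-sided inverse $1_R h^{-1}$). The natural isomorphism $\id \Rightarrow \gamma^h_!$ then follows from (iii).

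For \textbf{(1)} and \textbf{(2)}, the inclusions of subrings satisfy $\rho_J^H = \rho_K^H \circ \rho_J^K$ as morphisms in $\Ring$. Applying (i) and (ii) gives $\Res_J^H = \Res_J^K \Res_K^H$ strictly and $\Tr_J^H \cong \Tr_K^H \Tr_J^K$ naturally.

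For \textbf{(3)} and \textbf{(5)}, one performs short direct computations on pure elements $rk$. For (3),
\begin{align*}
    (\gamma^g \circ \gamma^h)(rk) = \gamma^g(\theta_h(r) hkh^{-1}) = \theta_{gh}(r)(gh)k(gh)^{-1} = \gamma^{gh}(rk),
\end{align*}
so $\gamma^g \circ \gamma^h = \gamma^{gh}$ as ring homomorphisms, and (ii) concludes. For (5), both $\rho_{{}^gK}^{{}^gH}\circ \gamma^g$ and $\gamma^g \circ \rho_K^H$ send $rk \mapsto \theta_g(r) gkg^{-1}$, so they agree on the nose, and (ii) again concludes.

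The only step requiring any genuine thought is \textbf{(0)}, where one must recognize $\gamma^h$ as an inner automorphism of $R_\theta[H]$ rather than a strict identity; the rest is bookkeeping about how compositions of ring homomorphisms interact with the extension/restriction adjoint pair, confined to the finitely generated projective subcategory via \autoref{prop:res-tr-functors} and \autoref{prop:our-functors-are-exact}.
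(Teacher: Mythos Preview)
Your proof is correct and follows the same approach as the paper: reduce each statement to an identity among the ring homomorphisms $\rho_H^K$ and $\gamma^g$, then invoke (pseudo)functoriality of extension and restriction of scalars. For parts \textbf{(1)}, \textbf{(2)}, \textbf{(3)}, and \textbf{(5)} the paper simply asserts that these are ``basic consequences of the functoriality of restriction and extension of scalars,'' which is exactly what you spell out.

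For \textbf{(0)} your argument is in fact sharper than the paper's. The paper claims that to see $\gamma^h_!$ is naturally isomorphic to the identity ``it suffices to see that $\gamma^h$ is a ring automorphism of $R_\theta[H]$.'' Taken literally this is not enough: extension of scalars along an arbitrary ring automorphism need not be naturally isomorphic to the identity functor. Your observation that $\gamma^h$ is the \emph{inner} automorphism given by conjugation by the unit $1_R h$, together with the standard natural isomorphism $M \cong R_\phi \otimes_R M$ for inner $\phi$, is what actually closes this gap.
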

\begin{proof} For (0), it is clear that $\Tr_H^H$ and $\Res_H^H$ are the identity. To see that $\gamma^h_!$ is naturally isomorphic to the identity on $\mathcal{P}(\twR{H})$, it suffices to see that $\gamma^h$ is a ring automorphism of $\twR{H}$. This is clear, since $\gamma^h \circ \gamma^{h^{-1}} = \gamma^e = \id$.

The statements (1), (2), (3), and (5) are basic consequences of the functoriality of restriction and extension of scalars.

\end{proof}

\begin{proposition} Let $H \subseteq G$ be a subgroup and $g\in G$. Then we have the following.
\begin{enumerate}
        \item[\textbf{(4)}] There is a natural isomorphism of functors
    \begin{align*}
        \Res_{{}^{g}K}^{{}^{g}H} \gamma^g_! &\cong \gamma^g_! \Res_K^H.
    \end{align*}
\end{enumerate}
\end{proposition}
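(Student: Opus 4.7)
The plan is to exploit the commutativity of the square of ring homomorphisms
\[
\begin{tikzcd}
\twR{K} \ar[r, "\rho_K^H"] \ar[d, "\gamma^g"'] & \twR{H} \ar[d, "\gamma^g"] \\
\twR{{}^{g}K} \ar[r, "\rho_{{}^{g}K}^{{}^{g}H}"'] & \twR{{}^{g}H}
\end{tikzcd}
\]
together with the fact that the vertical arrows are ring isomorphisms. First I would verify that this square commutes by a direct check on a pure element $rk \in \twR{K}$: both composites send $rk$ to $\theta_g(r)(gkg^{-1}) \in \twR{{}^{g}H}$, since $\rho$ is inclusion of coefficients along the identity element and $\gamma^g$ does not interact with the chosen coset representative.

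Next, since $\gamma^g$ is a ring isomorphism with inverse $\gamma^{g^{-1}}$, for any $M \in \mathcal{P}(\twR{H})$ the extension $\gamma^g_! M = \twR{{}^{g}H} \otimes_{\twR{H}} M$ is naturally isomorphic, as a $\twR{{}^{g}H}$-module, to $M$ itself equipped with the twisted action $y \cdot_{\mathrm{new}} m := \gamma^{g^{-1}}(y) \cdot m$ (with the original $\twR{H}$-action on the right). After restriction along $\rho_{{}^{g}K}^{{}^{g}H}$, a pure element $rk' \in \twR{{}^{g}K}$ acts on $m \in M$ by
\[
rk' \cdot_{\mathrm{new}} m = \gamma^{g^{-1}}(rk') \cdot m = \theta_{g^{-1}}(r)(g^{-1} k' g) \cdot m,
\]
where we use that $\rho_{{}^{g}K}^{{}^{g}H}$ is inclusion. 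For the other composite, $\Res_K^H M$ is $M$ with its $\twR{K}$-action, and $\gamma^g_! \Res_K^H M$ is again $M$ as an abelian group, now with a twisted $\twR{{}^{g}K}$-action of exactly the same form $rk' \cdot m = \theta_{g^{-1}}(r)(g^{-1}k'g) \cdot m$. The identity on the underlying abelian group $M$, composed with the two unit isomorphisms for extension along $\gamma^g$, then furnishes the desired natural isomorphism, with naturality in $M$ immediate from the naturality of those unit isomorphisms.

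The only real subtlety is the bookkeeping of which twisted action appears on which side; there is no substantive obstacle. As an alternative conceptual packaging, one can observe that because the vertical arrows in the square above are isomorphisms, the square is a pushout of rings, and so the canonical bimodule map $\twR{{}^{g}K} \otimes_{\twR{K}} \twR{H} \to \twR{{}^{g}H}$ is an isomorphism of $(\twR{{}^{g}K}, \twR{H})$-bimodules. Associativity of the tensor product then gives
\[
\Res_{{}^{g}K}^{{}^{g}H} \gamma^g_! M \;=\; \twR{{}^{g}H} \otimes_{\twR{H}} M \;\cong\; \twR{{}^{g}K} \otimes_{\twR{K}} \twR{H} \otimes_{\twR{H}} M \;\cong\; \twR{{}^{g}K} \otimes_{\twR{K}} M \;=\; \gamma^g_! \Res_K^H M,
\]
naturally in $M$.
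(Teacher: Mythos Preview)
Your argument is correct and rests on the same two ingredients as the paper's proof: the commutative square of ring homomorphisms and the fact that $\gamma^g$ is a ring isomorphism. The paper packages this purely categorically---inserting the natural isomorphisms $\gamma^g_!(\gamma^g)^\ast \cong \id$ and $(\gamma^g)^\ast\gamma^g_! \cong \id$ and using functoriality of $(-)^\ast$ on the square to rewrite $(\gamma^g)^\ast(\rho_{{}^{g}K}^{{}^{g}H})^\ast$ as $(\rho_K^H)^\ast(\gamma^g)^\ast$---whereas you unwind these same isomorphisms concretely as twisted actions on the underlying abelian group of $M$ (and your bimodule alternative is yet another equivalent repackaging of the same base-change isomorphism).
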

\begin{proof} Consider the commutative square
\[ \begin{tikzcd}
    \twR{H}\rar[hook,"\rho_H^K" above]\dar["\gamma^g" left] & \twR{K}\dar["\gamma^g" right]\\
    \twR{{}^{g}H}\rar[hook, "\rho_{{}^{g}H}^{{}^{g}K}" below] & \twR{{}^{g}K}.
\end{tikzcd} \]
Since $\gamma^g$ is an isomorphism of rings, extension along it is an equivalence of categories, and therefore the unit $\id \to (\gamma^g)^\ast \gamma^g_!$ and counit $\gamma^g_! (\gamma^g)^\ast \to \id$ are natural isomorphisms. Thus we see that
\begin{align*}
    \left( \rho_{{}^{g}H}^{{}^{g}K} \right)^\ast \gamma^g_! &\cong \gamma^g_! \left( \gamma^g \right)^\ast \left( \rho_{{}^{g}H}^{{}^{g}K} \right)^\ast \gamma^g_! 
    \cong \gamma^g_! \left( \rho_H^K \right)^\ast (\gamma^g)^\ast \gamma^g_!
    \cong \gamma^g_! \left( \rho_H^K \right)^\ast.
\end{align*}
\end{proof}

Thus we have established natural isomorphisms of restriction, transfer, and conjugation functors of exact categories. By the additivity theorem, after applying $K_n$, we will obtain equalities of abelian group homomorphisms, satisfying the axioms of a Mackey functor. Before we are able to do this, it remains to prove the most difficult of all the axioms for a Mackey functor, which is the Mackey decomposition formula.
\begin{enumerate}[(1)]
    \setcounter{enumi}{5}
    \item[\textbf{(6)}] For $J,K \subseteq H$ subgroups of $G$, we have a natural isomorphism of functors
    \begin{align*}
        \Res_J^H \Tr_K^H \cong \sum_{x\in [J\backslash H/K]} \Tr_{J\cap {}^{x}K}^J \gamma^x_! \Res_{J^x \cap K}^K.
    \end{align*}
\end{enumerate}

Verifying this last axiom will take up the entirety of \autoref{subsec:MF6}, with some of the work deferred to \autoref{sec:bimodule}.

\subsection{The Mackey decomposition formula for twisted group rings}\label{subsec:MF6}

In order to prove that $\Res_J^H \Tr_K^H$ and $\sum_{x\in [J\backslash H/K]} \Tr_{J\cap {}^{x}K}^J \gamma^x_! \Res_{J^x \cap K}^K$ are naturally isomorphic as functors from $\mathcal{P}(\twR{K})$ to $\mathcal{P}(\twR{J})$, we will first verify that their images agree on $R_\theta[K]$. Not only does one obtain isomorphic left $R_\theta[J]$-modules when plugging $R_\theta[K]$ into each of these functors, there is actually an induced right $R_\theta[K]$-module structure for which their images on $R_\theta[K]$ agree as bimodules. By a theorem of Eilenberg and Watts, this will be sufficient to demonstrate that the functors are naturally isomorphic.

\begin{proposition}\label{prop:cardinality-counting-double-cosets} Let $J,K \subseteq H$ be subgroups, and let $x_1, \ldots, x_n$ be a set of double coset representatives for $J \backslash H/K$. Then
\begin{align*}
    |H:J| = \sum_{i=1}^n \left| K : J^{x_i} \cap K \right|.
\end{align*}
\end{proposition}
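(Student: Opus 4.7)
The plan is to prove this by the standard double coset partition argument, viewing $|H:J|$ as $|H|/|J|$ (we may implicitly work with finite groups since we are inside $G$ finite, or interpret the indices set-theoretically).

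First I would partition $H$ as a disjoint union of the double cosets
\[
    H = \coprod_{i=1}^n J x_i K.
\]
Taking cardinalities yields $|H| = \sum_{i=1}^n |J x_i K|$. The next step is to invoke the classical double coset size formula
\[
    |J x_i K| = \frac{|J|\cdot|K|}{|J \cap x_i K x_i^{-1}|} = \frac{|J|\cdot|K|}{|x_i^{-1} J x_i \cap K|} = \frac{|J|\cdot|K|}{|J^{x_i}\cap K|},
\]
which follows from the orbit-stabilizer theorem applied to the $J \times K$-action on $H$ by $(j,k)\cdot h = jhk^{-1}$: the orbit through $x_i$ is exactly $J x_i K$, while the stabilizer of $x_i$ is $\{(j,k): jx_i = x_ik\}$, which projects isomorphically onto $x_i^{-1}Jx_i \cap K = J^{x_i}\cap K$ via the second coordinate.

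Substituting and dividing through by $|J|$ gives
\[
    |H:J| = \frac{|H|}{|J|} = \sum_{i=1}^n \frac{|K|}{|J^{x_i}\cap K|} = \sum_{i=1}^n |K : J^{x_i}\cap K|,
\]
as desired. There is no real obstacle here; the only thing one must be careful about is matching the exponent convention $J^{x} = x^{-1}Jx$ with the side on which the stabilizer lands, but both forms of the denominator are equal so either choice recovers the stated formula.
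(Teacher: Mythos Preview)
Your argument is correct: partition $H$ into double cosets, apply orbit--stabilizer to compute $|Jx_iK|$, and divide by $|J|$. This is the standard elementary proof and there is nothing to object to.

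The paper takes a slightly different route. Rather than counting elements of $H$, it works at the level of finite $H$-sets: the pullback of the cospan $H/J \to H/H \leftarrow H/K$ is identified as $\coprod_{x\in [J\backslash H/K]} H/(J^x\cap K)$, and comparing cardinalities of $H/J \times H/K$ with this coproduct yields the formula after cancelling $|H/K|$. The two arguments are essentially dual (yours counts elements, the paper's counts cosets), but the paper's version has the advantage of foreshadowing the Mackey double-coset decomposition itself --- the pullback square it invokes is exactly the one underlying axiom \textbf{MF6}, so the proof doubles as a warm-up for the bimodule comparison that follows. Your version, on the other hand, is self-contained and requires no familiarity with the $G$-set pullback.
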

\begin{proof} From the pullback square of finite $H$-sets
\[ \begin{tikzcd}
    \coprod_{x\in [J\backslash H/K]} H/(J^x \cap K)\rar\dar\pb & H/J\dar\\
    H/K\rar & H/H,\\
\end{tikzcd} \]
we obtain an isomorphism of finite $H$-sets:
\begin{align*}
    H/J \times H/K &\cong \amalg_{x\in [J\backslash H/K]} H/(J^x \cap K).
\end{align*}
As the underlying sets are bijective, we see that
\begin{align*}
    |H/J|\cdot |H/K| &= \sum_{x\in [J\backslash H/K]} |H/(J^x \cap K)| \\
    &= \sum_{x\in [J\backslash H/K]} |H/K|\cdot |K/(J^x \cap K)|.
\end{align*}
Canceling $|H/K|$ from each side gives the desired equality.
\end{proof}

\begin{proposition}\label{prop:RH-basis} For each $x_i\in [J\backslash H/K]$, let $\beta_{i,1}, \ldots, \beta_{i,r_i}$ denote a set of right coset representatives for $(J^{x_i} \cap K)\backslash K$. Then the right cosets for $J$ in $H$ are given by
\begin{align*}
    H = \bigcup_{i=1}^n \bigcup_{\ell=1}^{r_i} J x_i \beta_{i,\ell}.
\end{align*}
\end{proposition}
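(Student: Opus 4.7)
The plan is to prove this by showing both that every element of $H$ lies in some $Jx_i\beta_{i,\ell}$, and then using the cardinality count from the preceding proposition to conclude that the union must actually be disjoint (equivalently a partition).

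First, I would handle the covering: given any $h\in H$, since $\{x_1,\ldots,x_n\}$ is a complete set of double coset representatives, there is some $i$ with $h\in Jx_iK$, so we may write $h = jx_ik$ for some $j\in J$ and $k\in K$. Then by our choice of right coset representatives for $(J^{x_i}\cap K)\backslash K$, we have $k = m\beta_{i,\ell}$ for some $m\in J^{x_i}\cap K$ and some $\ell$. Writing $m = x_i^{-1}j'x_i$ with $j'\in J$ (using $m\in J^{x_i}$), we compute
\[
h \;=\; j x_i (x_i^{-1} j' x_i) \beta_{i,\ell} \;=\; (jj')\, x_i \beta_{i,\ell} \;\in\; J x_i \beta_{i,\ell},
\]
which shows the union covers all of $H$.

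Next, I would invoke \autoref{prop:cardinality-counting-double-cosets} to get the cardinality equality $|H:J| = \sum_{i=1}^n |K: J^{x_i}\cap K| = \sum_{i=1}^n r_i$. Since each $Jx_i\beta_{i,\ell}$ is a right coset of $J$ in $H$, hence of cardinality $|J|$, and since the union of all $\sum r_i$ such cosets covers $H$ which has exactly $\sum r_i$ right cosets of $J$, the cosets $Jx_i\beta_{i,\ell}$ must be pairwise distinct. This forces the union to be a disjoint union giving the complete set of right $J$-cosets in $H$, as desired.

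The main subtlety will be the manipulation in the covering step to absorb the element $m\in J^{x_i}\cap K$ into the left factor via conjugation by $x_i$; once this trick is in place, the counting argument makes disjointness automatic and saves us from separately verifying uniqueness of the decomposition. An alternative, slightly longer route would be to prove disjointness directly by chasing the equation $Jx_i\beta_{i,\ell} = Jx_{i'}\beta_{i',\ell'}$: distinctness of $i,i'$ follows from the disjointness of double cosets (since $x_{i'}\in Jx_iK$ would force $i=i'$), and then $\beta_{i,\ell}\beta_{i,\ell'}^{-1}\in J^{x_i}\cap K$ forces $\ell=\ell'$; however, using the cardinality proposition already proven is cleaner.
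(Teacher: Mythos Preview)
Your proof is correct and follows essentially the same approach as the paper: both establish the covering by decomposing an arbitrary $h\in H$ as $(jj')x_i\beta_{i,\ell}$ via the double coset and then the $(J^{x_i}\cap K)$-coset, and both appeal to the cardinality count of \autoref{prop:cardinality-counting-double-cosets} to conclude that the cosets are distinct.
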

\begin{proof} By \autoref{prop:cardinality-counting-double-cosets}, the set $\left\{ x_i \beta_{i,\ell} \right\}_{i,\ell}$ gives us the expected number of coset representatives. It thus suffices to check that they are genuine representatives.

Let $h\in H$ be arbitrary. Then there is a unique $x_i$ so that $h$ lies in the double coset $J x_i K$, that is, $h = j x_i k$ for some unique $i$. We may take the element $k$ and see which right coset of $(J^{x_i} \cap K)\backslash K$ it lies in, so there is some unique $\beta_{i,\ell}$ so that $k \in (J^{x_i}\cap K)\beta_{i,\ell}$. Thus there is some unique $j'\in J$ for which $k = x_i^{-1} j' x_i \beta_{i,\ell}$, from which we observe
\begin{align*}
    h = j x_i k &= (jj') x_i \beta_{i,\ell}.
\end{align*}
\end{proof}

\begin{corollary}\label{cor:RH-basis} There is an isomorphism of free $\twR{J}$-modules 
\begin{align*}
    \twR{H} \cong \bigoplus_{i=1}^n \bigoplus_{\ell=1}^{r_i} \twR{J} x_i \beta_{i,\ell}.
\end{align*}
\end{corollary}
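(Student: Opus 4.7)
The plan is to combine the basis result cor:right-coset-reps-give-basis with the coset description just established in prop:RH-basis. Specifically, cor:right-coset-reps-give-basis says that whenever $A \subseteq B$ are subgroups and $\{y_1,\ldots,y_m\}$ is a complete set of right coset representatives for $A \backslash B$, the elements $\{1_R y_1, \ldots, 1_R y_m\}$ form an $\twR{A}$-basis for $\twR{B}$ viewed as a left module via $\rho_A^B$.

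Applying this with $A = J$ and $B = H$, it suffices to exhibit a set of right coset representatives for $J \backslash H$; the previous proposition prop:RH-basis gives exactly such a set, namely the doubly-indexed family $\{x_i \beta_{i,\ell}\}_{1 \le i \le n,\ 1 \le \ell \le r_i}$. Thus the elements $1_R \cdot x_i \beta_{i,\ell}$ form an $\twR{J}$-basis of $\twR{H}$, yielding the direct sum decomposition
\[
    \twR{H} \;\cong\; \bigoplus_{i=1}^{n}\bigoplus_{\ell=1}^{r_i} \twR{J}\cdot x_i \beta_{i,\ell}
\]
as left $\twR{J}$-modules.

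There is essentially no obstacle here: the corollary is a direct combination of the basis statement cor:right-coset-reps-give-basis with the explicit enumeration of right cosets of $J$ in $H$ provided by prop:RH-basis, with the double indexing by $(i,\ell)$ simply reflecting the partition of $H$ by double cosets $Jx_iK$ and then by right cosets $(J^{x_i}\cap K)\beta_{i,\ell}$ of $K$. No additional calculation is required.
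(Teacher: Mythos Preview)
Your proof is correct and matches the paper's own argument essentially verbatim: the paper simply cites \autoref{cor:right-coset-reps-give-basis} for the fact that right coset representatives give a basis, and combines this with the coset decomposition in \autoref{prop:RH-basis}.
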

\begin{proof} As we have seen in \autoref{cor:right-coset-reps-give-basis}, right coset representatives give bases for twisted group rings as free modules. Combining this with \autoref{prop:RH-basis} gives the desired result.
\end{proof}

\begin{corollary}\label{cor:isom-base-rings} There is an isomorphism of left $\twR{J}$-modules
\begin{align*}
    \Res_J^H \Tr_K^H\left( \twR{K} \right) \cong \bigoplus_{x\in [J\backslash H/K]} \Tr_{J\cap {}^{x} K}^J \gamma^x_! \Res_{J^x \cap K}^K \left( \twR{K} \right).
\end{align*}
\end{corollary}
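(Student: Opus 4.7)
The plan is to unwind both sides as explicit free left $\twR{J}$-modules with bases indexed by the double coset data, and then exhibit the isomorphism by matching those bases. For the left-hand side, extension of scalars along the inclusion $\rho_K^H$ carries $\twR{K}$ to $\twR{H}$, and restriction along $\rho_J^H$ simply views the result as a left $\twR{J}$-module, so the LHS is $\twR{H}$ as a left $\twR{J}$-module. Applying \autoref{cor:RH-basis} yields
\begin{align*}
    \Res_J^H \Tr_K^H(\twR{K}) \cong \bigoplus_{i=1}^n \bigoplus_{\ell=1}^{r_i} \twR{J} \cdot x_i \beta_{i,\ell},
\end{align*}
where $\{x_i\}$ is a set of double coset representatives for $J\backslash H/K$ and $\{\beta_{i,\ell}\}_\ell$ is a set of right coset representatives for $(J^{x_i}\cap K)\backslash K$.

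For the $i$-th summand of the right-hand side, I would work from the innermost functor outward. By \autoref{cor:right-coset-reps-give-basis} applied to $J^{x_i}\cap K \subseteq K$, the restriction $\Res_{J^{x_i}\cap K}^K(\twR{K})$ is free on the same coset representatives $\{\beta_{i,\ell}\}$. Viewing this free submodule inside $\twR{G}$ and using that $\gamma^{x_i}$ is the restriction of the global ring automorphism $rg \mapsto \theta_{x_i}(r)\, x_i g x_i^{-1}$ of $\twR{G}$, \autoref{ex:restriction-along-conj-basis} converts this into the free $\twR{J \cap {}^{x_i}K}$-module on the basis $\{x_i\beta_{i,\ell}x_i^{-1}\}$. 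A further extension along $\rho_{J\cap {}^{x_i}K}^J$ leaves the basis unchanged, giving
\begin{align*}
    \Tr_{J\cap {}^{x_i}K}^J \gamma^{x_i}_! \Res_{J^{x_i}\cap K}^K(\twR{K}) \cong \bigoplus_{\ell=1}^{r_i} \twR{J} \cdot (x_i \beta_{i,\ell} x_i^{-1}).
\end{align*}

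Summing over $i$, the RHS becomes a free left $\twR{J}$-module on the basis $\{x_i\beta_{i,\ell}x_i^{-1}\}_{i,\ell}$, whose rank matches that of the LHS by \autoref{prop:cardinality-counting-double-cosets}. The isomorphism can then be realized as the $\twR{J}$-linear extension of $x_i\beta_{i,\ell} \mapsto x_i\beta_{i,\ell}x_i^{-1}$; block by block this is precisely the right-multiplication shift map $\sh_{x_i^{-1}}$ of \autoref{rmk:shift-map}, which is already known to be a left $\twR{J}$-module isomorphism on each summand.

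The only real subtlety is bookkeeping: the basis elements $x_i\beta_{i,\ell}$ and $x_i\beta_{i,\ell}x_i^{-1}$ live a priori in different twisted group rings, and must be identified inside a common ambient ring — most naturally $\twR{H}$ or $\twR{G}$ — in order to be compared. This is transparent at the level of left $\twR{J}$-modules, but will become the crux of the sequel, where this isomorphism must be upgraded to one of $(\twR{J},\twR{K})$-bimodules before an Eilenberg--Watts argument can promote the present calculation to the desired natural isomorphism of functors on all of $\mathcal{P}(\twR{K})$.
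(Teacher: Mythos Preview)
Your proposal is correct and follows essentially the same route as the paper: compute the LHS as $\twR{H}$ with the free $\twR{J}$-basis $\{x_i\beta_{i,\ell}\}$ from \autoref{cor:RH-basis}, unwind each RHS summand through $\Res$, $\gamma^{x_i}_!$, and $\Tr$ to obtain the free $\twR{J}$-module on $\{x_i\beta_{i,\ell}x_i^{-1}\}$, and then match the two via the shift maps of \autoref{rmk:shift-map}. The only cosmetic difference is the direction of the shift: the paper writes $\sh_{x_i}$ from RHS to LHS, while you use its inverse $\sh_{x_i^{-1}}$.
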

\begin{proof} We first see that $\Tr_K^H(\twR{K}) = \twR{H}$. Invoking \autoref{cor:RH-basis}, we see that restriction gives us
\begin{align*}
    \Res_J^H \twR{K} \cong \bigoplus_{i=1}^n \bigoplus_{\ell=1}^{r_i} \twR{J} x_i \beta_{i,\ell}.
\end{align*}

On the other hand, for a fixed $x_i$, we have
\[
    \Res_{J^{x_i}\cap K}^K \twR{K} \cong \bigoplus_{\ell=1}^{r_i} \twR{J^{x_i}\cap K} \beta_{i,\ell}.
\]
Via extension along $\gamma^{x_i}$, our basis elements are conjugated (see \autoref{ex:restriction-along-conj-basis}), so we see that
\begin{align*}
    \gamma^{x_i}_! \left(\oplus_j \twR{J^{x_i}\cap K} \beta_{i,j} \right) \cong \oplus_j \twR{J\cap {}^{x_i} K} x_i \beta_{i,j} x_i^{-1}.
\end{align*}
Finally extending along the inclusion $J\cap {}^{x_i} K \subseteq K$, we have that
\begin{align*}
    \Tr_{J\cap {}^{x_i} K}^J \left( \oplus_j \twR{J\cap {}^{x_i} K} x_i \beta_{i,j} x_i^{-1} \right) \cong \oplus_j \twR{J} x_i \beta_{i,j} x_i^{-1}.
\end{align*}
Taking a direct sum of all such paths, we see that
\begin{align*}
    \bigoplus_{i=1}^n \Tr_{J\cap {}^{x_i} K}^J \gamma^{x_i}_! \Res_{J^{x_i} \cap K}^K \left( \twR{K} \right) \cong \bigoplus_{i=1}^n \bigoplus_{\ell=1}^{r_i} \twR{J} x_i \beta_{i,j} x_i^{-1}.
\end{align*}
One sees that the shift map $s_{x_i} : \twR{J}x_i \beta_{i,j}x_i^{-1} \to \twR{J} x_i \beta_{i,j}$ is an isomorphism of free $\twR{J}$-modules, and therefore a direct sum of shift maps exhibits an isomorphism
\begin{align*}
    \bigoplus_{i=1}^n \left( \oplus_j s_{x_i} \right) :  \bigoplus_{i=1}^n \Tr_{J\cap {}^{x_i} K}^J \gamma^{x_i}_! \Res_{J^{x_i} \cap K}^K \left( \twR{K} \right) \xto{\sim} \Tr_K^H \Res_J^H \left( \twR{K} \right).
\end{align*}
\end{proof}

\begin{proposition}\label{prop:bimodule-homomorphism} We have that the left $\twR{J}$-modules $\Res_J^H \Tr_K^H\left( \twR{K} \right)$ and \\$\bigoplus_{x\in [J\backslash H/K]} \Tr_{J\cap {}^{x} K}^J \gamma^x_! \Res_{J^x \cap K}^K \left( \twR{K} \right)$ inherit a compatible right $\twR{K}$-module structure, which allows us to view them as $(\twR{J}, \twR{K})$-bimodules. Moreover, the shift map exhibiting a left $\twR{J}$-module isomorphism in \autoref{cor:isom-base-rings} is a right $\twR{K}$-module homomorphism as well. Therefore they are isomorphic as bimodules.
\end{proposition}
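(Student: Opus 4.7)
The plan is to make both bimodule structures explicit and then refine the left $\twR{J}$-module isomorphism of \autoref{cor:isom-base-rings} to a bimodule isomorphism. The left-hand side is transparent: since $\twR{K}$ is a $(\twR{K},\twR{K})$-bimodule, extending left scalars along $\rho_K^H$ yields $\twR{H}$ as an $(\twR{H},\twR{K})$-bimodule, and further restricting the left action along $\rho_J^H$ gives the desired $(\twR{J},\twR{K})$-bimodule structure on $\Res_J^H \Tr_K^H(\twR{K}) \cong \twR{H}$, with right $\twR{K}$-action realized simply as right multiplication in $\twR{H}$. On the right-hand side, $\Tr$, $\gamma^{x}_!$, and $\Res$ all act on left-module structures, so the right $\twR{K}$-action on $\twR{K}$ is preserved at every step. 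Using standard identifications for extension of scalars and unitality of the tensor product, each summand unwinds to
\[
M_i := \Tr_{J\cap {}^{x_i}K}^J \gamma^{x_i}_! \Res_{J^{x_i}\cap K}^K(\twR{K}) \;\cong\; \twR{J}\otimes_{\gamma^{x_i},\twR{J^{x_i}\cap K}}\twR{K},
\]
where $\twR{J}$ is viewed as a right $\twR{J^{x_i}\cap K}$-module via $\gamma^{x_i}$. The left $\twR{J}$-action on the left tensorand and the right $\twR{K}$-action on the right tensorand commute, exhibiting each $M_i$ as a $(\twR{J},\twR{K})$-bimodule.

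Next, I would define a candidate bimodule map $\phi_i \colon M_i \to \twR{H}$ by $j \otimes k \mapsto j \cdot x_i \cdot k$, computed inside $\twR{H}$. Well-definedness over the tensor relation reduces to the identity $\gamma^{x_i}(a)\cdot x_i = x_i \cdot a$ in $\twR{H}$ for all $a \in \twR{J^{x_i}\cap K}$, which follows directly from the definition of $\gamma^{x_i}$ and of the twisted multiplication in $\twR{H}$. Granting this, compatibility of $\phi_i$ with both the left $\twR{J}$-action and the right $\twR{K}$-action is immediate from the defining formula.

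Finally, summing yields $\bigoplus_i \phi_i \colon \bigoplus_i M_i \to \twR{H}$, which is already a bimodule map. To see that it is an isomorphism, I would use the decomposition $\twR{K} \cong \bigoplus_\ell \twR{J^{x_i}\cap K}\beta_{i,\ell}$ from \autoref{cor:right-coset-reps-give-basis}: under this, $1 \otimes \beta_{i,\ell}$ in $M_i$ maps to $x_i\beta_{i,\ell}\in \twR{H}$, which by \autoref{prop:RH-basis} is precisely a free left $\twR{J}$-basis of $\twR{H}$. Hence $\bigoplus_i \phi_i$ is already a left $\twR{J}$-module isomorphism agreeing with the shift-map isomorphism of \autoref{cor:isom-base-rings}, so one automatically concludes that the latter also respects the right $\twR{K}$-action. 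The main obstacle will be bookkeeping: one must coherently match the tensor-product description of $M_i$ to the summand-by-summand decomposition by shifted free summands in \autoref{cor:isom-base-rings}, which is subtle because the right $\twR{K}$-action on $\twR{H}$ does not preserve the left $\twR{J}$-coset direct sum decomposition. This technical verification is the content of \autoref{sec:bimodule}.
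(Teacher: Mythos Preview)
Your proposal is correct and takes a genuinely different route from the paper's own argument in \autoref{sec:bimodule}. The paper proceeds through the Eilenberg--Watts framework: it recalls that any right exact coproduct-preserving functor $F$ endows $F(R)$ with a canonical right $R$-action via $F\omega_r$, computes explicitly what $P\omega_{\sum r_ik_i}$ and $Q\omega_{\sum r_ik_i}$ are for $P=\bigoplus_x \Tr\,\gamma^x_!\,\Res$ and $Q=\Res\,\Tr$, and then verifies by direct element-chasing that the shift map $\epsilon$ intertwines these right actions. Your approach instead collapses each summand to a single balanced tensor product $\twR{J}\otimes_{\gamma^{x_i},\,\twR{J^{x_i}\cap K}}\twR{K}$, writes down the map $j\otimes k\mapsto j\,x_i\,k$ which is \emph{manifestly} a $(\twR{J},\twR{K})$-bimodule map once the balancing identity $\gamma^{x_i}(a)\cdot x_i=x_i\cdot a$ is checked, and then argues that this map coincides with the shift map of \autoref{cor:isom-base-rings} by comparing where the basis elements $1\otimes\beta_{i,\ell}$ land.

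What this buys you: your argument is more conceptual and avoids the final elementwise diagram chase, since the bimodule compatibility is built into the definition of $\phi_i$ rather than verified after the fact. What the paper's argument buys: it stays closer to the Watts formalism that is actually invoked in \autoref{lem:Watts}, and it makes the induced right module structures completely explicit, which is what one needs to apply Eilenberg--Watts in the first place. The one point you correctly flag as bookkeeping---that under the paper's identification $M_i\cong\bigoplus_\ell \twR{J}\,x_i\beta_{i,\ell}x_i^{-1}$ from \autoref{ex:restriction-along-conj-basis} the element $1\otimes\beta_{i,\ell}$ corresponds to $x_i\beta_{i,\ell}x_i^{-1}$---is exactly what is needed to conclude that your $\phi_i$ literally equals the shift map rather than merely some isomorphism, and it is straightforward to verify.
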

\begin{proof} Deferred to \autoref{sec:bimodule}.
\end{proof}

\begin{lemma}\label{lem:Watts} Let $P,Q: \mathcal{P}(R) \to \mathcal{P}(S)$ be two exact functors from the category of projective left $R$-modules to the category of projective left $S$-modules. If $P(R) \cong Q(R)$ as $(S,R)$-bimodules, then $P$ and $Q$ are naturally isomorphic.
\end{lemma}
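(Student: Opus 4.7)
The plan is to prove this by an Eilenberg--Watts style argument, adapted to the setting of finitely generated projective modules and exact functors. The key observation is that an exact (hence additive) functor $P: \mathcal{P}(R) \to \mathcal{P}(S)$ is completely determined, up to natural isomorphism, by its value on the free module $R$ together with the inherent right $R$-action on $P(R)$. Given this, the hypothesis $P(R) \cong Q(R)$ as $(S,R)$-bimodules will immediately upgrade to a natural isomorphism $P \cong Q$.

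First I would describe the bimodule structure on $P(R)$. Each $r \in R$ gives a left $R$-linear endomorphism $\rho_r: R \to R$ defined by right multiplication, and applying $P$ produces a left $S$-linear endomorphism $P(\rho_r)$ of $P(R)$. The assignment $r \mapsto P(\rho_r)^{\op}$ endows $P(R)$ with a right $R$-action commuting with the left $S$-action, making $P(R)$ into an $(S,R)$-bimodule. Next, for each $M \in \mathcal{P}(R)$ I would define a comparison map
\[
\eta_M : P(R) \otimes_R M \longrightarrow P(M), \qquad p \otimes m \longmapsto P(\lambda_m)(p),
\]
where $\lambda_m : R \to M$ is the left $R$-linear map sending $r$ to $rm$. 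Verifying that $\eta_M$ is well-defined over the tensor product, $S$-linear, and natural in $M$ is routine from the functoriality of $P$ and the definition of the right $R$-action on $P(R)$.

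For $M = R$, the map $\eta_R$ is the canonical isomorphism $P(R) \otimes_R R \cong P(R)$. Because $P$ is additive --- any exact functor between additive categories preserves the split short exact sequences exhibiting finite biproducts --- $\eta_{R^n}$ is an isomorphism for all $n \geq 0$. For a general finitely generated projective $M$, I would choose an idempotent $e: R^n \to R^n$ whose image is (isomorphic to) $M$. Then $\eta_M$ arises as a retract of $\eta_{R^n}$, via the naturality squares attached to the inclusion and projection splitting $e$, and is therefore itself an isomorphism. Running the same construction on $Q$ yields a natural isomorphism $\mu_M : Q(R) \otimes_R M \xto{\sim} Q(M)$, and combining these with the given bimodule isomorphism $\phi: P(R) \xto{\sim} Q(R)$ produces the desired natural isomorphism
\[
P(M) \xleftarrow{\eta_M^{-1}} P(R) \otimes_R M \xto{\phi \otimes \id} Q(R) \otimes_R M \xto{\mu_M} Q(M).
\]

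The main obstacle is essentially notational bookkeeping: verifying that $\eta$ descends to the tensor product and is $S$-linear requires unwinding how the right $R$-action on $P(R)$ was defined, and the retract argument for general projectives rests on the naturality of $\eta$ applied to the splitting morphisms. Once these checks are in place, the conclusion is immediate.
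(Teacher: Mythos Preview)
Your proposal is correct and takes essentially the same Eilenberg--Watts approach as the paper. The paper invokes the classical Eilenberg--Watts theorem as a black box and argues briefly that it restricts to $\mathcal{P}(R)$, whereas you prove the comparison isomorphism $P(R)\otimes_R - \cong P(-)$ directly on finitely generated projectives via additivity and a retract argument; the underlying mathematics is the same, and your version is arguably more self-contained.
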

\begin{proof} This is a direct corollary of the Eilenberg--Watts theorem \cite{Eilenberg,Watts}, which states that a right exact coproduct-preserving functor between module categories is naturally isomorphic to tensoring with a bimodule. We first remark that the proof of Eilenberg--Watts relies only upon our capacity to obtain free resolutions, so we may restrict our attention to subcategories of projective modules without issue, provided that $P(R)$ and $Q(R)$ are projective in $\Mod(S)$. For any such functors $P$ and $Q$, we have that $P(R)$ and $Q(R)$ obtain natural right $R$-module structures --- this is discussed further in \autoref{sec:bimodule}.

Applying the theorem of Eilenberg and Watts, we have that $P \cong M \otimes_R -$ for some $(S,R)$-bimodule $M$, while $Q \cong N \otimes_R -$, for some $(S,R)$-bimodule $N$. The assumption that there is a bimodule isomorphism $P(R) \cong Q(R)$ implies that $M$ and $N$ are isomorphic as $(S,R)$-bimodules. In particular, there is a natural isomorphism of functors $M \otimes_R - \cong N \otimes_R -$. Combining all these facts, we have a string of natural isomorphisms
\begin{align*}
    P \cong M \otimes_R - \cong N \otimes_R - \cong Q.
\end{align*}
\end{proof}

\begin{corollary}\label{cor:MF6} There is a natural isomorphism of functors
\begin{align*}
    \Res_J^H \Tr_K^H \cong \bigoplus_{x\in [J\backslash H/K]} \Tr_{J\cap {}^{x}K}^J \gamma^x_! \Res_{J^x \cap K}^K
\end{align*}
between the categories of projective modules $\mathcal{P}(\twR{H})$ and $\mathcal{P}(\twR{J})$.
\end{corollary}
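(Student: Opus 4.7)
The plan is to deduce \autoref{cor:MF6} directly from \autoref{lem:Watts} (the Eilenberg--Watts style comparison), once we have collected the two ingredients needed to feed into that lemma: exactness of both functors, and a bimodule isomorphism on the generator $\twR{K}$.

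First I would observe that both $\Res_J^H \Tr_K^H$ and $\bigoplus_{x\in [J\backslash H/K]} \Tr_{J\cap {}^{x}K}^J \gamma^x_! \Res_{J^x \cap K}^K$ are exact functors from $\mathcal{P}(\twR{K})$ to $\mathcal{P}(\twR{J})$. This is immediate from \autoref{prop:our-functors-are-exact}, which shows that each of the three kinds of building-block functors ($\Tr$, $\Res$, $\gamma^x_!$) is exact, together with the fact that finite direct sums preserve exactness.

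Next I would evaluate both functors on $\twR{K}$. By \autoref{prop:bimodule-homomorphism}, both sides carry a natural compatible right $\twR{K}$-module structure, turning them into $(\twR{J},\twR{K})$-bimodules, and the shift-map isomorphism produced in the proof of \autoref{cor:isom-base-rings} is in fact a bimodule isomorphism. Thus the two functors, evaluated at the bimodule $\twR{K}$ (viewed as a left $\twR{K}$-module with its canonical right $\twR{K}$-action), yield isomorphic $(\twR{J},\twR{K})$-bimodules.

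Finally I would apply \autoref{lem:Watts} with $R = \twR{K}$ and $S = \twR{J}$: the lemma states that two exact functors $\mathcal{P}(R) \to \mathcal{P}(S)$ agreeing up to bimodule isomorphism on the free module $R$ are naturally isomorphic. This gives the desired natural isomorphism. The main obstacle here is not in this corollary itself but rather in verifying the right $\twR{K}$-module compatibility asserted in \autoref{prop:bimodule-homomorphism}, which is deferred to \autoref{sec:bimodule}; once that input is granted, the present corollary is a formal consequence. Accordingly my proposed proof is essentially a one-line invocation of \autoref{lem:Watts} applied to the bimodule isomorphism of \autoref{prop:bimodule-homomorphism}, with exactness supplied by \autoref{prop:our-functors-are-exact}.
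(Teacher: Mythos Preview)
Your proposal is correct and follows essentially the same approach as the paper: apply \autoref{lem:Watts} with $R=\twR{K}$ and $S=\twR{J}$, using \autoref{prop:our-functors-are-exact} (and closure under finite direct sums) for exactness, and \autoref{prop:bimodule-homomorphism} (building on \autoref{cor:isom-base-rings}) for the required $(\twR{J},\twR{K})$-bimodule isomorphism on $\twR{K}$. As you note, the substantive work is in \autoref{prop:bimodule-homomorphism}, and the corollary itself is then a direct invocation of \autoref{lem:Watts}.
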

\begin{proof} Apply \autoref{lem:Watts} to the following situation: let $R = \twR{K}$, let $S = \twR{J}$, let $P = \bigoplus_{x\in [J\backslash H/K]} \Tr_{J\cap {}^{x} K}^J \gamma^x_! \Res_{J^x \cap K}^K$, and let $Q = \Res_J^H \Tr_K^H$. Then the assumption that $P(R) \cong Q(R)$ as left $S$-modules is given by \autoref{cor:isom-base-rings}, while their isomorphism as bimodules follows from \autoref{prop:bimodule-homomorphism}. The fact that we can apply the theorem in this setting follows from the fact that all functors considered are exact (\autoref{prop:our-functors-are-exact}), and a direct sum of exact functors is exact as well.
\end{proof}

We are now equipped to prove the main theorem of this section.

\subsection{Proof of \autoref{thm:Mackey-func}}

As stated earlier, we have proven compatibility akin to the axioms for a Mackey functor at the level of exact functors between categories of projective modules, and only up to natural isomorphism. Our goal is then to apply an algebraic $K$-group functor to obtain a genuine Mackey functor. We should first verify that this is something we are allowed to do; i.e. will applying $K_n$ to extension and restriction of scalar functors induce algebraic $K$-theory homomorphisms? It turns out that the conditions we used to ensure that projective modules were preserved suffice to induce abelian group homomorphisms on $K$-groups.

\begin{proposition}\label{prop:induced-K-group-maps} If $f: R \to S$ is a ring homomorphism exhibiting $S$ as a finitely generated projective $R$-module, then extension and restriction of scalars induce group homomorphisms
\begin{align*}
    K_n(f_!) : K_n(R) &\to K_n(S) \\
    K_n(f^\ast) : K_n(S) &\to K_n(R),
\end{align*}
for all $n\ge 0$. For a reference, see for example \cite[IV.6.3.2]{Weibel}.
\end{proposition}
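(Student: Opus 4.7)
The plan is to show that $f_!$ and $f^\ast$ restrict to exact functors between the exact categories $\mathcal{P}(R)$ and $\mathcal{P}(S)$, and then invoke the standard fact that an exact functor of exact categories induces a map of $K$-theory spectra, hence a homomorphism on each homotopy group.

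First I would verify that extension of scalars $f_! = S \otimes_R -$ sends $\mathcal{P}(R)$ to $\mathcal{P}(S)$: a finitely generated projective $R$-module $P$ is a direct summand of some free module $R^n$, and $S \otimes_R -$ sends $R^n$ to $S^n$ and preserves direct summands, so $f_! P$ is a direct summand of $S^n$. Exactness on the exact category $\mathcal{P}(R)$ is automatic because every short exact sequence of projectives splits, and additive functors preserve split short exact sequences. Hence $f_! : \mathcal{P}(R) \to \mathcal{P}(S)$ is an exact functor of exact categories.

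For $f^\ast$, exactness is automatic since restriction of scalars does not change the underlying abelian groups or morphisms, so short exact sequences are preserved (and reflected) by definition. The only nontrivial point is that $f^\ast$ preserves finite generation and projectivity, and this is where the hypothesis that $S$ is a finitely generated projective $R$-module is needed. Given any finitely generated projective $S$-module $Q$, realize $Q$ as a direct summand of some $S^n$. Then $f^\ast S^n = (f^\ast S)^n$ is finitely generated and projective over $R$ by hypothesis, and any direct summand of a finitely generated projective $R$-module is again finitely generated projective. Thus $f^\ast Q \in \mathcal{P}(R)$, and $f^\ast : \mathcal{P}(S) \to \mathcal{P}(R)$ is exact.

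Since both $f_!$ and $f^\ast$ are exact functors between exact categories of finitely generated projective modules, applying the $K$-theory spectrum functor and taking $\pi_n$ yields the claimed group homomorphisms $K_n(f_!)$ and $K_n(f^\ast)$ for every $n \ge 0$. The main (and only nontrivial) step is the preservation of finite generation and projectivity under $f^\ast$, which is precisely where the hypothesis that $S \in \mathcal{P}(R)$ is essential.
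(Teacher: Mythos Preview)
Your argument is correct and is exactly the standard justification: both $f_!$ and $f^\ast$ restrict to exact functors between the split-exact categories $\mathcal{P}(R)$ and $\mathcal{P}(S)$, with the hypothesis on $S$ needed only to ensure $f^\ast$ lands in $\mathcal{P}(R)$. The paper does not supply its own proof of this proposition at all---it simply cites Weibel's \textit{$K$-book} [IV.6.3.2]---so you have filled in precisely the details that the reference provides.
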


Thus for any $n\ge 0$, we have abelian group homomorphisms
\begin{align*}
    K_n(\Tr_K^H) : K_n(\twR{H}) &\to K_n(\twR{K}) \\
    K_n(\Res_K^H) : K_n(\twR{K}) &\to K_n(\twR{H}) \\
    K_n(\gamma^g_!) : K_n(\twR{H}) &\to K_n \left( \twR{{}^{g}H} \right).
\end{align*}

\begin{proof}[Proof of \autoref{thm:Mackey-func}] Let $n\ge 0$ be any integer. Consider the equalities and natural isomorphisms listed in \textbf{(0)}---\textbf{(6)} proved in \autoref{lem:webb-mackey-functor-definition} and \autoref{cor:MF6}. By letting
\begin{align*}
    I_K^H :&= K_n (\Tr_K^H) \\
    R_K^H :&= K_n(\Res_K^H) \\
    c_g :&= K_n(\gamma^g_!),
\end{align*}
one sees by the additivity theorem that these properties become equalities of abelian group homomorphisms, and in particular satisfy the Mackey functor axioms listed in \autoref{def:Mackey-functor}. Thus there is a Mackey functor $\underline{K}_n^G(R)$ whose value on $G/H$ is the algebraic $K$-group $K_n(\twR{H})$, and whose restriction and transfer maps arise from restriction and extension of scalars along ring homomorphisms between twisted group rings.
\end{proof}

\subsection{Comparison with Dress and Kuku's Mackey functor for group rings}\label{subsec:comparison-Kuku}

In \cite{DK-cartan,DK-convenient}, Dress and Kuku proved the existence of a family of Mackey functors defined by taking functors from translation categories of $G$-sets into an exact category $\mathscr{C}$, and then taking their algebraic $K$-theory. In particular when the category $\mathscr{C}$ is a category of projective modules over a ring $R$, this has the interpretation of recovering the algebraic $K$-theory of the group ring $R[G]$. We will provide a short argument that, in the case where $R$ is a ring with trivial action from $G$, our Mackey functor $\underline{K}_n^G(R)$ constructed above agrees with the Mackey functor $K_n(R[G])$ defined by Dress and Kuku. For further detail on the Mackey functor $K_n(R[G])$ and the more general construction behind it, we refer the reader to the excellent exposition found in \cite[Chapter~10]{Kuku-rep-thy}.

If $\mathscr{C}$ is an exact category and $X$ is any $G$-set, then the functor category $\Fun(\und{X}, \mathscr{C})$ is exact, where we understand exactness of natural transformations to mean exactness pointwise \cite[Theorem~10.1.1]{Kuku-rep-thy}. Then by applying $K_n$ for any $n$, we obtain an abelian group $K_n\Fun(\und{X}, \mathscr{C})$. This is denoted by $K_n^G(X, \mathscr{C})$ in the relevant literature.

\begin{theorem}\cite[Theorem~10.1.2]{Kuku-rep-thy} We have that
\begin{align*}
    K_n^G(-, \mathscr{C}) : G\Set &\to \Ab
\end{align*}
is a Mackey functor for all $n\ge 0$.
\end{theorem}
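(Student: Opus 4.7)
The plan is to extend the assignment $X \mapsto \Fun(\und{X}, \mathscr{C})$ to a functor on the Burnside category $B_G$ by constructing restriction and transfer functors at the exact-category level, verifying the Mackey axioms up to natural isomorphism, and then applying $K_n$ to descend to abelian groups. This mirrors the strategy of the paper's proof of \autoref{thm:Mackey-func}.

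For every equivariant map $f: X \to Y$ of finite $G$-sets, I would define two exact functors. The \textit{restriction} $f^\ast : \Fun(\und{Y}, \mathscr{C}) \to \Fun(\und{X}, \mathscr{C})$ is precomposition with the induced $\und{f}: \und{X} \to \und{Y}$; it is exact because exact sequences of natural transformations are pointwise. The \textit{transfer} $f_\ast : \Fun(\und{X}, \mathscr{C}) \to \Fun(\und{Y}, \mathscr{C})$ is the pointwise left Kan extension, $(f_\ast F)(y) := \bigoplus_{x\in f^{-1}(y)} F(x)$; this finite direct sum makes sense because $X$ is a finite $G$-set, and its functoriality in $\und{Y}$ comes from the induced action of morphisms in $\und{Y}$ on fibers. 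Exactness of $f_\ast$ follows since finite direct sums of exact sequences are exact.

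I would then verify the Mackey axioms up to natural isomorphism at the categorical level. Functoriality $(gf)^\ast \cong f^\ast g^\ast$ is immediate, and $(gf)_\ast \cong g_\ast f_\ast$ follows from transitivity of fibers combined with associativity of direct sums; together these handle \textbf{MF0}--\textbf{MF3}. The $G$-equivariance of the maps yields \textbf{MF4} and \textbf{MF5} via conjugation morphisms in $\O_G$. The key axiom \textbf{MF6} comes from a Beck--Chevalley (base change) isomorphism: for any pullback square of finite $G$-sets
\[ \begin{tikzcd} W \ar[r,"p"] \ar[d,"q" left] & X \ar[d,"f" right] \\ Y \ar[r,"g" below] & Z \end{tikzcd} \]
there is a natural isomorphism $g^\ast f_\ast \cong q_\ast p^\ast$, verified pointwise using the canonical identification of the fiber of $q$ over $y$ with the fiber of $f$ over $g(y)$. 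Applied to the double-coset pullback decomposition of \autoref{prop:cardinality-counting-double-cosets}, this recovers \textbf{MF6} exactly.

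The Beck--Chevalley step is the main obstacle, as it requires carefully tracking fiberwise data and verifying that the identifications of fibers intertwine correctly with morphisms in the translation categories on each side. Once this is established, I would apply $K_n$: exact functors induce abelian-group homomorphisms, and by the additivity theorem natural isomorphisms of exact functors become equalities of induced maps. This turns the categorical statements into the axioms of \autoref{def:Mackey-functor}, proving that $K_n^G(-, \mathscr{C})$ is a Mackey functor for every $n \ge 0$.
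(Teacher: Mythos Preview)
The paper does not supply its own proof of this statement: it is quoted verbatim as \cite[Theorem~10.1.2]{Kuku-rep-thy}, and the surrounding text simply refers the reader to \cite{DK-convenient} and \cite{Kuku-rep-thy} for the construction of restriction and transfer. So there is no in-paper argument to compare against.

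That said, your proposal is correct and is essentially the Dress--Kuku construction the paper is citing. Restriction by precomposition along $\und{f}$, transfer by fiberwise direct sum (left Kan extension along $\und{f}$), Beck--Chevalley for the pullback square of finite $G$-sets, and then passage to $K_n$ is exactly the standard route. Your outline also deliberately mirrors the paper's own strategy for \autoref{thm:Mackey-func}, which is appropriate since that theorem is the twisted-group-ring analogue of the result you are proving. One minor remark: invoking the additivity theorem to pass from natural isomorphisms to equalities on $K_n$ is more than you need---a natural isomorphism of exact functors already induces a homotopy of maps of $K$-theory spectra, hence equal maps on homotopy groups---but the paper makes the same slight overstatement, so you are in good company.
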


The description of restriction and transfer maps can be found in detail in \cite{DK-convenient} or \cite{Kuku-rep-thy}. We will be interested, however, in the particular case where $\mathscr{C} = \mathcal{P}(R)$ is the category of projective left $R$-modules. In this context, we have that $\Fun(\und{G/H}, \mathcal{P}(R))$ is equivalent to the category of finitely generated projective left $R[H]$-modules \cite[Theorem~3.2]{DK-convenient}. In particular, the restriction and transfer maps can be understood as restriction and extension of scalars for the associated group rings \cite[Remarks~10.3.1(2)]{Kuku-rep-thy}. From these facts, we conclude the following.

\begin{proposition}\label{prop:comparison-Kuku} Let $G$ be a finite group, and let $R$ be a ring which is viewed as having a trivial $G$-action. Then there is an isomorphism of Mackey functors for all $n\ge 0$
\begin{align*}
    K_n^G(R[G]) \cong \underline{K}_n^G(R),
\end{align*}
where $K_n^G(R[G])$ is the Mackey functor constructed in \cite[\S3]{DK-convenient}, and $\underline{K}_n^G(R)$ is the Mackey functor from \autoref{thm:Mackey-func}.
\end{proposition}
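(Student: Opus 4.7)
The plan is to observe that when $\theta$ is trivial the twisted group ring $R_\theta[H]$ degenerates to the ordinary group ring $R[H]$, and the conjugation homomorphism $\gamma^g \colon R_\theta[H] \to R_\theta[{}^gH]$ degenerates to the standard conjugation $rh \mapsto r\,ghg^{-1}$. So on objects both Mackey functors take the value $K_n(R[H])$ at $G/H$, and the comparison isomorphism should simply be the identity (up to the Dress--Kuku equivalence). The substantive content is to check that the three types of structure maps agree.

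First I would record the levelwise agreement carefully. For $\underline{K}_n^G(R)(G/H) = K_n(R_\theta[H])$ this is immediate from the trivial action hypothesis. For $K_n^G(R[G])(G/H) = K_n\,\Fun(\underline{G/H}, \mathcal{P}(R))$, the Dress--Kuku equivalence \cite[Theorem~3.2]{DK-convenient} produces an exact equivalence $\Fun(\underline{G/H}, \mathcal{P}(R)) \simeq \mathcal{P}(R[H])$, which on $K_n$ gives a canonical isomorphism to $K_n(R[H])$. Composing with the equality $R_\theta[H] = R[H]$ gives the desired isomorphism at each transitive $G$-set.

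Second, I would verify compatibility of the three structure maps under this identification. For projection morphisms $G/H \to G/K$ with $H \subseteq K$, Kuku explicitly identifies the induced restriction map with restriction of scalars along $R[H] \hookrightarrow R[K]$, and the induced transfer with extension of scalars, as noted in \cite[Remarks~10.3.1(2)]{Kuku-rep-thy}; these coincide with the $\rho_H^K$-restriction and $\rho_H^K$-extension functors $\Res_K^H$ and $\Tr_K^H$ defined in Section~\ref{subsec:res-transfer-Mn} (since $\rho_H^K$ is the canonical inclusion when $\theta$ is trivial). For conjugation morphisms $G/H \to G/{}^gH$, the Dress--Kuku construction precomposes a functor $F \colon \underline{G/H} \to \mathcal{P}(R)$ with the translation functor $\underline{G/{}^gH} \to \underline{G/H}$ induced by $x\,{}^gH \mapsto xg\,H$; under the equivalence with module categories this corresponds precisely to extension of scalars along our $\gamma^g$ map. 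Applying $K_n$ yields equality of abelian group homomorphisms in all three cases.

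The main obstacle is bookkeeping: ensuring that the categorical description in terms of functors out of translation categories really does identify with extension/restriction of scalars along $\rho_H^K$ and $\gamma^g$ with our sign/direction conventions, since both constructions implicitly depend on choices of coset representatives. One clean way to package this is to show that the equivalence $\Fun(\underline{G/H}, \mathcal{P}(R)) \simeq \mathcal{P}(R[H])$ is natural with respect to maps in $\widetilde{\mathcal{O}}_G$, so that both Mackey functors arise as $K_n$ applied to the same functor $\widetilde{\mathcal{O}}_G \to \ExCat$; once the natural isomorphisms of functors are in place, the axioms of a morphism of Mackey functors reduce to the equalities of Section~\ref{subsec:res-transfer-Mn} after applying $K_n$.
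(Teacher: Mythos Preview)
Your proposal is correct and follows essentially the same approach as the paper. In fact, the paper does not give a formal proof at all: it simply states the proposition as an immediate consequence of the preceding paragraph, which cites the Dress--Kuku equivalence $\Fun(\underline{G/H}, \mathcal{P}(R)) \simeq \mathcal{P}(R[H])$ \cite[Theorem~3.2]{DK-convenient} and the identification of restriction and transfer with restriction and extension of scalars \cite[Remarks~10.3.1(2)]{Kuku-rep-thy}. Your treatment is more careful, in particular in explicitly addressing the conjugation maps and the coset-representative bookkeeping, but the underlying argument is the same.
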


\section{Comparison with equivariant algebraic $K$-theory}\label{sec:comparison}
Given a $G$-Waldhausen category $\mathscr{C}$, one may give the assignment $G/H \mapsto K \left( \mathscr{C}^{hH} \right)$ the structure of a Waldhausen spectral Mackey functor, that is, the structure of a $\mathcal{B}_G^\Wald$-module. We will refer to such an object as a \textit{homotopy fixed points Waldhausen spectral Mackey functor}. Malkiewich and Merling constructed restriction and transfer functors for such objects prior to taking $K$-theory, and then proved that their restriction and transfer maps satisfied the necessary axioms required to specify a spectral Mackey functor \cite[4.11]{MM}.

Suppose that $\mathscr{C} = \mathcal{P}(R)$ is the category of finitely generated projective modules over a ring $R$, where $G$ acts on $\mathcal{P}(R)$ via an action of $G$ on the ring $R$. Then we have shown that one can define transfer and restriction functors on the categories of homotopy fixed points as extension and restriction of scalars along homomorphisms between twisted group rings. Let $\mathbf{K}_G(R)$ denote the data of the assignment $G/H \mapsto K(R_\theta[H])$, equipped with restriction, transfer, and conjugation functors as defined in \autoref{subsec:res-transfer-Mn}. If we can prove that our transfers and restrictions agree up to natural isomorphism with those constructed by Malkiewich and Merling, then we will have proven that $\mathbf{K}_G(R)$ is a Waldhausen spectral Mackey functor which is equivalent to the homotopy fixed points Waldhausen spectral Mackey functor for $\mathcal{P}(R)$.

\begin{theorem}\label{thm:equivalence-Mackey-functors} Suppose that $G$ is a finite group with an action on $R$, and $|G|^{-1}\in R$. Then we have that $\mathbf{K}_G(R)$ is a $\mathcal{B}_G$-module, and is equivalent to the $\mathcal{B}_G$-module $G/H \mapsto K(\mathcal{P}(R))^{hH}$ as defined by Malkiewich and Merling.
\end{theorem}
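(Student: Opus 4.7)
The strategy is to transport the Malkiewich--Merling spectral Mackey functor structure on $G/H \mapsto K(\mathcal{P}(R)^{hH})$ through the equivalence of exact categories
\[ \Phi_H : \mathcal{P}(R)^{hH} = \Fun(\mathcal{E}H, \mathcal{P}(R))^H \xto{\sim} \mathcal{P}(\twR{H}) \]
from \autoref{cor:descent-to-proj-modules-subcat}, and then identify the transported restriction, transfer, and conjugation functors with our $\Res_K^H$, $\Tr_K^H$, and $\gamma^g_!$ up to natural isomorphism. Since naturally isomorphic exact functors induce the same maps on $K$-theory, this identification upgrades our restriction, transfer, and conjugation data on the spectra $K(\twR{H})$ to the structure of a Waldhausen spectral Mackey functor, and simultaneously exhibits an equivalence with the one of Malkiewich--Merling.

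The first step is to check the identification on restrictions. For $H\subseteq K$, the Malkiewich--Merling restriction $\mathcal{P}(R)^{hK} \to \mathcal{P}(R)^{hH}$ is induced by precomposition with the inclusion $\mathcal{E}H \hookrightarrow \mathcal{E}G$, sending an $H$-equivariant functor to its underlying $K$-equivariant functor. Under $\Phi_H$ and $\Phi_K$, a module over $\twR{K}$ corresponds to a $\twR{K}$-structure on some $R$-module $M$, and forgetting part of the equivariance is exactly forgetting from a $\twR{K}$-action to a $\twR{H}$-action along $\rho_H^K$. Thus the transported restriction functor is canonically isomorphic to $\Res_K^H = (\rho_H^K)^\ast$.

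The second step, and the main obstacle, is identifying the transfer. Malkiewich--Merling's transfer $\mathcal{P}(R)^{hH} \to \mathcal{P}(R)^{hK}$ is defined by a coinduction-type formula (left Kan extension along $\mathcal{E}H \hookrightarrow \mathcal{E}K$, or equivalently summing over coset representatives) applied to an $H$-equivariant functor. The plan is to compute this explicitly on objects: given a functor $F : \mathcal{E}H \to \mathcal{P}(R)$ corresponding under $\Phi_H$ to an $\twR{H}$-module $M$, one can check that its transferred $K$-equivariant functor corresponds under $\Phi_K$ to $\bigoplus_{[xH] \in K/H} x\cdot M$ with the natural $\twR{K}$-action, which is naturally isomorphic to $\twR{K} \otimes_{\twR{H}} M = (\rho_H^K)_! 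M = \Tr_K^H M$. This comparison should be a direct unwinding using the shift-map description of coset representatives (compare \autoref{prop:pi-gives-free-module} and \autoref{cor:right-coset-reps-give-basis}); care is required because both sides involve non-canonical choices of coset representatives, but any two such choices produce naturally isomorphic functors, which is enough.

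For conjugation, the Malkiewich--Merling $c_g : \mathcal{P}(R)^{hH} \to \mathcal{P}(R)^{h({}^g\!H)}$ is induced by conjugation on translation categories $\mathcal{E}H \to \mathcal{E}({}^gH)$ combined with the $g$-action on $\mathcal{P}(R)$ coming from $\theta_g$; tracing through $\Phi_H$ and $\Phi_{{}^gH}$ yields precisely the twisted $R$-module structure $m \mapsto \theta_g(r)\cdot m$ with conjugated group action, which is the definition of extension along $\gamma^g : \twR{H} \to \twR{{}^gH}$. With the three types of functors identified up to natural isomorphism, \autoref{prop:induced-K-group-maps} together with additivity of $K$-theory shows that the induced maps on $K$-theory spectra agree with the Malkiewich--Merling maps up to homotopy. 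Since the Malkiewich--Merling construction is already a $\mathcal{B}_G^{\Wald}$-module, and thus by \autoref{cor:GB-and-GBWald-equiv} a $\mathcal{B}_G$-module, this transports along the equivalences $\Phi_H$ to the same structure on $\mathbf{K}_G(R)$, proving both that $\mathbf{K}_G(R)$ is a $\mathcal{B}_G$-module and that it is equivalent to equivariant algebraic $K$-theory.
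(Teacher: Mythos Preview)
Your overall strategy matches the paper's: transport the Malkiewich--Merling structure through the equivalences $\Phi_H$, check that restriction, transfer, and conjugation agree up to natural isomorphism, and then invoke \autoref{cor:GB-and-GBWald-equiv}. The treatment of restriction and conjugation is essentially the same as in \autoref{lem:res-functors-compatible} and \autoref{lem:conjugation} (modulo a small slip: for $H\subseteq K$ you should precompose with $\mathcal{E}H \hookrightarrow \mathcal{E}K$, sending a $K$-equivariant functor to an $H$-equivariant one, not the other way around). One point you elide is that the Malkiewich--Merling functors are defined on the model $\Fun(G/H\times \mathcal{E}G,\mathscr{C})^G$ rather than $\Fun(\mathcal{E}H,\mathscr{C})^H$; the paper spends some effort (\autoref{prop:res-functor-compatibility}, \autoref{prop:conj-agree-on-fixed-points}) passing between these, and for conjugation this only commutes up to natural isomorphism and only after a choice of coset representatives.

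The genuine divergence is in the transfer step. You propose a direct computation: unwind the Malkiewich--Merling coinduction formula, identify the resulting $\twR{K}$-module as $\bigoplus_{xH\in K/H} x\cdot M$, and match it with $\twR{K}\otimes_{\twR{H}} M$ using the coset basis. This can be made to work, but it requires tracking the $\twR{K}$-action through the choices of coset representatives and then checking naturality in $M$. The paper instead avoids all of this with a formal adjoint argument (\autoref{prop:square-proposition}): once restriction is known to commute with the horizontal equivalences, uniqueness of left adjoints forces the two transfers to agree up to natural isomorphism, since both $f_!$ (Malkiewich--Merling) and $\Tr_K^H=(\rho_H^K)_!$ are left adjoint to the corresponding restrictions. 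This is substantially cleaner and sidesteps the coset bookkeeping you flag as requiring care; it also handles the conjugation-transfer case simultaneously. Your route is more concrete but more laborious; the paper's buys the result essentially for free from the restriction comparison.
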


As a corollary, we can compute the homotopy Mackey functors of equivariant algebraic $K$-theory in terms of the Mackey functors $\underline{K}_n^G(R)$ constructed in \autoref{thm:Mackey-func}, whose transfers and restrictions admit a tractable algebraic description.

\begin{corollary}\label{cor:KGn-computes-pi-n-KGR} We have that $\underline{K}_n^G(R)$ is the $n$th homotopy Mackey functor of the equivariant algebraic $K$-theory spectral Mackey functor.
\end{corollary}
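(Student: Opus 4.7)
The plan is to deduce this corollary as an essentially formal consequence of \autoref{thm:equivalence-Mackey-functors} together with the Bohmann--Osorno extraction of a classical Mackey functor from a spectral one (to be set up in \autoref{subsec:mf-from-smf}). The upshot is that there is nothing substantial left to prove once one unpacks the definitions correctly, so the argument should fit in a single paragraph.

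First, I would invoke \autoref{thm:equivalence-Mackey-functors} to replace the Malkiewich--Merling equivariant algebraic $K$-theory $\mathcal{B}_G$-module by its equivalent model $\mathbf{K}_G(R)$, noting that equivalent spectral Mackey functors have isomorphic homotopy Mackey functors in every degree. Next, I would recall the general recipe (following Bohmann--Osorno) for producing $\underline{\pi}_n$ of a spectrally enriched functor $\mathbf{M} : \mathcal{B}_G \to \Sp$: postcomposing with $\pi_n : \Sp \to \Ab$ yields an additive functor $B_G \to \Ab$ whose value on $G/H$ is $\pi_n \mathbf{M}(G/H)$ and whose restriction, transfer, and conjugation morphisms are obtained by applying $\pi_n$ to the corresponding morphisms of hom-spectra. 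This is the $n$th homotopy Mackey functor $\underline{\pi}_n \mathbf{M}$.

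Finally, I would apply this recipe to $\mathbf{M} = \mathbf{K}_G(R)$. On objects one obtains $\pi_n K(R_\theta[H]) = K_n(R_\theta[H])$, matching $\underline{K}_n^G(R)(G/H)$. On morphisms, the spectral structure maps of $\mathbf{K}_G(R)$ are by construction (see \autoref{subsec:res-transfer-Mn} together with the comparison in the proof of \autoref{thm:equivalence-Mackey-functors}) the $K$-theory functor applied to the exact functors $\Tr_K^H$, $\Res_K^H$, and $\gamma^g_!$. Applying $\pi_n$ therefore produces precisely the abelian group homomorphisms $K_n(\Tr_K^H)$, $K_n(\Res_K^H)$, and $K_n(\gamma^g_!)$ that were used to define $\underline{K}_n^G(R)$ in the proof of \autoref{thm:Mackey-func} (see also \autoref{prop:induced-K-group-maps}). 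Matching these identifications on objects and morphisms gives $\underline{\pi}_n \mathbf{K}_G(R) = \underline{K}_n^G(R)$ as Mackey functors, which is the statement of the corollary.

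The only potential obstacle is bookkeeping: one must be careful that the Bohmann--Osorno recipe really extracts restriction, transfer, and conjugation in a way compatible with how these morphisms were identified in the spectral Burnside category $\mathcal{B}_G$ when constructing $\mathbf{K}_G(R)$. This is not a genuine difficulty, however, since both sides are simply $\pi_n$ applied to the same morphisms of hom-spectra, and \autoref{thm:equivalence-Mackey-functors} has already done the work of matching those spectral morphisms (up to natural isomorphism of exact functors) with extension and restriction of scalars along our ring homomorphisms between twisted group rings.
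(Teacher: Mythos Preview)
Your proposal is correct and follows essentially the same approach as the paper: invoke \autoref{thm:equivalence-Mackey-functors} to pass to $\mathbf{K}_G(R)$, then apply the Bohmann--Osorno extraction of homotopy Mackey functors (set up in \autoref{subsec:mf-from-smf}) and observe that $\underline{\pi}_n \mathbf{K}_G(R) = \underline{K}_n^G(R)$. The paper's treatment is terser than yours, simply recording the identification $\underline{K}_n^G(R) = \underline{\pi}_n \mathbf{K}_G(R)$ as an immediate consequence of the definitions.
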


\subsection{The homotopy fixed points Waldhausen spectral Mackey functor}

For this section, let $\mathscr{C}$ be a $G$-category with arbitrary coproducts.

\begin{definition}\label{def:MM-restriction}\cite[4.7]{MM} Suppose $f: S \to T$ is a map of finite $G$-sets. Then we have a \textit{restriction functor}
\begin{align*}
    f^\ast : \Fun(T \times \mathcal{E}G, \Mod(R))^G &\to \Fun(S \times \mathcal{E}G, \Mod(R))^G \\
    F &\mapsto \left(f^\ast F : (s,g) \mapsto F(f(s),g) \right) \\
    f^\ast F \left( s, g\to g' \right) &= F(f(s), g\to g'),
\end{align*}
and on $\alpha: F \Rightarrow F'$ by the formula
\begin{align*}
    (f^\ast F)(s,g) = F(f(s),g) \xto{\alpha} F'(f(s),g) = (f^\ast F')(s,g).
\end{align*}
\end{definition}

\begin{definition}\label{def:MM-transfer}\cite[4.7]{MM} Let $f: S \to T$ be a map of finite $G$-sets. Then we have a \textit{transfer functor}
\begin{align*}
    f_! : \Fun(S\times \mathcal{E}G, \mathscr{C})^G \to \Fun(T\times \mathcal{E}G, \mathscr{C})^G
\end{align*}
is given by sending $F \mapsto f_! F$, where
\begin{align*}
    (f_! F)(t,g) = g \left( \bigoplus_{i\in f^{-1} \left( g^{-1}t \right)} F(i,e) \right).
\end{align*}
\end{definition}

In particular when $S \to T$ is a map between transitive $G$-sets, these definitions provide restriction and transfer maps between homotopy fixed point categories.

\begin{proposition} \cite[4.11]{MM} The restriction and transfer functors defined above for transitive $G$-sets turn $K(\mathscr{C}^{hH})$ into a spectral Mackey functor.
\end{proposition}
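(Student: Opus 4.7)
The plan is to show that the assignment $G/H \mapsto K(\mathscr{C}^{hH})$, together with the restriction and transfer functors of \autoref{def:MM-restriction} and \autoref{def:MM-transfer}, assembles into a spectrally enriched functor out of $\mathcal{B}_G^\Wald$; by \autoref{cor:GB-and-GBWald-equiv} this will be equivalent to specifying a $\mathcal{B}_G$-module, i.e.\ a spectral Mackey functor. The first step is to verify that both $f^\ast$ and $f_!$ are \emph{exact} functors of Waldhausen categories when $f$ is a map of finite $G$-sets. Exactness of $f^\ast$ is nearly formal because it is defined pointwise: cofibrations, weak equivalences, and pushouts in $\Fun(S\times \mathcal{E}G,\mathscr{C})^G$ are all computed pointwise in $\mathscr{C}$, so precomposition with $f\times \id$ trivially preserves them. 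For $f_!$ one has to check that the indexed coproduct formula of \autoref{def:MM-transfer} preserves cofibration sequences; this reduces to the fact that finite coproducts in a Waldhausen category preserve cofibrations and pushouts along cofibrations.

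Next I would establish functoriality: $\id^\ast = \id = \id_!$, and for composable maps $f,g$ of finite $G$-sets one has natural isomorphisms $(gf)^\ast \cong f^\ast g^\ast$ and $(gf)_! \cong g_! f_!$. The restriction case is immediate, while the transfer case reduces to rewriting $\bigoplus_{i\in (gf)^{-1}(t)}$ as an iterated coproduct $\bigoplus_{j\in g^{-1}(t)}\bigoplus_{i\in f^{-1}(j)}$ and keeping track of the $G$-equivariance introduced by the formula $g\bigl(\bigoplus_{i\in f^{-1}(g^{-1}t)}F(i,e)\bigr)$. After applying $K_0$ (or a $K$-theory spectrum functor) these natural isomorphisms become genuine equalities of maps of spectra.

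The heart of the argument is then the Beck--Chevalley compatibility: for any pullback square of finite $G$-sets
\[
\begin{tikzcd}
W \ar[r,"g'"]\ar[d,"f'" left] \pb & S\ar[d,"f"]\\
T\ar[r,"g" below] & U
\end{tikzcd}
\]
there is a natural isomorphism $g^\ast f_! \cong f'_! (g')^\ast$ of functors between the appropriate fixed-point categories. Concretely, $(g^\ast f_! F)(t,h) = h\bigoplus_{i\in f^{-1}(h^{-1}g(t))} F(i,e)$, and one identifies $f^{-1}(h^{-1}g(t))$ with the fiber $(f')^{-1}(h^{-1}t)$ under the pullback. This will let us assign to any span $S \xleftarrow{f} W \xrightarrow{g} T$ the functor $g_! f^\ast$, and show that composition of spans by pullback corresponds under $K$-theory to composing these functors. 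Together with exactness of $f^\ast$ and $f_!$, this extends the assignment $G/H \mapsto K(\mathscr{C}^{hH})$ to a functor out of the span category, and then out of $\mathcal{B}_G^\Wald$ once one group-completes and adds the disjoint-union structure (which on the category side is a direct sum of functors, hence becomes a coproduct/product of spectra after $K$-theory by additivity).

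The main obstacle, and the reason this result requires careful bookkeeping in \cite{MM}, is the coherence of these natural isomorphisms. One must upgrade the pointwise Beck--Chevalley iso and the additivity splitting from naive natural isomorphisms to \emph{spectrally enriched} functoriality; this means enhancing the construction from a 1-functor on $G$-sets to a spectrally enriched functor on $\mathcal{B}_G^\Wald$, which in turn requires showing that disjoint unions of $G$-sets really go to biproducts of spectra (an additivity statement) and that all compositions of restrictions and transfers agree strictly enough to produce the required spectral enrichment. In particular, a Mackey decomposition formula analogous to \autoref{cor:MF6} must be established for the general functors $f^\ast$, $f_!$ via a double-coset pullback of finite $G$-sets; this is the technical core of \cite[Proposition~4.11]{MM}, and is what I would cite rather than reproduce in full.
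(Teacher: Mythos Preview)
The paper does not prove this proposition at all; it is simply stated with the citation \cite[4.11]{MM} and no argument is given. Your proposal correctly identifies the ingredients one would need (exactness of $f^\ast$ and $f_!$, functoriality, the Beck--Chevalley condition for pullback squares of finite $G$-sets, and the coherence needed to upgrade to a spectrally enriched functor on $\mathcal{B}_G^\Wald$), and you yourself conclude by saying you would cite \cite[Proposition~4.11]{MM} rather than reproduce the argument. So your approach and the paper's agree: this is a result quoted from Malkiewich--Merling, not re-proved here.
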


In the setting where $\mathscr{C} = \mathcal{P}(R)$, our goal will be to compare the restriction and transfer maps defined above with those given by restriction and extension of scalars along ring homomorphism between twisted group rings. As we may see, it suffices to define restriction and transfers on the orbit category, and therefore it suffices to consider \autoref{def:MM-restriction} and \autoref{def:MM-transfer} in the cases where we have a projection map $G/H \to G/K$ for a subgroup $H \subseteq K$, or a conjugation map $G/H \to G/{}^{g}H$.

Our goal will be to first compare restriction along a projection $G/H \to G/K$ in the sense of \autoref{def:MM-restriction} with restriction of scalars along $\rho_H^K$. We will also compare restriction along a conjugation function $c_x: G/H \to G/{}^{x}H$ with restriction of scalars along $\gamma^x: R_\theta[H] \to R_\theta[{}^{x}H]$. As every morphism in the orbit category is a composite of projection and conjugation, this will imply that our definitions of restriction are compatible up to natural isomorphism.

To show that transfers are compatible, we will remark that the definitions of restriction and transfer given in \autoref{def:MM-restriction} and \autoref{def:MM-transfer} are adjoint, as are restriction and extension of scalars. By some categorical trickery, transfers will then agree up to natural isomorphism. 

\subsection{Restriction on fixed point subcategories}

In the case where $S$ and $T$ are transitive $G$-sets, the restriction and transfer maps defined above are between categories of the form $\Fun(G/H \times \mathcal{E}G, \mathscr{C})^G$. This is not our working definition of homotopy fixed points, as we were using $\mathscr{C}^{hH} = \Fun(\mathcal{E}H, \mathscr{C})^G$, although we had an explicit equivalence between these two in \autoref{prop:comparison-htpy-fixed-point-definitions}. However in order to relate these restriction and transfer maps to those we build on categories of modules over twisted group rings, we will want to use the equivalence of the twisted group ring module category with homotopy fixed points as in \autoref{prop:htpy-fixed-points-equivalent-module-cat-over-tw-grp-ring}. Thus we will want to rewrite the restriction and conjugation functors of Malkiewich and Merling using our working definition of homotopy fixed points categories.

\begin{definition}\label{def:intermediate-res-subgp} For $f: G/H \to G/K$ associated to a subgroup inclusion $H \subseteq K$, we define the \textit{restriction functor}
\begin{align*}
    f^\ast : \Fun(\mathcal{E}K, \mathscr{C})^K &\to \Fun(\mathcal{E}H, \mathscr{C})^H
\end{align*}
to be precomposition with the canonical functor $\mathcal{E}H \hookto \mathcal{E}K$.
\end{definition}
\begin{proof} We must see these are functorial and well-defined. For restriction, we remark that if $F: \mathcal{E}K \to \mathscr{C}$ is a $K$-equivariant functor, then it is necessarily $H$-equivariant as $H \subseteq K$. Thus to see that the composite $\mathcal{E}H \to \mathcal{E}K \to \mathscr{C}$ is $H$-equivariant, it suffices to observe that the canonical inclusion $\mathcal{E}H \to \mathcal{E}K$ is $H$-equivariant. Verifying that $K$-equivariant natural transformations are send to $H$-equivariant natural transformations is immediate as well.
\end{proof}

\begin{proposition}\label{prop:res-functor-compatibility} The diagram of restriction functors commutes
\begin{equation}\label{eqn:restriction-comparison}
\begin{tikzcd}[ampersand replacement=\&]
    \Fun(G/H\times \mathcal{E}G, \Mod(R))^G \rar["\Phi_H" above, "\sim" below] \& \Fun(\mathcal{E}H, \Mod(R))^H\\
    \Fun(G/K\times \mathcal{E}G, \Mod(R))^G\rar["\sim" above, "\Phi_K" below]\uar["f^\ast" left] \& \Fun(\mathcal{E}K, \Mod(R))^K\uar["f^\ast" right]
\end{tikzcd}
\end{equation}
\end{proposition}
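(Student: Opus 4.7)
The plan is to verify commutativity by direct computation: chase an arbitrary object $F \in \Fun(G/K \times \mathcal{E}G, \Mod(R))^G$ (and later an arbitrary $G$-equivariant natural transformation) around both paths of the diagram and observe that the results are equal on the nose, rather than merely naturally isomorphic. This is possible because $\Phi_H$ and $\Phi_K$ are defined by evaluation at the identity coset, and the projection $f: G/H \to G/K$ sends $eH \mapsto eK$.

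First, I would unfold the top-right path. Applying $\Phi_K$ to $F$ produces the $K$-equivariant functor $F(eK,-):\mathcal{E}K \to \Mod(R)$, and then applying the restriction functor of \autoref{def:intermediate-res-subgp} precomposes with the canonical inclusion $\mathcal{E}H \hookrightarrow \mathcal{E}K$, yielding the $H$-equivariant functor
\[
\mathcal{E}H \hookrightarrow \mathcal{E}K \xrightarrow{F(eK,-)} \Mod(R),\quad g \mapsto F(eK,g).
\]
Next I would unfold the bottom-left path. The Malkiewich--Merling restriction $f^*$ from \autoref{def:MM-restriction} produces $f^*F \in \Fun(G/H \times \mathcal{E}G, \Mod(R))^G$ with $(f^*F)(s,g) = F(f(s),g)$, and then $\Phi_H$ evaluates this at $eH$, giving the functor $g \mapsto (f^*F)(eH,g) = F(f(eH),g) = F(eK,g)$. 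Comparing the two expressions, the outputs agree as functors $\mathcal{E}H \to \Mod(R)$.

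I would then carry out the analogous verification on morphisms. Given a $G$-equivariant natural transformation $\alpha: F \Rightarrow F'$, the top-right path produces the natural transformation whose component at $g \in \mathcal{E}H$ is $\alpha_{(eK,g)}: F(eK,g) \to F'(eK,g)$, and the bottom-left path produces the natural transformation whose component at $g$ is $(f^*\alpha)_{(eH,g)} = \alpha_{(f(eH),g)} = \alpha_{(eK,g)}$. These literally agree, so the diagram commutes strictly.

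The only subtlety worth highlighting is bookkeeping: I should make sure that $F(eK,-)$ lands in the $K$-equivariant subcategory (it does, because $F$ is $G$-equivariant and the stabilizer of $eK$ is $K$), and similarly that $f^*F$ lands in the $G$-equivariant subcategory for $G/H \times \mathcal{E}G$, so that both vertical arrows in the diagram are well defined on the objects obtained along the way. There is no hard step in the proof; the content is simply that the two evaluation-and-restrict procedures can be carried out in either order.
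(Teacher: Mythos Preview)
Your proposal is correct and follows essentially the same approach as the paper: both arguments chase an object $F$ and then a natural transformation around the square, using that $\Phi_H$ and $\Phi_K$ are evaluation at the identity coset and that $f(eH)=eK$, to conclude that the two composites literally coincide. Your version adds a small remark on equivariance bookkeeping, but otherwise the proofs are the same direct verification.
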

\begin{proof} We begin with $F \in \Fun(G/K\times \mathcal{E}G, \Mod(R))^G$, and see that $\Phi_K(F) \in \Fun(\mathcal{E}K, \Mod(R))^K$ sends $k \mapsto F(eK,k)$, while $f^\ast\Phi_K(F) \in \Fun(\mathcal{E}H, \Mod(R))^H$ sends $h \mapsto F(eK,h)$.

For the other direction, we see $\Phi_H(f^\ast F)$ sends $h \mapsto F(f(eH),h) = F(eK,h)$. Thus the diagram commutes on objects.

Given a $G$-natural transformation $\eta: F \Rightarrow F'$ in $\Fun(G/K \times \mathcal{E}G, \mathscr{C})^G$, we see that $f^\ast \eta$ is of the form $F(f(-),-) \Rightarrow F'(f(-),-)$ by definition. One sees then that $\Phi_H f^\ast \eta$ is the restriction $F(f(eH),-) \Rightarrow F'(f(eH),-)$, which is precisely equal to $F(eK,-) \Rightarrow F'(eK,-)$. One can see that this is the same as $f^\ast \Phi_K \eta$.
\end{proof}

Thus defining restriction on homotopy fixed points for a projection morphism in such a way that it agrees with \autoref{def:MM-restriction} was rather straightforward. Doing an analogous procedure for conjugation morphisms is more difficult, and we will have to use this non-canonical inverse dependent on coset representatives in order to handle this situation. This non-canonical characteristic of our conjugation morphisms will produce agreement only up to natural isomorphism, although this is good enough for our purposes.

\begin{definition}\label{def:conj-on-fixed-points} Let $G/H$ be a transitive $G$-set, let $x\in G$ be arbitrary, and let $g_1, \ldots, g_n$ be a choice of right coset representatives for $G/H$. Then we define a \textit{conjugation functor} $c_x^\ast(g_1, \ldots, g_n)$, depending on the choice of coset representatives, by
\begin{align*}
    c_x^\ast(g_1, \ldots, g_n) : \Fun \left( \mathcal{E} {}^{x}H, \mathscr{C} \right)^{{}^{x}H} &\to \Fun(\mathcal{E}H, \mathscr{C})^H \\
    F &\mapsto \left[ h \mapsto x^{-1}\cdot \left( \til{F}(xh) \right) \right],
\end{align*}
and on morphisms by sending $\eta: F \Rightarrow F'$ to the natural transformation whose component at $y$ is $x^{-1}\cdot \left( \widetilde{\eta}_{x gy} \right)$. We will see that this is a functor via the proof contained in the following proposition.
\end{definition}

\begin{proposition}\label{prop:conj-agree-on-fixed-points} We have that $c_x^\ast(g_1, \ldots, g_n)$ fits into a commutative diagram with the restriction map $c_x^\ast$:
\[ \begin{tikzcd}
    \Fun(G/{}^{x}H \times \mathcal{E}G,\mathscr{C})^G\rar["c_x^\ast" above] & \Fun(G/H\times \mathcal{E}G, \mathscr{C})^G\dar["\sim" right]\\
    \Fun(\mathcal{E} {}^{x}H, \mathscr{C})^{{}^{x}H}\rar[dashed,"{c_x^\ast(g_1, \ldots, g_n)}" below]\uar["\sim" left] & \Fun(\mathcal{E}H, \mathscr{C})^H.
\end{tikzcd} \]
Here the right vertical map is the equivalence from \autoref{prop:comparison-htpy-fixed-point-definitions}, where the left vertical map is the coset-dependence equivalence in \autoref{prop:invs-to-categorical-equivalence-htpy-fixed-pts}.
\end{proposition}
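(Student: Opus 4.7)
The plan is to verify the commutativity by direct diagram-chasing: start with an arbitrary object $F \in \Fun(\mathcal{E}{}^{x}H, \mathscr{C})^{{}^{x}H}$ at the bottom-left, compute its image under both the dashed map $c_x^\ast(g_1, \ldots, g_n)$ and under the composite going up-right-down, and confirm that both yield the same $H$-equivariant functor in $\Fun(\mathcal{E}H, \mathscr{C})^H$; then repeat for natural transformations. Before doing so, I would note that the given right coset representatives $\{g_1, \ldots, g_n\}$ for $H\backslash G$ induce right coset representatives $\{xg_1, \ldots, xg_n\}$ for ${}^{x}H\backslash G$, since $G = \bigcup_i H g_i$ implies $G = xG = \bigcup_i (xHx^{-1})(xg_i)$. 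With this choice, the lift $\widetilde{F}: \mathcal{E}G \to \mathscr{C}$ of $F$ appearing in the definition of $c_x^\ast(g_1, \ldots, g_n)$ is the same lift used by the equivalence $\Psi_{{}^{x}H}$ from \autoref{prop:invs-to-categorical-equivalence-htpy-fixed-pts}, eliminating any ambiguity between the two sides.

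First I would chase $F$ through the up-right-down path. Applying $\Psi_{{}^{x}H}$ yields the $G$-equivariant functor on $G/{}^{x}H \times \mathcal{E}G$ sending $(y\cdot{}^{x}H, g') \mapsto y \cdot \widetilde{F}(y^{-1} g')$. Applying $c_x^\ast$ from \autoref{def:MM-restriction}, which is precomposition in the first variable with $c_x: yH \mapsto yx^{-1}\cdot{}^{x}H$, produces the $G$-equivariant functor $(yH, g') \mapsto yx^{-1} \cdot \widetilde{F}(xy^{-1} g')$ on $G/H \times \mathcal{E}G$. Finally, $\Phi_H$ from \autoref{prop:comparison-htpy-fixed-point-definitions} evaluates this at the identity coset in the first variable, yielding the $H$-equivariant functor $h \mapsto x^{-1} \cdot \widetilde{F}(xh)$, which is precisely the formula defining $c_x^\ast(g_1, \ldots, g_n)(F)$ on objects. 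Running the identical computation with a natural transformation $\eta : F \Rightarrow F'$ in place of $F$, and tracking the component at $(eH, y)$ for $y \in H$, produces the component $x^{-1} \cdot \widetilde{\eta}_{xy}$, matching the morphism assignment of $c_x^\ast(g_1, \ldots, g_n)$. Since both composites agree on objects and morphisms, they are equal as functors; this simultaneously confirms that $c_x^\ast(g_1, \ldots, g_n)$ is a genuine functor with the correct $H$-equivariance, by transport of structure along the vertical equivalences.

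I do not expect a real obstacle here beyond careful bookkeeping, since the proof is essentially a tautological unpacking of the three formulas defining $\Psi_{{}^{x}H}$, $c_x^\ast$, and $\Phi_H$. The one subtle point worth flagging is that choosing a different system of representatives for ${}^{x}H \backslash G$ on the left-hand side would produce a functor differing from $c_x^\ast(g_1, \ldots, g_n)$ by a natural isomorphism rather than an equality; however, with the canonical choice $\{xg_i\}$ induced from $\{g_i\}$, the diagram commutes strictly. This is consistent with the earlier observation in the text that the coset-dependence introduces at worst natural isomorphism ambiguity, which suffices for the subsequent applications.
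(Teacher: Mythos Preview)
Your proposal is correct and follows essentially the same approach as the paper: both chase an object $F$ (and then a natural transformation $\eta$) clockwise from the bottom-left corner through $\Psi_{{}^{x}H}$, $c_x^\ast$, and $\Phi_H$, arriving at the formula $h \mapsto x^{-1}\cdot \widetilde{F}(xh)$ and the component $x^{-1}\cdot \widetilde{\eta}_{xh}$. Your explicit remark that the representatives $\{g_i\}$ for $H\backslash G$ induce representatives $\{xg_i\}$ for ${}^{x}H\backslash G$, so that the same lift $\widetilde{F}$ is used on both sides, is a clarification the paper leaves implicit; otherwise the arguments are identical.
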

\begin{proof} We verify that the conjugation functor in \autoref{def:conj-on-fixed-points} is indeed a functor and agrees with this diagram above by simply tracing objects and morphisms clockwise through the diagram and verifying that this agrees with what we called conjugation. On objects, we see that we have
\[ \begin{tikzcd}
    {(g{}^{x}H,\widetilde{g}) \mapsto g\cdot \left( \widetilde{F}(g^{-1} \widetilde{g}) \right)}\rar[maps to] & {(gH, \widetilde{g}) \mapsto gx^{-1}\cdot \left( F(xg \widetilde{g}) \right)}\dar[maps to] \\
    F\uar[maps to]\rar[dashed, maps to] & {h\mapsto x^{-1}\cdot \left(\widetilde{F}(xh)\right)}.
\end{tikzcd} \]
On morphisms, we see that
\[ \begin{tikzcd}
    {\til{\eta}_{(g{}^{x}H, \widetilde{g})} := g \cdot \left( \widetilde{\eta}_{g^{-1} \widetilde{g}} \right)} \rar[maps to] & {\left( c_x^\ast\eta \right)_{(gH, \widetilde{g})} = \widetilde{\eta}_{c_x(gH, \widetilde{g})} = \widetilde{\eta}_{gx^{-1} {}^{x}H, \widetilde{g}} }\dar[maps to]\\
    \eta: F \Rightarrow F'\rar[maps to,dashed]\uar[maps to] & {\left( c_x^\ast \eta \right)_{(eH, \widetilde{g})} = \widetilde{\eta}_{x^{-1} {}^{x}H, \widetilde{g}} = x^{-1} \cdot \left( \widetilde{\eta}_{x \widetilde{g}}  \right) }.
\end{tikzcd} \]
\end{proof}

\subsection{Comparison of restriction} We verify that restriction along projection and conjugation morphisms in the orbit category agree in the sense of \cite{MM} and in the sense of restriction of scalars along $\rho_H^K$ and $\gamma^x$.

\begin{lemma}\label{lem:res-functors-compatible} For $f: G/H \to G/K$ a quotient map associated to an inclusion of subgroups $H \subseteq K$, the diagram commutes
\[ \begin{tikzcd}
    \Fun(G/H\times \mathcal{E}G, \Mod(R))^G \rar["\Phi_H" above, "\sim" below] & \Fun(\mathcal{E}H, \Mod(R))^H\rar["\Psi_H" above, "\sim" below] & \Mod_{\twR{H}}\\
    \Fun(G/K\times \mathcal{E}G, \Mod(R))^G\rar["\sim" above, "\Phi_K" below]\uar["f^\ast" left] & \Fun(\mathcal{E}K, \Mod(R))^K\uar["f^\ast" right]\rar["\Psi_K" below, "\sim" above] & \Mod_{\twR{K}}\uar["\Res_H^K" right],\\
\end{tikzcd} \] 
\end{lemma}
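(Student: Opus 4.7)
The plan is to observe that the diagram in the lemma naturally factors as two squares, and that the left-hand square was already established in \autoref{prop:res-functor-compatibility}. So the work reduces to verifying commutativity of the right-hand square
\[ \begin{tikzcd}
    \Fun(\mathcal{E}H, \Mod(R))^H \rar["\Psi_H" above, "\sim" below] & \Mod_{\twR{H}}\\
    \Fun(\mathcal{E}K, \Mod(R))^K \rar["\Psi_K" below, "\sim" above]\uar["f^\ast" left] & \Mod_{\twR{K}}\uar["\Res_H^K" right].
\end{tikzcd} \]
This is essentially a diagram chase on objects and morphisms using the explicit formula for $\Psi$ from \autoref{prop:htpy-fixed-points-equivalent-module-cat-over-tw-grp-ring} and the definition of $\rho_H^K$ as the inclusion $rh \mapsto rh$ of twisted group rings.

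For objects, let $F : \mathcal{E}K \to \Mod(R)$ be a $K$-equivariant functor. Traveling right then up, $\Psi_K(F)$ is the underlying abelian group $M := F(e)$ with $\twR{K}$-action $(rk)\cdot m = r\cdot F(e,k)(m)$, and $\Res_H^K$ records the same module $M$ where $\twR{H}$ acts via $\rho_H^K$; thus $rh \in \twR{H}$ acts by $r \cdot F(e,h)(m)$. Traveling up then right, $f^\ast F$ is the precomposition of $F$ with $\mathcal{E}H \hookto \mathcal{E}K$, so $\Psi_H(f^\ast F)$ has underlying abelian group $F(e)$ and action $(rh) \cdot m = r \cdot F(e,h)(m)$ for $h \in H$. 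These $\twR{H}$-module structures agree on the nose.

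For morphisms, let $\eta : F \Rightarrow F'$ be a $K$-equivariant natural transformation. Going right then up gives $\Res_H^K \Psi_K(\eta) = \eta_e$ regarded as a map of $\twR{H}$-modules, while going up then right gives $\Psi_H(f^\ast \eta) = (f^\ast \eta)_e = \eta_e$. These are literally equal. Combining this with the already-proven commutativity of the left square establishes the lemma. I expect no serious obstacle here; this is a bookkeeping argument that simply records how the equivalences $\Psi_H$, $\Psi_K$ and the structure map $\rho_H^K$ are set up to be compatible by design.
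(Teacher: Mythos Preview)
Your proposal is correct and follows essentially the same approach as the paper: both invoke \autoref{prop:res-functor-compatibility} for the left square, then verify the right square by tracing an object $F$ and a morphism $\eta$ around both paths, observing that each yields the module $F(e)$ with the restricted $\twR{H}$-action and the map $\eta_e$ respectively.
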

\begin{proof} By \autoref{prop:res-functor-compatibility}, the left square commutes, so it suffices to verify the right square agrees.

Suppose $F\in \Fun(\mathcal{E}K, \Mod(R))^K$. Then $\Psi_K(F)$ is the module $M:= F(e) \in \Mod(R)$ with twisted group action given by the morphisms $F(e\to k)$. Under $\Res_H^K$, we simply restrict of the action to the image of morphisms of the form $F(e\to h)$ where $h\in H$.

For the other direction, we see $(f^\ast F)(e) = F(f(e)) = F(e)$, so we obtain the same $R$-module $M = F(e)$. Moreover, for any $e \to h$ in $\mathcal{E}H$, it maps to $e \to h$ in $\mathcal{E}K$. That is, under $\Psi_H(f^\ast F)$ we obtain the same twisted module structure on $M$ as in $\Res_H^K\Psi_K(F)$. Thus the diagram commutes on objects.

Given any natural transformation $\eta: F \Rightarrow F'$ in $\Fun(\mathcal{E}K, \mathscr{C})^K$, we have that $\Psi_K(\eta) = \eta_e : F(e) \to F'(e)$, and that $\Res_H^K$ sends this morphism of $\twR{K}$-modules to the morphism $\eta_e : F(e) \to F'(e)$, viewed as an $\twR{H}$-module homomorphism under restriction of scalars. For the other direction, we see that $f^\ast\eta$ is given by whiskering $\eta$ with $f$, and that $\Psi_H(f^\ast\eta)$ is precisely $(\eta\circ f)_e : (f(e)) \to F'(f(e))$, which is $\eta_e$. Thus the diagram commutes on morphisms.
\end{proof}

\begin{lemma}\label{lem:conjugation} Let $H \subseteq G$ be a subgroup, and $x\in G$ arbitrary, so that we have a conjugation morphism $G/H \to G/{}^{x}H$. When the coset representatives $g_i$ are chosen so that $x=g_1$, the diagram commutes up to natural isomorphism 
\begin{equation}\label{eqn:conjugation}
\begin{aligned}
    \begin{tikzcd}[ampersand replacement=\&]
    {\Fun(G/{}^{x}H \times \mathcal{E}G,\mathscr{C})^G}\dar["c_x^\ast"]\rar["\sim"] \& {\Fun(\mathcal{E} {}^{x}H, \Mod(R))^{{}^{x}H}} \dar["{c_x^\ast(g_1, \ldots, g_n)}" left]\rar["\sim"] \& {\Mod_{\twR{{}^{x}H}}} \dar["{c_x^\ast}"]\\
    {\Fun(G/H\times \mathcal{E}G, \mathscr{C})^G}\rar["\sim" below] \& {\Fun(\mathcal{E}H, \Mod(R))^H}\rar["\sim" below] \& {\Mod_{\twR{H}} }
\end{tikzcd}
\end{aligned}
\end{equation}

where $c_x^\ast$ is restriction of scalars along the ring homomorphism $\twR{H} \to \twR{{}^{x}H}$.
\end{lemma}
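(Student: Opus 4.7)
The plan is to verify the commutativity of the two squares separately. The left square, with vertical maps $c_x^\ast$ and $c_x^\ast(g_1,\ldots,g_n)$, is exactly the content of \autoref{prop:conj-agree-on-fixed-points}; the hypothesis that $x = g_1$ is included in our chosen coset representatives ensures the lift $\tilde{F}$ appearing in the formula for $c_x^\ast(g_1,\ldots,g_n)$ is well-defined at $x$ itself. It therefore remains to exhibit a natural isomorphism between the two composites in the right square, namely between $\Psi_H \circ c_x^\ast(g_1,\ldots,g_n)$ and $(\gamma^x)^\ast \circ \Psi_{{}^xH}$, where $(\gamma^x)^\ast$ is restriction of scalars.

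I would start with $F \in \Fun(\mathcal{E}{}^xH, \Mod(R))^{{}^xH}$, set $M := F(e)$, and for each $k \in {}^xH$ write $\alpha_k : M \to M$ for the underlying abelian group automorphism of the $R$-linear morphism $F(e\to k): M \to k\cdot M$, which is $\theta_k$-semilinear. Going right then down, $\Psi_{{}^xH}(F)$ equips $M$ with the $\twR{{}^xH}$-module structure in which $rk$ acts as $r \cdot \alpha_k$; then restriction along $\gamma^x$ turns $rh \in \twR{H}$ into $\gamma^x(rh) = \theta_x(r)\cdot xhx^{-1}$, yielding the formula $rh \cdot m = \theta_x(r)\cdot\alpha_{xhx^{-1}}(m)$. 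Going down then right, the hypothesis $x = g_1$ ensures $\tilde{F}(x) = F(e) = M$ and $\tilde{F}(xh) = F(xhx^{-1}) = (xhx^{-1})\cdot M$, so $F'(e) = x^{-1}\cdot M$ and the morphism $F'(e \to h) = x^{-1}\cdot F(e \to xhx^{-1})$ has underlying abelian group map equal to $\alpha_{xhx^{-1}}$ (by \autoref{def:action-on-module-cat}, the $G$-action on $\Mod(R)$ preserves underlying maps of morphisms). Applying $\Psi_H$ yields the $\twR{H}$-module structure on the abelian group $M$ in which $rh$ acts as $r*_{x^{-1}\cdot M}\alpha_{xhx^{-1}}(m)$.

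The two $\twR{H}$-module structures on $M$ are then identified via the natural comparison $x^{-1}\cdot M \cong M$ arising from the $G$-action on $\Mod(R)$, which is the identity on underlying abelian groups. Naturality of this identification in $F$ is automatic, since both composites send a morphism $\eta: F \Rightarrow F'$ to the same underlying group homomorphism $\eta_e: M \to M'$, which is a module map for both $\twR{H}$-module structures simultaneously.

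The main obstacle will be the careful bookkeeping of how the $G$-action on $\Mod(R)$ interacts with the restriction-of-scalars functor along $\gamma^x$, particularly in tracking the twisting of the $R$-action by $\theta_x$ versus $\theta_{x^{-1}}$ on the two sides, and verifying that the natural isomorphism built from the $G$-action on $\Mod(R)$ precisely reconciles them. The hypothesis $x = g_1$ is essential here: it removes the coset-representative ambiguity in $\tilde{F}$, so that $\tilde{F}(x)$ equals $F(e)$ on the nose, and the comparison isomorphism can be written explicitly rather than merely up to a further choice.
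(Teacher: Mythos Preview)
Your proposal is correct and follows essentially the same approach as the paper: invoke \autoref{prop:conj-agree-on-fixed-points} for the left square, then for the right square use the hypothesis $x=g_1$ to rewrite $\tilde{F}(xh)=F(xhx^{-1})$, compute both $\twR{H}$-module structures on the underlying abelian group $M=F(e)$, and identify them via the fact that $x^{-1}\cdot M$ and $M$ coincide as abelian groups, with naturality following since both composites send $\eta$ to the underlying map $\eta_e$. Your phrasing is in places slightly more careful than the paper's own (you explicitly track the $R$-action $*_{x^{-1}\cdot M}$ where the paper writes an ambiguous ``$r\cdot-$''), but the argument is the same.
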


\begin{proof} By \autoref{prop:conj-agree-on-fixed-points}, the left square commutes up to natural isomorphism, so it suffices to check the right square. Let $F \in \Fun(\mathcal{E} {}^{x}H, \Mod(R))^{{}^{x}H}$. Then its image in $\Mod_{{}^{x}H}[{}^{x}H]$ is given by the module $M:= F(e)$ equipped with the following $\twR{{}^{x}H}$-module structure
\begin{align*}
    \twR{{}^{x}H} &\to \End_\Ab(M) \\
    ry &\mapsto \left( M \xto{F(e,y)} M \xto{r\cdot-} M \right).
\end{align*}
Under restriction of scalars, it is sent to the $\twR{H}$-module $M$, with action given by
\begin{equation}\label{eqn:RHH-action-1}
\begin{aligned}
    \twR{H} &\to \End_\Ab(M) \\
    rh &\mapsto \left( M \xto{F(e,xhx^{-1})} M \xto{r\cdot-} M \right).
\end{aligned}
\end{equation}

Conversely, let $\Psi\in \Fun(\mathcal{E}H, \Mod(R))^H$ be $c_x^\ast(g_1, \ldots, g_n)(F)$, that is,
\begin{align*}
    \Psi(h) = x^{-1}\til{F}(xh),
\end{align*}
where $F$ is extended to a functor $\til{F}$ on $\mathcal{E}G$ by selection of coset representatives. A priori, there is no good way to relate $\til{F}(xh)$ to $\til{F}(x)$, since one must write $xh = yg_i$ for some $y \in {}^{x}H$ and $g_i$ a coset representative, and then express $\til{F}(xh) := F(y)$. However, we remark that \textit{we can choose our coset representatives}. To that end, since $|G:{}^{x}H|\ge 2$, we may select $g_1 = x$, and let $g_2, \ldots, g_n$ be an arbitrary selection of representatives for the remaining cosets. This has the following advantage: we can write
\begin{align*}
    xh &= (xhx^{-1})x,
\end{align*}
and then one sees that since $xhx^{-1} \in {}^{x}H$, and $x$ is a coset representative, we have that $\til{F}(xh) = F(xhx^{-1})$ for all $h \in H$. In particular, one remarks that $\til{F}(x) = F(e)$. We therefore see that $\Psi$ takes the following form:
\begin{align*}
    \Psi(h) = x^{-1}F(xhx^{-1}).
\end{align*}
We see that $\Psi$ is sent to the abelian group $\Psi(e) = x^{-1}F(xex^{-1}) = x^{-1}F(e)$, and since $x^{-1}F(e)=F(e)=:M$ by assumption, we have that $\Psi \mapsto M$. Thus \autoref{eqn:conjugation} produces the same abelian group.

To see that the $\twR{H}$-module structures on $M$ are the same, we see that the one produced by $\Psi$ yields

\begin{equation}\label{eqn:RHH-action-2}
\begin{aligned}
     \twR{H} &\to \End_\Ab(M) \\
    rh &\mapsto \left( M \xto{\Psi(e,h)} M \xto{r\cdot-} \right).
\end{aligned}
\end{equation}

We must see that Equations~\autoref{eqn:RHH-action-1} and \autoref{eqn:RHH-action-2} produce the same $\twR{H}$-module structure on $M$. It suffices to see that $\Psi(e,h)$ and $F(e,xhx^{-1})$ agree as abelian group homomorphisms for every $h\in H$. We see that $\Psi(e,h)$ is the image under $\Psi$ of the unique map $e\to h$ in $\mathcal{E}H$, and that, as a morphism in $\Mod(R)$, it is a map between $x^{-1}\til{F}(xe) = x^{-1}M$, and $x^{-1}\til{F}(xh) = x^{-1}F(xhx^{-1})$. That is, it is precisely the map $x^{-1} \left( F(e,xhx^{-1}) \right)$. One sees therefore that the diagram of abelian group homomorphisms commutes
\[ \begin{tikzcd}
    M\rar["x^{-1}\cdot-" above]\ar[dr,"{\Psi(e,h)}" below left] & M\dar["{F(e,xhx^{-1})}" right]\\
     & M.\\
\end{tikzcd} \]
As $x^{-1}$ is the identity as an abelian group homomorphism by definition, we see that $\Psi(e,h) = F(e,xhx^{-1})$ in $\Hom_\Ab(M, F(xhx^{-1}))$, and therefore that \autoref{eqn:RHH-action-1} and \autoref{eqn:RHH-action-2} produce the same $\twR{H}$-action on $M$.

We now verify that \autoref{eqn:conjugation} agrees on morphisms. Let $\eta: F \Rightarrow F'$ be an arbitrary ${}^{x}H$-equivariant natural transformation between functors $F,F' \in \Fun(\mathcal{E}{}^{x}H, \Mod(R))^{{}^{x}H}$. Then we have that $c_x^\ast(g_1, \ldots, g_n)(\eta)$ is given by, for $h \in \mathcal{E}H$, the component 
\[ x^{-1} \cdot\left( \widetilde{\eta}_{xh} \right) : x^{-1} \cdot \left( \til{F}(xh) \right) \to x^{-1} \cdot \left( \widetilde{F'}(xh)\right).\]
Under the equivalence $\Fun(\mathcal{E}H, \Mod(R))^H \xto{\sim} \Mod_{\twR{H}}$, we send this to its component at the identity $e\in H$, which is of the form (by recalling that $\widetilde{F}(x) = F(e)$ and then that $\widetilde{\eta}_x = \eta_e$):
\begin{align*}
    x^{-1}\cdot \left( \eta_e \right) : x^{-1} \left( F(e) \right).
\end{align*}
Conversely, under the functor $\Fun(\mathcal{E}{}^{x}H, \Mod(R))^{{}^{x}H} \to \Mod_{\twR{{}^{x}H}}$, we see that $\eta$ is sent to $\eta_e : F(e) \to F'(e)$. Under the conjugation functor $\gamma_x^\ast:\Mod_{\twR{{}^{x}H}} \to \Mod_{\twR{H}}$, this is sent to $\eta_e$, viewed as an $\twR{H}$-module homomorphism. This isn't equal on the nose to the morphism $x^{-1}\eta_e$, however we remark that, by post-composition with the $\twR{H}$-module isomorphism $x^{-1}\cdot -$, these morphisms agree. Thus the diagram commutes up to natural isomorphism.
\end{proof}

\subsection{Comparison of transfers}

\begin{lemma}\label{lem:tr-functors-compatible} Let $f: G/H \to G/K$ be a morphism in the orbit category.
\begin{enumerate}
    \item If $f$ is a projection morphism for $H \subseteq K$, then $f_!$ agrees with $\Tr_H^K$ up to natural isomorphism.
\[ \begin{tikzcd}
    \Fun(G/H \times \mathcal{E}G, \mathcal{P}(R))^G \rar["\sim"]\dar["f_!" left] & \mathcal{P}(R_\theta[H])\dar["\Tr_H^K" right]\\
    \Fun(G/K \times \mathcal{E}G, \mathcal{P}(R))^G\rar["\sim" below] & \mathcal{P}(R_\theta[K]).
\end{tikzcd} \]

    \item If $f$ is a conjugation morphism, that is, $K = {}^{g}H$, then $f_!$ agrees with $\gamma^g_!$ up to natural isomorphism.
    \[ \begin{tikzcd}
    \Fun(G/H \times \mathcal{E}G, \mathcal{P}(R))^G \rar["\sim"]\dar["f_!" left] & \mathcal{P}(R_\theta[H])\dar["\gamma^g_!" right]\\
    \Fun(G/{}^{g}H\times \mathcal{E}G, \mathcal{P}(R))^G\rar["\sim" below] & \mathcal{P}(R_\theta[{}^{g}H]).
\end{tikzcd} \]

\end{enumerate}
\end{lemma}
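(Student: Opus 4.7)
The plan is to exploit the uniqueness of left adjoints, leveraging the work already done on restriction functors. We have shown in \autoref{lem:res-functors-compatible} and \autoref{lem:conjugation} that, after transporting along the equivalences of \autoref{prop:htpy-fixed-points-equivalent-module-cat-over-tw-grp-ring}, the restriction functor $f^\ast$ of \autoref{def:MM-restriction} agrees up to natural isomorphism with restriction of scalars $\Res_H^K$ (when $f$ is a projection $G/H \to G/K$) or with the conjugation restriction $c_g^\ast$ along $\gamma^g$ (when $f$ is a conjugation $G/H \to G/{}^gH$). Since any equivalence of categories transports adjoint pairs to adjoint pairs, it will suffice to identify both $f_!$ of \autoref{def:MM-transfer} and $\Tr_H^K$ (resp.\ $\gamma^g_!$) as left adjoints to these restriction functors; uniqueness of left adjoints up to natural isomorphism will then finish the job.

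First I would verify that $f_! \dashv f^\ast$ in the setup of \cite{MM}. Unwinding the formula
\[
    (f_! F)(t,g) \;=\; g \cdot \biggl( \bigoplus_{i\in f^{-1}(g^{-1}t)} F(i,e) \biggr),
\]
one recognizes $f_!$ as a left Kan extension along $f \times \id_{\mathcal{E}G}$, pointwise computed by a coproduct over the fibers. The ordinary Kan extension adjunction descends to the fixed-point subcategories because the explicit formula is visibly $G$-equivariant in $(t,g)$ and the unit and counit can be inspected to be $G$-natural. On the algebraic side, extension of scalars along any ring homomorphism is standardly left adjoint to restriction of scalars, giving $\Tr_H^K \dashv \Res_H^K$; for conjugations the ring homomorphism $\gamma^g$ is in fact an isomorphism, so $\gamma^g_!$ is an equivalence with inverse $(\gamma^g)^\ast = c_g^\ast$, and in particular is left adjoint to $c_g^\ast$. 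These adjunctions descend to finitely generated projective modules by \autoref{prop:our-functors-are-exact}, using that $|G|^{-1} \in R$ to invoke the projective descent of \autoref{cor:descent-to-proj-modules-subcat}.

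With the two adjunctions in hand, the argument concludes formally. Writing $\Xi_H : \Fun(G/H \times \mathcal{E}G, \mathcal{P}(R))^G \xto{\sim} \mathcal{P}(\twR{H})$ for the composite equivalence $\Psi_H \circ \Phi_H$, the previously established lemmas give a natural isomorphism $\Xi_H \circ f^\ast \cong \Res_H^K \circ \Xi_K$ (and analogously for $c_g^\ast$ in the conjugation case). Passing to left adjoints on both sides of this natural isomorphism, and using that $\Xi_H$ and $\Xi_K$ are equivalences (hence their own quasi-inverses serve as both left and right adjoints), yields a natural isomorphism $\Xi_K \circ f_! \cong \Tr_H^K \circ \Xi_H$ (respectively $\Xi_{{}^gH} \circ f_! \cong \gamma^g_! \circ \Xi_H$), which is precisely the commutativity of the two diagrams in the statement.

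The main obstacle is the first step: verifying carefully that the pointwise coproduct formula of \autoref{def:MM-transfer} really is left adjoint to $f^\ast$ at the level of $G$-fixed-point functor categories, rather than only at the level of the ambient functor categories. Once this piece of \cite{MM} is made explicit, the rest of the proof is a formal consequence of adjunction uniqueness transported along the equivalences of \autoref{prop:htpy-fixed-points-equivalent-module-cat-over-tw-grp-ring}, together with the comparison lemmas for restriction already established in the preceding subsections.
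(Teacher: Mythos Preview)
Your proposal is correct and follows essentially the same strategy as the paper: the paper invokes a general categorical proposition (\autoref{prop:square-proposition}) stating that a square of right adjoints commuting up to natural isomorphism, with two sides equivalences, yields a corresponding square of left adjoints commuting up to natural isomorphism, and then applies it using the restriction-comparison results of \autoref{lem:res-functors-compatible} and \autoref{lem:conjugation}. Your argument is exactly this unpacked, and you correctly identify that the only nontrivial ingredient not already established is the adjunction $f_! \dashv f^\ast$ on the $G$-fixed functor categories, which the paper likewise takes as input from \cite{MM}.
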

This follows as a particular case of the following, more general proposition.

\begin{proposition}\label{prop:square-proposition} Suppose we have a diagram of functors which commutes (up to natural isomorphism)
\[ \begin{tikzcd}
    {A}\rar["p^\ast" above]\dar["r^\ast" left] & {B}\dar["s^\ast" right]\\
    {C}\rar["q^\ast" below] & {D}'
\end{tikzcd} \]
so that each functor admits a left adjoint, which we decorate with a lower shriek, and moreover assume that $p^\ast$ and $q^\ast$ are equivalences of categories. Then the diagram
\[ \begin{tikzcd}
    {A}\rar["p^\ast"] & {B}\\
    {C}\uar["r_!" left]\rar["q^\ast" below] & {D}\uar["s_!" right]
\end{tikzcd} \]
commutes up to natural isomorphism.
\end{proposition}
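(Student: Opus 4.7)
The plan is to exploit the 2-functoriality of the assignment ``take the left adjoint'' and the fact that, for an equivalence, the left adjoint is a quasi-inverse. Starting from the hypothesis that $s^\ast \circ p^\ast \cong q^\ast \circ r^\ast$ as functors $A \to D$, I will pass to left adjoints and then ``cancel'' the equivalences $p^\ast$ and $q^\ast$ on the appropriate sides, producing the desired natural isomorphism $p^\ast \circ r_! \cong s_! \circ q^\ast$ of functors $C \to B$.

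The first step is to apply the ``mate correspondence'' (equivalently, the fact that passing to left adjoints is a pseudofunctor from the 2-category of adjunctions to its 1-cell dual): a natural isomorphism $\alpha : F \cong G$ between parallel functors with left adjoints induces a natural isomorphism $\alpha^L : G^L \cong F^L$, and composition is reversed in the sense that $(FG)^L \cong G^L F^L$. Applying this to the hypothesized isomorphism gives
\[ p_! \circ s_! \;\cong\; (s^\ast p^\ast)^L \;\cong\; (q^\ast r^\ast)^L \;\cong\; r_! \circ q_! \]
as functors $D \to A$.

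The second step is to invoke that $p^\ast$ and $q^\ast$ are equivalences. Consequently the adjunctions $p_! \dashv p^\ast$ and $q_! \dashv q^\ast$ are adjoint equivalences, so both unit and counit are natural isomorphisms; in particular $p^\ast p_! \cong \id_B$ and $q_! q^\ast \cong \id_C$. Combining this with the previous step yields the chain
\[ p^\ast \circ r_! \;\cong\; p^\ast \circ r_! \circ q_! \circ q^\ast \;\cong\; p^\ast \circ p_! \circ s_! \circ q^\ast \;\cong\; s_! \circ q^\ast, \]
which is the claimed natural isomorphism.

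There is no real obstacle here; the one point requiring care is that the ``mate'' of a natural isomorphism between compositions of adjoints is itself a natural isomorphism, which is a standard (and purely formal) fact about pasting of adjunctions. Once this is granted, the proof is a short diagram-chase using that equivalences cancel from either side up to natural isomorphism, and applying the Proposition to \autoref{lem:conjugation} and \autoref{lem:res-functors-compatible} will yield \autoref{lem:tr-functors-compatible} as stated.
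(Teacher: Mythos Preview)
Your proof is correct and follows essentially the same approach as the paper: pass to left adjoints using uniqueness/functoriality of adjoints, then cancel the equivalences $p^\ast$ and $q^\ast$ via their units and counits being isomorphisms. The paper phrases the first step as establishing $r^\ast \cong q_! s^\ast p^\ast$ and then invoking uniqueness of adjoints to obtain $r_! \cong p_! s_! q^\ast$ directly (using the ambidexterity $q^\ast \dashv q_! \dashv q^\ast$), whereas you invoke the mate correspondence to get $p_! s_! \cong r_! q_!$ and then compose with $q^\ast$; these are the same argument in slightly different packaging.
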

\begin{proof} We first see that the intermediate diagram
\[ \begin{tikzcd}
    {A}\rar["p^\ast"]\dar["r^\ast" left] & {B}\dar["s^\ast" right]\\
    {C} & {D}\lar["q_!" below]
\end{tikzcd} \]
commutes up to natural isomorphism. Indeed, this is witnessed by the following composite (where $\epsilon$ is the counit associated to the adjunction $q_! q^\ast \to \id$, which is a natural isomorphism as $q^\ast$ is an equivalence of categories):
\begin{align*}
    q_!(s^\ast p^\ast) &\cong q_!(q^\ast r^\ast) \xto{\epsilon} r^\ast.
\end{align*}

We then claim that $r_! \cong p_! s_! q^\ast$. This follows from uniqueness of adjoints, as $r^\ast \cong q_! s^\ast p^\ast$, and $q_! s^\ast p^\ast$ admits a left adjoint, given by $p_! s_! q^\ast$ (since $g$ is an equivalence, we have that $q^\ast \dashv q_! \dashv q^\ast$). Combining this with the natural isomorphism $p^\ast p_! \xto{\sim} \id$ given by $p^\ast$ being an equivalence of categories, we have that
\begin{align*}
    p^\ast r_! \cong p^\ast p_! s_! q^\ast \xto{\sim} s_! q^\ast.
\end{align*}
\end{proof}

\subsection{Proof of \autoref{thm:equivalence-Mackey-functors}}

We first state a concise corollary of the work in the previous few sections.

\begin{corollary}\label{cor:res-tr-conj-agree-projective} Let $G$ be finite, and $|G|^{-1}\in R$. Then restriction, transfer and conjugation for module categories of twisted group rings agree with restriction, transfer and conjugation for homotopy fixed points up to natural isomorphism. Moreover, this compatibility holds on the subcategories of finitely generated projective modules.
\end{corollary}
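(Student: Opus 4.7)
The plan is to assemble the three comparison lemmas proved earlier in \autoref{sec:comparison} into a single statement, and then descend to the subcategory of finitely generated projective modules using \autoref{cor:descent-to-proj-modules-subcat}. Since a module over $\mathcal{B}_G^{\Wald}$ is determined by its behavior on the orbit category (via the two inclusions $\O_G, \O_G^{\op} \hookrightarrow B_G$), and every morphism in $\O_G$ factors as a conjugation morphism followed by a projection morphism, it suffices to verify compatibility on these generating classes.

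For restriction, I would treat the two cases separately. For a projection morphism $f: G/H \to G/K$ associated to an inclusion $H \subseteq K$, \autoref{lem:res-functors-compatible} already identifies $f^\ast$ with $\Res_K^H$ on the nose via the equivalences $\Phi$ and $\Psi$. For a conjugation morphism $c_x: G/H \to G/{}^{x}H$, \autoref{lem:conjugation} supplies a natural isomorphism between $c_x^\ast$ and restriction of scalars along $\gamma^x$, provided one picks the coset representatives of ${}^{x}H$ in $G$ starting with $g_1 = x$. This is the sole source of the ``up to natural isomorphism'' caveat on the restriction side, and it is unavoidable because the equivalence $\mathcal{E}G \xto{\sim} \mathcal{E}H$ of \autoref{rmk:EH-EG} is only canonical after such a choice.

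For transfers, and for conjugation viewed as extension of scalars along the ring isomorphism $\gamma^g$, I would appeal to \autoref{lem:tr-functors-compatible}. Its proof is a formal application of the adjoint-square principle \autoref{prop:square-proposition}: once the right adjoints have been matched up to natural isomorphism (as in the previous paragraph), and the vertical comparison functors between $\Fun(G/H \times \mathcal{E}G, \mathscr{C})^G$ and $\Fun(\mathcal{E}H, \mathscr{C})^H$ are equivalences of categories, the corresponding left adjoints $f_!$, $\Tr_H^K$, and $\gamma^g_!$ must agree up to natural isomorphism as well. No further geometric input is needed at this step.

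Finally, to descend from all of $\Mod(\twR{H})$ to $\mathcal{P}(\twR{H})$, I would invoke \autoref{cor:descent-to-proj-modules-subcat}: the hypothesis $|G|^{-1} \in R$ forces $|H|^{-1} \in R$ for every subgroup $H \subseteq G$, so the equivalence $\Fun(\mathcal{E}H, \mathcal{P}(R))^H \simeq \mathcal{P}(\twR{H})$ holds for every $H$. On the algebraic side, \autoref{prop:res-tr-functors} and \autoref{prop:our-functors-are-exact} ensure that $\Tr_K^H$, $\Res_K^H$, and $\gamma^g_!$ all preserve and are exact on finitely generated projective modules; on the homotopy-fixed-point side the analogous preservation is built into the Malkiewich--Merling construction. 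The main obstacle throughout is not in this corollary itself but in \autoref{lem:conjugation}, where the choice of coset representative $g_1 = x$ is what makes the two descriptions of conjugation line up; once that bookkeeping is absorbed, the present statement is a matter of gluing the lemmas together.
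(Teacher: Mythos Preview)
Your proposal is correct and follows essentially the same approach as the paper, which presents this corollary without explicit proof as a summary of the comparison lemmas in the preceding subsections. Your write-up is in fact more detailed than the paper's treatment, but the underlying strategy---assembling \autoref{lem:res-functors-compatible}, \autoref{lem:conjugation}, and \autoref{lem:tr-functors-compatible} via \autoref{prop:square-proposition}, then descending to projectives via \autoref{cor:descent-to-proj-modules-subcat}---is exactly what the paper intends.
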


After applying $K$-theory, we have that the equivalence of categories $\mathcal{P}(R)^{hH} \simeq \mathcal{P}(\twR{H})$ from \autoref{cor:descent-to-proj-modules-subcat} yields an isomorphism of spectra $K(\mathcal{P}(R)^{hH}) \simeq K(\mathcal{P}(\twR{H}))$. In particular, our definitions of restriction, transfer, and conjugation agree on these spectra, since they agreed up to natural isomorphism on the module categories. As restriction, transfer, and conjugation endowed $G/H \mapsto K(\mathcal{P}(R)^{hH})$ with the data of a $\mathcal{B}_G^\Wald$-module, by the compatibility we have proven, this implies that $G/H \mapsto K(\mathcal{P}(\twR{H}))$ is a $\mathcal{B}_G^\Wald$-module, and is moreover isomorphic to the $\mathcal{B}_G^\Wald$-module $G/H \mapsto K(\mathcal{P}(R)^{hH})$.

Finally, under the Quillen equivalence between module categories over $\mathcal{B}_G$ and $\mathcal{B}_G^\Wald$ (\autoref{cor:GB-and-GBWald-equiv}), after possibly modifying by an equivalence, we have that $G/H \mapsto K(\mathcal{P}(\twR{H}))$ and $G/H \mapsto K(\mathcal{P}(R)^{hH})$ are equivalent as $\mathcal{B}_G$-modules. This proves \autoref{thm:equivalence-Mackey-functors}. 

\subsection{Mackey functors from spectral Mackey functors, and \autoref{cor:KGn-computes-pi-n-KGR}}\label{subsec:mf-from-smf}

From our previous work, we have obtained a spectral Mackey functor $\mathbf{K}_G(R)$. By taking homotopy groups, we have hinted at the ability to obtain Mackey functors at each level. We will now make this procedure explicit.

Given any spectrally enriched category $\mathcal{A}$, and any lax monoidal functor $F: \Sp \to \Ab$, we can define a new category $F_\bullet \mathcal{A}$, which is a pre-additive category on the same objects. The homs in $F_\bullet \mathcal{A}$ are obtained as follows: for any $a,a'\in \mathcal{A}$,we define
\begin{align*}
    \Hom_{F_\bullet \mathcal{A}}(a,a') := F_\bullet \Hom_{\mathcal{A}}(a,a').
\end{align*}
The assumption that $F$ is lax monoidal implies that the resulting category has a well-defined composition enriched over $\Ab$. For a more general version of this statement, see \cite[Proposition~2.11]{BO15}.

\begin{example} As $\pi_0$ is lax monoidal, we may apply $\left( \pi_0 \right)_\bullet$ to the spectral Burnside category in order to recover the ordinary Burnside category
\[ \left( \pi_0 \right)_\bullet \mathcal{B}_G = B_G.\]
\end{example}

Given any spectrally enriched functor $\mathcal{A} \to \Sp$, by applying $\left( \pi_0 \right)_\bullet$, we obtain an additive functor $\left( \pi_0 \right)_\bullet \mathcal{A} \to \Ho(\Sp)$, valued in the homotopy category of spectra. Post-composing with any stable homotopy group functor $\pi_n$ produces a module over $\left( \pi_0 \right)_\bullet \mathcal{A}$.

In particular, if $\Phi: \mathcal{B}_G \to \Sp$ is any spectral Mackey functor, and $n\ge 0$, then we obtain the $n$th \textit{homotopy Mackey functor}, denoted $\und{\pi}_n \Phi$, via the following composite
\[ B_G \cong \left( \pi_0 \right)_\bullet \mathcal{B}_G \xto{\left( \pi_0 \right)_\bullet \Phi} \Ho(\Sp) \xto{\pi_n} \Ab.\]
This is due to \cite[Proposition~7.6]{BO15}. As an immediate example, we observe that $\underline{K}_n^G(R) = \und{\pi}_n \mathbf{K}_G(R)$, yielding \autoref{cor:KGn-computes-pi-n-KGR}.

\section{Families of Mackey functors}\label{sec:examples}

To wrap up, we provide a few particular examples and applications of our work constructing the Mackey functors $\underline{K}_n^G(R)$ for a $G$-ring $R$. In particular, we can generalize these results to provide a family of Mackey functors $E_n(R)$ for any suitable invariant $E$ of rings. We explain this in detail in the following section. 

\subsection{Mackey functors arising from homotopy invariants of rings}\label{subsec:MF-additive-invariants}

Once we had constructed the functors $\Tr_K^H$, $\Res_K^H$ and $\gamma^g_!$ between the exact categories of finitely generated projective modules over twisted group rings, and we had proved they satisfied axioms related to Mackey functors up to natural isomorphism, the construction of the Mackey functor $\underline{K}_n^G(R)$ was basically immediate. In particular we relied on two key aspects of the functor $K_n$:
\begin{enumerate}[(1)]
    \item If $f: R \to S$ exhibits $S$ as a finitely generated projective $R$-module, then restriction of scalars along $f$ descends to a functor $f^\ast: \mathcal{P}(S) \to \mathcal{P}(R)$, and extension of scalars is a functor $f_!: \mathcal{P}(R) \to \mathcal{P}(S)$ (even without these conditions). These in turn induce abelian group homomorphisms $f^\ast: K_n(S) \to K_n(R)$ and $f_! : K_n(R) \to K_n(S)$.
    \item If $F,G : \mathscr{C} \to \mathscr{D}$ are naturally isomorphic exact functors of exact categories, then $K_n(F) = K_n(G)$.
\end{enumerate}

Denote by $\ExCat$ the 2-category whose objects are exact categories, and whose morphisms are given by exact functors. Let $\Sp_{\ge 0}$ denote the category of connective spectra (i.e. infinite loop spaces). We see that condition (1) can be thought of as a consequence of the fact that we may view $K$-theory as a 1-functor $\ExCat \to \Sp_{\ge 0}$. Consequence (2) says that $K$-theory sends natural isomorphisms to homotopies.

\begin{terminology} We refer to any functor $E: \ExCat \to \Sp_{\ge 0}$ sending natural isomorphisms to homotopies as a \textit{homotopy invariant} of exact categories. Denoting by $E_n: \ExCat \to \Ab$ the composite functor $\pi_n\circ E$, we have that $E_n$ satisfies the conditions (1) and (2) listed above. Thus any such functor will provide a family of Mackey functors via an analogous proof to that of \autoref{thm:Mackey-func}. For any ring $R$, denote by $E(R) := E(\mathcal{P}(R))$ the space given by evaluating $E$ on the exact category of finitely generated projective $R$-modules, and denote by $E_n(R):= \pi_n E(R)$ its $n$th homotopy group.
\end{terminology}

\begin{corollary}\label{cor:func-from-excat-is-mackey-functor} Let $R$ be any $G$-ring, where $G$ is a finite group whose order is invertible over $R$. Let $E$ be any homotopy invariant on exact categories, and let $n\ge 0$ be arbitrary. Then we have that
\begin{align*}
    G/H \mapsto E_n \left( R_\theta[H] \right)
\end{align*}
is a Mackey functor, where restriction and transfer are induced by restriction and extension of scalars along exact categories of finitely generated projective modules over twisted group rings.
\end{corollary}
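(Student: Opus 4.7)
The plan is to observe that the proof of \autoref{thm:Mackey-func} factored cleanly through just two properties of $K_n$: that it is a functor on $\ExCat$ valued in abelian groups, and that it sends naturally isomorphic exact functors to equal homomorphisms. Since $E_n = \pi_n \circ E$ satisfies both by assumption---the first because $E : \ExCat \to \Sp_{\ge 0}$ is a functor and $\pi_n : \Sp_{\ge 0} \to \Ab$ is a functor, and the second because $E$ sends natural isomorphisms to homotopies while $\pi_n$ collapses homotopies to equalities---the same argument applies verbatim with $K_n$ replaced by $E_n$.

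First I would recall that the ring homomorphisms $\rho_H^K$ and $\gamma^g$ of \autoref{sec:Mackey-functor-tw-group-rings} induce exact functors $\Tr_K^H$, $\Res_K^H$, and $\gamma_!^g$ between the exact categories $\mathcal{P}(\twR{H})$, by \autoref{prop:res-tr-functors} and \autoref{prop:our-functors-are-exact}. The finiteness of $G$ is what ensures, via \autoref{prop:pi-gives-free-module}, that $\twR{K}$ is free over $\twR{H}$, which in turn guarantees that restriction of scalars preserves finitely generated projective modules. Applying $E$ to these exact functors yields morphisms of connective spectra, and post-composing with $\pi_n$ produces abelian group homomorphisms $E_n(\Tr_K^H)$, $E_n(\Res_K^H)$, and $E_n(\gamma_!^g)$ that will serve as the transfer, restriction, and conjugation data for the putative Mackey functor.

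Next I would invoke the natural isomorphisms of exact functors already established in \autoref{lem:webb-mackey-functor-definition} (for axioms MF0--MF5) and \autoref{cor:MF6} (for the Mackey decomposition formula MF6). Because $E$ sends natural isomorphisms to homotopies and $\pi_n$ sends homotopies to equalities, applying $E_n$ converts each such natural isomorphism into a strict equality of group homomorphisms. These equalities are precisely the axioms of \autoref{def:Mackey-functor}, so the assignment $G/H \mapsto E_n(\twR{H})$ with the indicated transfer, restriction, and conjugation maps assembles into a Mackey functor.

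The expected obstacle is essentially nonexistent, since the proof of \autoref{thm:Mackey-func} was deliberately structured to defer any use of $K_n$ until the final step---a modularity explicitly flagged in the remark immediately preceding \autoref{rmk:pure-elements}. In effect, nothing substantive needs to be reproved; one merely verifies by inspection that no property of $K_n$ beyond functoriality on $\ExCat$ and homotopy-invariance of natural isomorphisms was ever used, and that the hypotheses on $R$ and $G$ enter only through the underlying module-theoretic constructions and not through $E$ itself.
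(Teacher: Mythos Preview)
Your proposal is correct and follows exactly the approach the paper takes: the paper does not give a standalone proof of this corollary but simply notes, in the terminology paragraph preceding it, that any homotopy invariant $E$ satisfies conditions (1) and (2) and hence ``will provide a family of Mackey functors via an analogous proof to that of \autoref{thm:Mackey-func}.'' Your write-up is in fact more detailed than the paper's own justification, but the underlying idea---that the proof of \autoref{thm:Mackey-func} was structured so that $K_n$ enters only at the final step through functoriality and the collapse of natural isomorphisms to equalities---is precisely what the paper intends.
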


\begin{example} We may let $E$ be any one of $\HH, \THH,\TC,\TP,\TR$ in Corollary~\ref{cor:func-from-excat-is-mackey-functor}, yielding many new Mackey functors such as $\THH_n(R_\theta[H])$, the topological Hochschild homology of twisted group rings. The structure of $\THH_n(R_\theta[H])$ was explored in the thesis of Daniel Vera \cite{Vera-thesis}, although this Mackey functor structure was not known. 
\end{example}

The structure of the Mackey functors associated to these homotopy invariants, as well as their relation to trace maps, will be explored in a later paper.

\subsection{The Mackey functors of $K$-theory of fixed subrings of Galois extensions}\label{subsec:MF-Galois}
Given a finite Galois extension of fields $G = \Gal(L/k)$, we may consider $L$ as a $G$-ring under the natural Galois group action. It is a classical example that the assignment $G/H \mapsto K_n(L^H)$ is a Mackey functor (see, for example \cite[53.10]{Thevenaz}). We can prove a new proof for any Galois extension of rings, relying on Galois descent, which follows as an immediate corollary of \autoref{thm:Mackey-func}.

Let $R \to S$ be a Galois extension of rings (see \cite[p.~396]{Auslander-Goldman} for the original definition) with Galois group $G = \Gal(S/R)$, and denote by $\theta$ the action of $G$ on $S$. We remark that an $S_\theta[G]$ module is an $S$-module equipped with a semilinear action of the Galois group. This is the same as an $S$-module equipped with \textit{Galois descent data}. We recall that modules with descent data are equivalent to modules over the fixed subring, explicitly we have an equivalence of categories $\Mod(S_\theta[H]) \simeq \Mod(S^H)$ for any subgroup $H \subseteq G$, and it is easy to see that this equivalence descends to finitely generated projective modules.\footnote{An equivalence of categories can easily be seen to preserve projective objects --- if a morphism admits left lifting with respect to epimorphisms in the target, then the functor exhibiting an equivalence preserves epimorphisms (since it is a left adjoint perhaps after promoting to an adjoint equivalence), so the image of this morphism admits left lifting with respect to all epimorphisms in the essential image, which is the entire target category by equivalence.}

\begin{proposition}\label{prop:k-theory-galois-rings} Let $R \to S$ be a Galois extension of rings, and assume that the order of the Galois group $G$ is invertible over $R$. Then for any $n\ge 0$ we have that
\begin{align*}
    G/H \mapsto K_n(S^H)
\end{align*}
is a Mackey functor, where restriction and transfer come from restriction and extension of scalars between fixed subrings.
\end{proposition}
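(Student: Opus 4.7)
The strategy is to bootstrap directly from \autoref{thm:Mackey-func} applied to the $G$-ring $S$. This produces a Mackey functor $\underline{K}_n^G(S)$ whose value at $G/H$ is $K_n(S_\theta[H])$, with transfer and restriction arising from extension and restriction of scalars along the ring homomorphisms $\rho_H^K: S_\theta[H] \to S_\theta[K]$ between twisted group rings. The hypothesis on $|G|$ being invertible in $R$, hence in $S$, lets us apply this theorem, and the conclusion there also provides the conjugation data.

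Next, I would invoke the Galois descent equivalence $\mathcal{P}(S_\theta[H]) \simeq \mathcal{P}(S^H)$ alluded to in the paragraph preceding the statement. Since $K$-theory is invariant under equivalence of exact categories, taking $K_n$ yields isomorphisms $K_n(S_\theta[H]) \cong K_n(S^H)$ for each subgroup $H$. Transporting the Mackey functor $\underline{K}_n^G(S)$ along these isomorphisms produces a candidate Mackey functor $G/H \mapsto K_n(S^H)$; the Mackey functor axioms hold automatically because isomorphism of Mackey functors is well-defined and the axioms transport along isomorphisms. The remaining content of the proposition is identifying the resulting transfer and restriction maps as restriction and extension of scalars along the inclusions $S^K \hookrightarrow S^H$ of fixed subrings for $H \subseteq K$.

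The main obstacle is this last identification, which I would settle by tracing the descent equivalence through the relevant functors. Starting with an $S^K$-module $N$, the descent equivalence sends $N$ to the $S_\theta[K]$-module $S \otimes_{S^K} N$, equipped with its natural semilinear $K$-action. Restricting the action from $K$ down to $H$ and then descending via the $H$-equivalence produces $(S \otimes_{S^K} N)^H \cong S^H \otimes_{S^K} N$, the last isomorphism being a standard consequence of Galois descent (using invertibility of $|H|$, or equivalently faithful flatness of $S/S^H$). Thus $\Res_K^H$ on the twisted group ring side corresponds to extension of scalars $S^H \otimes_{S^K} -$ on the fixed subring side. The transfer $\Tr_K^H$ is then identified with restriction of scalars along $S^K \hookrightarrow S^H$ by uniqueness of left adjoints: both are the left adjoint of the same restriction functor under the descent equivalence. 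The conjugation $\gamma^g_!$ similarly corresponds to extension along the ring isomorphism $\theta_g \colon S^H \xrightarrow{\sim} S^{{}^gH}$, since the descent equivalence intertwines the $G$-action on twisted group rings with conjugation of fixed subrings inside $S$.

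With these three identifications established at the level of exact categories of finitely generated projective modules, applying $K_n$ immediately yields that $G/H \mapsto K_n(S^H)$ carries a Mackey functor structure whose restriction and transfer are induced by extension and restriction of scalars between fixed subrings, as claimed.
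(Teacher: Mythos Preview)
Your approach is essentially the paper's: apply \autoref{thm:Mackey-func} to the $G$-ring $S$, then transport along the Galois descent equivalence $\mathcal{P}(S_\theta[H]) \simeq \mathcal{P}(S^H)$. The paper in fact gives no further argument beyond the preceding paragraph, so your write-up is more detailed than what appears there.

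There is one small gap worth flagging in your identification of transfer. You correctly argue that $\Res_K^H$ corresponds under descent to extension of scalars $\iota_! = S^H \otimes_{S^K} -$ along $\iota \colon S^K \hookrightarrow S^H$. You then invoke ``uniqueness of left adjoints'' to conclude that $\Tr_K^H$, being left adjoint to $\Res_K^H$, corresponds to $\iota^\ast$. But $\iota^\ast$ is the \emph{right} adjoint of $\iota_!$, not the left. For the argument to go through you need either that $\Tr_K^H$ is also right adjoint to $\Res_K^H$ (which holds since $S_\theta[K]$ is finite free over $S_\theta[H]$, so induction and coinduction coincide), or equivalently that $\iota_! \cong \iota_\ast$ on the fixed-subring side (self-duality of $S^H$ over $S^K$ via the trace form for the Galois extension $S^K \to S^H$). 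Either ambidexterity statement is standard under your hypotheses, and once noted your adjunction argument is valid. Alternatively, one can verify directly that $(S_\theta[K] \otimes_{S_\theta[H]} (S \otimes_{S^H} M))^K \cong M$ as $S^K$-modules by checking on $M = S^H$ and extending to summands.
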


\begin{corollary} Under the conditions of \autoref{prop:k-theory-galois-rings}, we have that
\begin{align*}
    G/H \mapsto \THH_n(S^H)
\end{align*}
is a Mackey functor for any $n\ge 0$ (and an analogous statement is true by replacing $\THH$ with any homotopy invariant $E$).
\end{corollary}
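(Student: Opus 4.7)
The plan is to combine \autoref{cor:func-from-excat-is-mackey-functor} with the Galois descent equivalence of categories $\mathcal{P}(S_\theta[H])\simeq \mathcal{P}(S^H)$ recalled in the paragraph immediately preceding \autoref{prop:k-theory-galois-rings}. The idea is that we already know how to build the Mackey functor on twisted group rings; Galois descent lets us rewrite its values in terms of fixed subrings.

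First, I would apply \autoref{cor:func-from-excat-is-mackey-functor} to the $G$-ring $S$, noting that the hypothesis $|G|^{-1}\in R\subseteq S$ is satisfied, to conclude that for any homotopy invariant $E$ and any $n\ge 0$, the assignment
\[
G/H \mapsto E_n(S_\theta[H])
\]
is a Mackey functor, with restriction and transfer coming from restriction and extension of scalars between the exact categories $\mathcal{P}(S_\theta[H])$ for varying $H$.

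Second, I would observe that any homotopy invariant $E : \ExCat\to \Sp_{\ge 0}$ sends equivalences of exact categories to weak equivalences of spectra. Indeed, if $F : \mathscr{C}\to\mathscr{D}$ is an exact equivalence with quasi-inverse $F' : \mathscr{D}\to\mathscr{C}$, then $FF'\cong\id_\mathscr{D}$ and $F'F\cong\id_\mathscr{C}$ as natural isomorphisms, and by definition of a homotopy invariant, $E(F)$ and $E(F')$ become homotopy inverses in $\Sp_{\ge 0}$. Applying this to the Galois descent equivalence $\mathcal{P}(S_\theta[H])\simeq \mathcal{P}(S^H)$ yields isomorphisms $E_n(S_\theta[H])\cong E_n(S^H)$ for every subgroup $H\subseteq G$ and every $n\ge 0$.

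Finally, I would transport the Mackey functor structure along these isomorphisms to obtain the desired Mackey functor $G/H\mapsto E_n(S^H)$. The main subtlety I anticipate is verifying that $E$ really does convert equivalences of exact categories into equivalences of spectra, but this is essentially immediate from the definition of a homotopy invariant as given in the preceding subsection. I do not expect to need to match restriction and transfer on the fixed-subring side with explicit scalar-extension functors between the $S^H$'s for the mere statement that this is a Mackey functor, though such an identification follows from naturality of the descent equivalence and could be recorded as a remark.
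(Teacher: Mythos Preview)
Your proposal is correct and matches the paper's intended argument: the corollary is stated without proof precisely because it follows from \autoref{cor:func-from-excat-is-mackey-functor} together with the Galois descent equivalence $\mathcal{P}(S_\theta[H])\simeq\mathcal{P}(S^H)$ recalled just before \autoref{prop:k-theory-galois-rings}, exactly as you outline. Your observation that a homotopy invariant sends exact equivalences to weak equivalences (since it sends natural isomorphisms to homotopies) is the only point requiring comment, and you handle it correctly.
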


\subsection{The Mackey functor of $K$-theory of endomorphism rings}\label{subsec:MF-endo-rings}

Suppose that $\theta: G \to \Aut_\Ring(R)$ is a group action on a ring $R$. Then let $R^G \subseteq R$ denote the subring of $G$-fixed points under this action. We may define a ring homomorphism
\begin{equation}\label{eqn:gamma-map}
\begin{aligned}
    R_\theta[G] &\to \End_{R^G}(R) \\
    rg &\mapsto \left( t \mapsto r\theta_g(t) \right).
\end{aligned}
\end{equation}

In general, we shouldn't expect this map to be injective or surjective. However by a classical theorem of Auslander, we can state a sufficient condition for this ring homomorphism to be an isomorphism. In the following theorem we retain our ongoing assumption that $G$ is finite and that $|G|^{-1}\in R$

\begin{theorem}\label{thm:auslander} \cite[Proposition~3.4]{Auslander} If $R$ is a normal domain, and $R$ is unramified in codimension one over $R^G$, then the ring homomorphism in \autoref{eqn:gamma-map} is an isomorphism.
\end{theorem}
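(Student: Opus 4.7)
The plan is to verify that the natural ring homomorphism $\varphi: R_\theta[G] \to \End_{R^G}(R)$ from \autoref{eqn:gamma-map} is an isomorphism by a standard codimension-one reduction, exploiting normality of $R$ and the unramifiedness hypothesis to reduce to an étale Galois situation.

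First I would establish finiteness and reflexivity on both sides as $R^G$-modules. Since $G$ is finite and acts on the domain $R$ with $R$ integral over $R^G$, one gets $R$ finitely generated over $R^G$ and $\Frac(R)/\Frac(R^G)$ a Galois extension of degree $|G|$. This gives that $R_\theta[G] \cong R^{|G|}$ as $R^G$-modules, and $\End_{R^G}(R)$ is finitely generated over $R^G$. Furthermore $R^G$ is itself a normal domain (the invariants of a normal domain under a finite group are normal), so finitely generated reflexive modules over $R^G$ are determined by their localizations at height-one primes. I would check that both $R_\theta[G]$ and $\End_{R^G}(R)$ are reflexive over $R^G$: the former because $R$ is (being a torsionless module over a normal ring that agrees with its double dual away from codimension two), the latter as $\End$ of a reflexive module over a normal domain.

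Next I would reduce to checking that $\varphi$ is an isomorphism after localization at each height-one prime $\mathfrak{p}$ of $R^G$. At such a prime, $R^G_\mathfrak{p}$ is a discrete valuation ring and $R_\mathfrak{p} := R \otimes_{R^G} R^G_\mathfrak{p}$ is a finite, torsion-free, hence flat extension. The unramifiedness-in-codimension-one hypothesis then upgrades flat plus unramified to étale, so $R^G_\mathfrak{p} \to R_\mathfrak{p}$ is a finite étale $G$-Galois extension of rings.

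The core computational step is then to verify $\varphi$ is an isomorphism in two remaining situations that cover everything: the generic point, and the étale Galois local situation. Over the generic point, letting $L = \Frac(R)$ and $K = \Frac(R^G)$, the classical Galois identity $L \otimes_K L \cong \prod_{g \in G} L$ translates directly into $L_\theta[G] \cong \End_K(L)$, which is exactly $\varphi$ after inverting $R^G \setminus \{0\}$. The same argument applies verbatim after localizing at a height-one prime, since an étale $G$-Galois extension of rings satisfies the Galois isomorphism $S_\theta[G] \cong \End_{S^G}(S)$ essentially by definition (this is one of the equivalent characterizations of a $G$-Galois extension in the sense of Auslander--Goldman). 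Combining these local isomorphisms with reflexivity yields that $\varphi$ itself is an isomorphism.

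The main obstacle I anticipate is the reflexivity bookkeeping: one has to argue both that the source and target are reflexive $R^G$-modules and that $\varphi$ respects the double-dual identification, so that a map of reflexive modules which is an isomorphism in codimension one is an isomorphism everywhere. The generic-point argument is essentially pure Galois theory and the height-one case is the étale descent statement, so it is the gluing via reflexivity rather than the local statements themselves that requires the most care.
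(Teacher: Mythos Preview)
The paper does not actually prove this theorem: it is stated as a citation to \cite[Proposition~3.4]{Auslander}, and the reader is referred to \cite[Chapter~5.2]{Leuschke} for a detailed proof. So there is no in-paper argument to compare against.

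That said, your outline is essentially the standard proof one finds in those references: normality of $R^G$ gives that reflexive modules are determined in codimension one, both $R_\theta[G]$ and $\End_{R^G}(R)$ are reflexive over $R^G$, and the unramified-in-codimension-one hypothesis reduces the height-one localizations to the \'etale $G$-Galois case where the map is an isomorphism by the Auslander--Goldman characterization of Galois extensions. Your identification of the reflexivity bookkeeping as the point requiring care is accurate; in particular one should be slightly careful that $R$ is reflexive as an $R^G$-module (this uses that $R$ is normal and integral over the normal domain $R^G$), from which reflexivity of $\End_{R^G}(R)$ follows. The generic-point and height-one computations are exactly as you describe.
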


For a detailed proof of this statement, we refer the reader to \cite[Chapter~5.2]{Leuschke}. Via our result in \autoref{thm:Mackey-func}, we can then prove that the algebraic $K$-groups of such endomorphism rings admit the structure of a Mackey functor. In particular the restrictions and transfers would arise from passing through the isomorphism with twisted group rings. We can actually say a bit more, as we can show that there is a natural choice of restriction and transfer coming intrinsically from endomorphism rings.

Suppose that $G$ and $R$ satisfy the conditions of \autoref{thm:auslander}. Let $H \subseteq K$ be a subgroup. Then we have that $R^K \subseteq R^H$. In particular, any endomorphism of $R$ which is fixed over $R^H$ is also fixed over $R^K$. Thus there is a natural forgetful ring homomorphism $\End_{R^H}(R) \to \End_{R^K}(R)$ fitting into the diagram
\[ \begin{tikzcd}
    R_\theta[H]\rar\dar & \End_{R^H}(R)\dar\\
    R_\theta[K]\rar & \End_{R^K}(R).
\end{tikzcd} \]
Similarly if $H \subseteq G$ is a subgroup and $g\in G$ any element, there is a natural ring homomorphism of endomorphism rings $\End_{R^H}(R) \to \End_{R^{{}^{g}H}}(R)$, given by sending $\phi \mapsto \theta_g \circ \phi \circ \theta_{g^{-1}}$. We can easily see this fits into a diagram
\[ \begin{tikzcd}
    R_\theta[H]\rar\dar["\gamma^g" left] & \End_{R^H}(R)\dar\\
    R_\theta[{}^{g}H]\rar & \End_{R^{{}^{g}H}}(R).
\end{tikzcd} \]

\begin{proposition}\label{prop:Mackey-functor-unramified-ring-ext} Let $G$ be a finite group acting on a normal domain $R$, so that $|G|^{-1}\in R$ and $R$ is unramified of codimension one over $R^G$. Then for any $n\ge 0$, we have that
\begin{align*}
    G/H \mapsto K_n\left( \End_{R^H}(R)\right)
\end{align*}
is a Mackey functor, where
\begin{itemize}
    \item transfer $\Tr_H^K$ for a subgroup $H \subseteq K$ is induced by extension of scalars along the inclusions $\End_{R^H}(R) \to \End_{R^K}(R)$
    \item restriction $\Res_H^K$ is induced by restriction of sclars along $\End_{R^H}(R) \to \End_{R^K}(R)$
    \item conjugation is induced by extension of scalars along  the ring homomorphism $\End_{R^H}(R) \to \End_{R^{{}^{g}H}}(R)$ sending $\phi \mapsto \theta_g\circ \phi\circ \theta_{g^{-1}}$.
\end{itemize}
\end{proposition}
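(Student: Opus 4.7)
The plan is to transport the Mackey functor structure of $\underline{K}_n^G(R)$ from \autoref{thm:Mackey-func} through a family of ring isomorphisms supplied by Auslander's theorem applied to each subgroup. First I would verify that \autoref{thm:auslander} applies not just to $G$ but to every subgroup $H\subseteq G$: $R$ remains a normal domain, we have $|H|^{-1}\in R$ since $|H|$ divides $|G|$, and unramifiedness in codimension one of $R/R^G$ implies unramifiedness in codimension one of $R/R^H$ because ramification can only decrease upon enlarging the base ring from $R^G$ to $R^H\supseteq R^G$. This yields for each $H\subseteq G$ a ring isomorphism
\[ \phi_H : R_\theta[H] \xto{\sim} \End_{R^H}(R), \qquad rh \mapsto \bigl(t\mapsto r\,\theta_h(t)\bigr). \]

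Next, I would verify that the two squares of rings displayed in the excerpt just before the proposition commute. For the inclusion $H\subseteq K$, one computes that both composites $R_\theta[H]\to \End_{R^K}(R)$ send $rh$ to the endomorphism $t\mapsto r\,\theta_h(t)$; here the key observation is that $R^K\subseteq R^H$ forces the containment $\End_{R^H}(R)\subseteq \End_{R^K}(R)$ (being $R^H$-linear is stronger than being $R^K$-linear), making the ``forgetful'' map on the right vertical of the first square genuinely well-defined. For the conjugation square, a direct calculation shows that both $\phi_{{}^gH}\circ \gamma^g$ and the composite with $\psi\mapsto \theta_g\circ\psi\circ\theta_{g^{-1}}$ send $rh$ to the endomorphism $t\mapsto \theta_g(r)\,\theta_{ghg^{-1}}(t)$.

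Once these squares commute, extension and restriction of scalars along the vertical maps (the ring isomorphisms $\phi_H$) intertwine the exact functors $\Tr_K^H$, $\Res_K^H$, $\gamma^g_!$ with the extension and restriction functors along the natural endomorphism ring maps described in the proposition. Applying $K_n$, the abelian group isomorphisms $K_n(\phi_H)$ identify the Mackey functor $\underline{K}_n^G(R)$ of \autoref{thm:Mackey-func} with the proposed assignment $G/H\mapsto K_n(\End_{R^H}(R))$, equipped with the stated restriction, transfer, and conjugation maps. Since Mackey functor axioms are preserved under such an isomorphism, the proposition follows.

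The main obstacle I expect is the descent of the unramified-in-codimension-one hypothesis from $R/R^G$ to each $R/R^H$, which is the only nontrivial input beyond bookkeeping. Everything else is diagram-chasing at the level of ring maps plus the naturality of $K_n$ applied to ring isomorphisms.
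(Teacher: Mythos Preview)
Your approach is essentially the same as the paper's: the proposition is stated without an explicit proof environment, and the argument is precisely the discussion immediately preceding it, namely the two commutative squares showing that the isomorphisms $R_\theta[H]\xto{\sim}\End_{R^H}(R)$ from \autoref{thm:auslander} intertwine $\rho_H^K$ with the forgetful inclusion $\End_{R^H}(R)\to\End_{R^K}(R)$ and intertwine $\gamma^g$ with $\phi\mapsto\theta_g\circ\phi\circ\theta_{g^{-1}}$. You are in fact more careful than the paper on one point: the paper tacitly assumes that \autoref{thm:auslander} applies to every subgroup $H\subseteq G$, whereas you correctly flag that the hypothesis is stated only for $R/R^G$ and that one must check unramifiedness in codimension one descends to $R/R^H$ for each $H$.
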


\begin{example} Under the conditions of \autoref{prop:Mackey-functor-unramified-ring-ext}, we have that
\begin{align*}
    G/H \mapsto \THH_n(\End_{R^H}(R))
\end{align*}
is a Mackey functor (where we may also replace $\THH$ by any homotopy invariant).
\end{example}

\appendix

\section{Bimodule structures}\label{sec:bimodule}

Let $F: \Mod(R)\to \Mod(S)$ denote a right exact coproduct-preserving functor between left module categories. Then $F(R)$ inherits a right $R$-module structure, as described in \cite{Watts}. Let $m\in M$, and consider the multiplication morphism
\begin{align*}
    \omega_m : R &\to M \\
    r &\mapsto rm.
\end{align*}
Applying $F$, this gives a map $F\omega_m : FR \to FM$ for all $m\in M$. Letting $m$ vary, we have a morphism
\begin{align*}
    FR \times M &\to FM \\
    (x,m) &\mapsto F\omega_m(x).
\end{align*}
Letting $M=R$, we have a map of the form
\begin{align*}
    FR \times R &\to FR \\
    (x,r) &\mapsto F\omega_r(x).
\end{align*}
We may verify this defines a right $R$-module structure on $FR$.

This holds in general for right exact functors preserving coproducts. In our case, we will be interested in building up an understanding of these structures for the case where $F$ is extension or restriction of scalars along the ring maps $\rho_H^K$ and $\gamma^x$, with the ultimate goal of comparing the right $\twR{H}$-module structures on $ \bigoplus_{i=1}^n \Tr_{J\cap {}^{x} K}^J \gamma^x_! \Res_{J^x \cap K}^K \left( \twR{H} \right)$ and $\Res_J^H \Tr_K^H \twR{H}$.

\begin{proposition} Let $\omega_r : R \to R$ be right multiplication by $r$ viewed as a left $R$-module homomorphism, and let $f: R \to S$. Then $f_! \omega_r$ is of the form
\begin{align*}
    f_! \omega_r : S \otimes_R R &\to S \otimes_R R \\
    (s' \otimes r') &\mapsto (s' \otimes r')(1 \otimes r) = s \otimes r'r = s f(r'r).
\end{align*}
So we have that $f_!\omega_r = \omega_{f(r)}$ is right multiplication by $f(r)$.
\end{proposition}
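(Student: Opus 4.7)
The plan is to unpack the definition of extension of scalars as tensor product and then recognize the canonical identification $S \otimes_R R \cong S$ of left $S$-modules. Recall that $f_!$ sends a left $R$-module $M$ to $S \otimes_R M$, where $S$ carries the right $R$-module structure induced by $f$, and sends a morphism $\phi : M \to N$ to the map $\id_S \otimes \phi$. Applied to $\omega_r$, this immediately yields
\[
f_! \omega_r : S \otimes_R R \to S \otimes_R R, \qquad s' \otimes r' \mapsto s' \otimes r'r.
\]

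Next, I would invoke the standard isomorphism $\mu : S \otimes_R R \xrightarrow{\sim} S$ of left $S$-modules given by $s' \otimes r' \mapsto s' f(r')$. Under this identification, the displayed map above transports to $s' f(r') \mapsto s' f(r'r) = s' f(r') f(r)$, which is precisely right multiplication by $f(r)$ on $S$, i.e.\ $\omega_{f(r)}$. Thus $f_!\omega_r$ and $\omega_{f(r)}$ agree after identifying $f_!R = S \otimes_R R$ with $S$.

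There is essentially no obstacle here: the proof is a one-line check unwinding the definition of extension of scalars, combined with the canonical unitor isomorphism for the tensor product. The only point worth noting carefully is that $\omega_r$ must be interpreted as a left $R$-module homomorphism (which requires right multiplication by $r$, not left), so that $f_!$ can be applied at all; the conclusion then reads off the formula exactly as in the statement. This is the computation needed to feed into \autoref{sec:bimodule} when comparing the right $\twR{K}$-actions on the two bimodules appearing in \autoref{prop:bimodule-homomorphism}.
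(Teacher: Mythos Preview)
Your proposal is correct and matches the paper's approach exactly: the paper gives no separate proof for this proposition, instead displaying the tensor computation $(s' \otimes r') \mapsto s' \otimes r'r = s' f(r'r)$ directly in the statement, which is precisely the unwinding of $\id_S \otimes \omega_r$ followed by the unitor $S \otimes_R R \cong S$ that you wrote out.
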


\begin{proposition} Let $\omega_r : R \to R$ be right multiplication by $r$ viewed as a left $R$-module homomorphism, and let $g: T \to R$. Then $g^\ast \omega_r$ is of the form
\begin{align*}
    g^\ast\omega_r: g^\ast R &\to g^\ast R \\
    r' &\mapsto r'\cdot r,
\end{align*}
where $-\cdot r$ is viewed as a $T$-module homomorphism.
\end{proposition}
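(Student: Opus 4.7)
The plan is to unwind the definitions of restriction of scalars and of the morphism $\omega_r$, and verify by direct computation that the resulting map is $T$-linear. Recall that the restriction of scalars functor $g^\ast : \Mod(R) \to \Mod(T)$ associated to a ring homomorphism $g: T \to R$ acts by preserving the underlying abelian groups and underlying set-theoretic morphisms, while re-equipping each $R$-module $M$ with the $T$-action $t \cdot m := g(t) m$. In particular, if $f : M \to N$ is an $R$-module homomorphism, then $g^\ast f$ has the same underlying function as $f$, and the content of ``applying $g^\ast$'' is simply verifying that this same function is $T$-linear for the new module structures.

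Applied to the situation at hand, the first step is to observe that $g^\ast R$ is $R$ as an abelian group, equipped with the $T$-action $t \cdot r' = g(t) r'$, and that $g^\ast \omega_r$ is the same function $r' \mapsto r' r$ as $\omega_r$. This immediately gives the formula in the statement. The second and only substantive step is to verify $T$-linearity. For any $t \in T$ and $r' \in R$, associativity of multiplication in $R$ yields
\[
    g^\ast\omega_r(t \cdot r') = g^\ast\omega_r(g(t) r') = (g(t) r') r = g(t)(r' r) = t \cdot g^\ast\omega_r(r'),
\]
with additivity being inherited from $R$-linearity of $\omega_r$.

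There is no real obstacle here; the proposition is essentially a definitional unpacking. The only point that requires attention is to keep straight that although the underlying function is unchanged, its source and target have been relabeled as $T$-modules via $g$, so ``right multiplication by $r$'' must now be interpreted as a $T$-module endomorphism of $g^\ast R$ rather than an $R$-module endomorphism of $R$. This bookkeeping is precisely what is needed for the comparison of right actions in the proof of \autoref{prop:bimodule-homomorphism}, where the preceding proposition (handling extension of scalars) and this one together allow one to track how right multiplication on $\twR{K}$ transports through iterated applications of $\Tr$, $\Res$, and $\gamma^x_!$.
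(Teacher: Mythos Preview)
Your proof is correct. The paper actually states this proposition without proof, treating it as an immediate definitional observation (it follows the proposition only with the summary remark that ``$f_! \omega_r = \omega_{f(r)}$, and $f^\ast \omega_r = \omega_r$, where one needs to be careful about the ambient context''). Your write-up supplies exactly the unpacking the paper elides: restriction of scalars leaves underlying functions unchanged, and the only thing to check is $T$-linearity, which follows from associativity in $R$.
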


This leads us to the very rough rule that $f_! \omega_r = \omega_{f(r)}$, and $f^\ast \omega_r = \omega_r$, where one needs to be careful about the ambient context.

\begin{proposition} Let $\rho_H^K: \twR{H} \to \twR{K}$, and let $H\backslash K = Hy_1, \ldots, Hy_n$ be a choice of right coset representatives. Let $\gamma^x: \twR{H} \to \twR{{}^{x}H}$.
\begin{enumerate}
    \item Consider extension of scalars $(\rho_H^K)_!: \Mod(\twR{H}) \to \Mod(\twR{K})$. We have that the right $\twR{H}$-module structure on $\left( \rho_H^K \right)_! (\twR{H})$ is given by right multiplication on $\twR{K}$.

    \item Consider restriction of scalars $\left( \rho_H^K \right)^\ast : \Mod(\twR{K}) \to \Mod(\twR{H})$. Then the right $\twR{K}$-module structure on $\left( \rho_H^K \right)^\ast (\twR{H})$ is given by right multiplication by $\twR{K}$ viewed as an $\twR{H}$-module isomorphism. Explicitly, it is a map of the form
    \begin{align*}
        \oplus_i \twR{H}y_i \times \twR{K} &\to \oplus_i \twR{H} y_i \\
        \left( \sum_i r_i h_i y_i , rk \right) &\mapsto \left( \sum_i r_i\phi_{h_i}(r) h_i k \right),
    \end{align*}
    where we express $\sum_i r_i\phi_{h_i}(r) h_i k = \sum_i r_i' h_i' y_i$ as a new sum in the $y_i$'s.

    \item Consider extension of scalars $(\gamma^x)_! : \twR{H} \to \twR{{}^{x}H}$. Then the right module structure on $\gamma^x_!(\twR{H})$ is given by right multiplication by $\twR{H}$ under the map $\gamma^x$. Explicitly it is
    \begin{align*}
         \twR{{}^{x}H} \times  \twR{H} &\to \twR{{}^{x}H} \\
         (ry, rh) &\mapsto ry \cdot \gamma^x(rh).
    \end{align*}
\end{enumerate}
\end{proposition}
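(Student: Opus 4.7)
The plan is to apply the two general formulas just established in the appendix—namely $f_!\omega_r = \omega_{f(r)}$ for extension of scalars and $g^\ast\omega_r = \omega_r$ for restriction of scalars—to each of the three cases, using the explicit definitions of $\rho_H^K$ and $\gamma^x$. In all three situations the right module structure under consideration is, by construction, the one generated by the family of multiplication maps $\omega_r : R \to R$, so it suffices to track what these maps become under the relevant functor.

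For part (1), I apply the first proposition with $R = \twR{H}$, $S = \twR{K}$, and $f = \rho_H^K$. Since $\rho_H^K$ sends a pure element $rh \in \twR{H}$ to the ``same'' pure element $rh$ viewed inside $\twR{K}$, the right action of $rh \in \twR{H}$ on $(\rho_H^K)_!(\twR{H}) \cong \twR{K}$ is right multiplication by $\rho_H^K(rh) = rh$ in $\twR{K}$. This is exactly ordinary right multiplication on $\twR{K}$, restricted to the image of $\twR{H}$. Part (3) is entirely analogous: applying the first proposition with $f = \gamma^x$, the right action of $rh$ on $\gamma^x_!(\twR{H}) \cong \twR{{}^{x}H}$ is right multiplication by $\gamma^x(rh) = \theta_x(r)\, xhx^{-1}$, which is the formula stated.

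Part (2) is the one that requires real calculation. I apply the second proposition with $g = \rho_H^K$ to conclude that the right $\twR{K}$-action on $(\rho_H^K)^\ast(\twR{K})$ is honest right multiplication in $\twR{K}$, but now viewed as a family of left $\twR{H}$-module endomorphisms. To obtain the explicit formula, I then transport this action across the left $\twR{H}$-module isomorphism $\twR{K} \cong \bigoplus_i \twR{H}\, y_i$ provided by \autoref{cor:right-coset-reps-give-basis}: for $\sum_i r_i h_i y_i$ in the decomposition and $rk \in \twR{K}$, I compute $\left(\sum_i r_i h_i y_i\right)\cdot (rk)$ using the twisted multiplication rule, then expand $y_i k$ in a unique right coset $Hy_j$ to recover the result as a sum in the $y_j$-basis.

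The main obstacle is purely bookkeeping in part (2): carefully handling the twisted action (\emph{i.e.} the $\theta_{h_i y_i}$ appearing when the pure element $h_i y_i$ is pushed past $r$) and then re-indexing by the coset in which $y_i k$ lies. The two earlier propositions do all of the conceptual work, so once the formulas are unpacked the verification is mechanical, and the fact that the resulting module structure agrees with the one predicted by $g^\ast\omega_r = \omega_r$ follows from the associativity of multiplication in $\twR{K}$.
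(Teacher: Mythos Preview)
Your proposal is correct and matches the paper's approach exactly: the paper states this proposition without an explicit proof, leaving it as an immediate application of the two preceding propositions (that $f_!\omega_r = \omega_{f(r)}$ and $g^\ast\omega_r = \omega_r$), which is precisely what you do. Your observation that the twist in part~(2) should involve $\theta_{h_i y_i}$ rather than $\theta_{h_i}$ is also correct---the displayed formula in the paper appears to have dropped the $y_i$'s, and your version is the right one.
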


\begin{corollary} Let $J,K \subseteq H$, and let $J\backslash H = \cup_{i=1}^r Jy_i$, and let $K\backslash H = \cup Kz_j$. Then the right $\twR{K}$-module structure on $\Res_J^H \Tr_K^H \twR{K} \cong \oplus_{i=1}^r \twR{J} 1_R y_i$ is given by viewing $rk \in \twR{H}$, and writing $rk = rjy_\ell$ for some $y_\ell$ and $j\in J$, and then considering the map
\begin{align*}
    \oplus_{i=1}^r \twR{J} 1_R y_i \times \twR{K} &\to \oplus_{i=1}^r \twR{J} 1_R y_i \\
    \left( \sum r'j' y_i , rjy_\ell\right) &\mapsto \left( \sum r' \phi_{j'}(r) y_i j y_\ell \right).
\end{align*}
\end{corollary}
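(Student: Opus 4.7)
The plan is to combine the two preceding propositions about how $f_!$ and $f^\ast$ interact with the right multiplication maps $\omega_r$, applied in turn to $\Tr_K^H = (\rho_K^H)_!$ and $\Res_J^H = (\rho_J^H)^\ast$. Since we have already established that $f_!\omega_r = \omega_{f(r)}$ and $f^\ast \omega_r = \omega_r$, the whole argument reduces to a routine chase through the coset decomposition $H = \bigcup_i Jy_i$.

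First I would unwind the right $\twR{K}$-module structure on $\Tr_K^H(\twR{K}) = \twR{H}$: applying the proposition on $f_!$ to $f = \rho_K^H$ shows this structure is simply right multiplication on $\twR{H}$ by the image of $\twR{K}$ under the inclusion $\rho_K^H$. Next, applying the proposition on $f^\ast$ to $f = \rho_J^H$ shows that passing through $\Res_J^H$ does not alter the right $\twR{K}$-action; it only reinterprets the left module structure, turning $\twR{H}$ into a left $\twR{J}$-module. Thus on the underlying abelian group $\twR{H}$, the right $\twR{K}$-action is still right multiplication in $\twR{H}$ by elements of $\twR{K}$ considered via $\rho_K^H$.

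The final step is to transport this action along the left $\twR{J}$-module isomorphism $\twR{H} \cong \bigoplus_{i=1}^r \twR{J} y_i$ from \autoref{cor:right-coset-reps-give-basis}. Given an element $\sum_i r' j' y_i$ on the left and an element $rk \in \twR{K}$ on the right, I would write $k = jy_\ell$ using the decomposition $H = \bigcup_\ell Jy_\ell$ so that $rk$ is represented in $\twR{H}$ as $rjy_\ell$, and then compute the product $(\sum_i r'j' y_i)(rjy_\ell)$ in $\twR{H}$ using the multiplication rule from \autoref{def:tw-grp-rng}. Gathering terms and re-indexing via the coset decomposition produces the advertised formula.

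There is no real obstacle here; the work is entirely bookkeeping. The only care required is in tracing the twisted multiplication through the factor $y_i$, where one must commute $y_i$ past the scalar $r$ using $\theta_{y_i}$, and then recognize the resulting group element $y_i j y_\ell \in H$ as living in the span of the basis elements $\{y_1, \ldots, y_r\}$ over $\twR{J}$. Once the computation is organized so that the decomposition $k = jy_\ell$ is fixed at the start, the formula falls out directly, and the assignment is $\twR{K}$-bilinear by construction, confirming that we have indeed described the right $\twR{K}$-module structure on $\Res_J^H \Tr_K^H \twR{K}$.
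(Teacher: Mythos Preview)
Your proposal is correct and matches the paper's approach: the paper states this result as an immediate corollary of the preceding proposition (specifically parts (1) and (2), which give the right module structures under $(\rho_K^H)_!$ and $(\rho_J^H)^\ast$), with no separate proof given. Your unwinding through the coset decomposition is exactly the bookkeeping the paper leaves implicit, and your remark about commuting $y_i$ past $r$ via $\theta_{y_i}$ is the only point requiring care.
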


\begin{corollary} Let $J,K \subseteq H$, and let $x$ be a choice of double coset representative for $J\backslash H /K$. Let $J^x \cap K \backslash K =\cup_i (J^x \cap K)\beta_i$. Then the right module structure of $\twR{K}$ on 
\begin{align*}
    \Tr_{J\cap {}^{x} K}^J \gamma^x_! \Res_{J^x \cap K}^K \left( \twR{K} \right)
\end{align*}
is given by first writing $rk = rx^{-1}j x\beta_\ell$ for some $\beta_\ell$, and right multiplying. Then after $\gamma^x_!$, it is given by right multiplication by $\gamma^x(rx^{-1} j x \beta_\ell)$, which is $\phi_x(r) j x\beta_\ell x^{-1}$, then finally transferring, i.e. multiplying through by this as an $\twR{J}$-module homomorphism on the right.
\end{corollary}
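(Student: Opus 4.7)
The plan is to apply the three preceding propositions describing how extension and restriction of scalars transport right-multiplication morphisms $\omega_r$, in sequence over the three functors $\Res_{J^x\cap K}^K$, $\gamma^x_!$, and $\Tr_{J\cap {}^xK}^J$. The bimodule $\twR{K}$ begins equipped with its canonical right $\twR{K}$-action by right multiplication, and since each functor in the composition acts only on the left-module structure, the right action persists; the task reduces to re-expressing it through the identification of the target module at each stage.

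First I would apply $\Res_{J^x\cap K}^K$: by the rule $f^\ast\omega_r = \omega_r$, the right action is literally unchanged and remains right multiplication in $\twR{K}$, now viewed through the free decomposition $\twR{K}\cong \bigoplus_i \twR{J^x\cap K}\beta_i$ from \autoref{cor:right-coset-reps-give-basis}. To prepare for the next step I would rewrite a pure element $rk\in \twR{K}$ uniquely as $rk = r(x^{-1}jx)\beta_\ell$, where $x^{-1}jx\in J^x\cap K$ (equivalently $j\in J\cap {}^xK$) and $\beta_\ell$ is one of the fixed right coset representatives for $(J^x\cap K)\backslash K$, so that right multiplication by $rk$ factors as right multiplication by an element of $\twR{J^x\cap K}$ followed by a shift by $\beta_\ell$. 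Applying $\gamma^x_!$ next converts this via $f_!\omega_s=\omega_{f(s)}$ into right multiplication by $\gamma^x\bigl(r(x^{-1}jx)\beta_\ell\bigr) = \phi_x(r)\cdot jx\beta_\ell x^{-1}$, absorbing the shift through the basis transformation described in \autoref{ex:restriction-along-conj-basis}. Finally, $\Tr_{J\cap {}^xK}^J$ is extension of scalars along the inclusion $\rho_{J\cap {}^xK}^J$, so by the same rule it preserves the formal expression of the right multiplication, now reinterpreted as a morphism of $\twR{J}$-modules.

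The main obstacle is the bookkeeping: at each stage the module being acted on is a direct sum of shifted copies of a twisted group ring, and the right action of $rk$ simultaneously mixes the summands (indexed by $\beta_\ell$) and transforms coefficients via $\phi_x$ and conjugation by $x$. One must carefully verify that the composite action agrees with literal right multiplication by $\phi_x(r)\cdot jx\beta_\ell x^{-1}$, interpreted through the final direct-sum identification of the output as an $\twR{J}$-module. Once this matching is in place, the corollary is immediate from iterating the two propositions about the action of $f_!$ and $f^\ast$ on right-multiplication homomorphisms.
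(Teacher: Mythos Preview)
Your proposal is correct and follows exactly the approach the paper intends: the corollary is stated without proof precisely because it is obtained by iterating the rules $f_!\omega_r=\omega_{f(r)}$ and $f^\ast\omega_r=\omega_r$ from the preceding propositions through the three functors $\Res_{J^x\cap K}^K$, $\gamma^x_!$, and $\Tr_{J\cap{}^xK}^J$, together with the coset decomposition rewriting $rk=r(x^{-1}jx)\beta_\ell$. Your acknowledgement that the right action mixes the $\beta_i$-summands is the only subtlety, and it is exactly the bookkeeping the paper leaves implicit.
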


\begin{proof}[Proof of \autoref{prop:bimodule-homomorphism}]

Let $\sum r_i k_i \in \twR{K}$ be arbitrary, and let $\omega_{\sum r_i k_i}$ denote the right multiplication by this element, viewed as a left $\twR{K}$-module homomorphism. Then we see that, for a given $x$, we have
\begin{align*}
    \Tr_{J\cap {}^{x} K}^J \gamma^x_! \Res_{J^x \cap K}^K \omega_{\sum r_i k_i}  &= \omega_{\sum \phi_x(r_i) x k_i x^{-1}},
\end{align*}
by first viewing $\omega_{\sum r_i k_i}$ as an $\twR{J^x \cap K}$-module homomorphism under restriction, then extending to obtain $\omega_{\sum \phi_x(r_i) x k_i x^{-1}}$ as an $\twR{J\cap {}^{x} K}$-module homomorphism, and finally restricting to view $\omega_{\sum \phi_x(r_i) x k_i x^{-1}}$ as an $\twR{J}$-module homomorphism.

We see that
\begin{align*}
    \Res_J^H \Tr_K^H \omega_{\sum r_i k_i} = \omega_{\sum r_i k_i},
\end{align*}
by extending along $\rho_K^H$ and restricting along $\rho_J^H$.

 For the sake of notation, let
\begin{align*}
    P &:= \oplus \Tr_{J\cap {}^{x} K}^J \gamma^x_! \Res_{J^x \cap K}^K \\
    Q &:= \Res_J^H \Tr_K^H.
\end{align*}
Let $\epsilon : P(\twR{K}) \xto{\sim} Q(\twR{K})$ denote the isomorphism of $\twR{J}$-modules given in \autoref{cor:isom-base-rings}. Then it suffices to verify that $F\omega_{\sum r_i k_i} \epsilon = \epsilon G\omega_{\sum r_i k_i}$.

Recall that
\begin{align*}
    P(\twR{K}) &= \bigoplus_{i=1}^n \bigoplus_{\ell=1}^{r_i} \twR{J} x_i \beta_{i,j} x_i^{-1} \\
    Q(\twR{K}) &= \bigoplus_{i=1}^n \bigoplus_{\ell=1}^{r_i} \twR{J} x_i \beta_{i,j}.
\end{align*}
Let $r_\ell j_\ell x_\ell \beta_{\ell,a} x_\ell^{-1}$ be an arbitrary summand in $P(\twR{K})$. Then we consider the following (a priori noncommutative) diagram
\begin{equation}\label{eqn:bimodule-diagram}
\begin{aligned}
    \begin{tikzcd}[ampersand replacement=\&]
    {P(\twR{K})}\rar["F\omega_{\sum r_i k_i}" above]\dar["\epsilon" left] \& {P(\twR{K})}\dar["\epsilon" right]\\
    {Q(\twR{K})}\rar["G\omega_{\sum r_i k_i}" below] \& {Q(\twR{K})}.\\
\end{tikzcd}
\end{aligned}
\end{equation}
Tracing through where the element $r_\ell j_\ell x_\ell \beta_{\ell,a} x_\ell^{-1}$ maps, we see that
\[ \begin{tikzcd}[column sep=7em]
r_\ell j_\ell x_\ell \beta_{\ell,a} x_\ell^{-1}\rar[maps to,"{-\cdot \sum_i \phi_{x_\ell}(r_i) x_\ell k_i x_\ell^{-1}}"]\dar["-\cdot x_\ell" left] & \sum_i r_\ell \phi_{j_\ell x_\ell \beta_{\ell,a}}(r_i) j_\ell x_\ell \beta_{\ell,a} k_i x_\ell^{-1}\dar["-\cdot x_\ell" right]\\
    r_\ell j_\ell x_\ell \beta_{\ell,a}\rar["-\cdot \sum_i r_i k_i" below] & \sum_i r_\ell \phi_{j_\ell x_\ell \beta_{\ell,a}}(r_i) x_\ell \beta_{\ell,a} k_i.\\
\end{tikzcd} \]
Thus since $r_\ell j_\ell x_\ell \beta_{\ell,a} x_\ell^{-1}$ was arbitrary, we have that \autoref{eqn:bimodule-diagram} commutes. This implies that $\epsilon$ is a right $\twR{J}$-module homomorphism.
\end{proof}

\bibliographystyle{amsalpha}
\bibliography{citations.bib}{}
\newpage

\end{document}